\newtheorem*{observation*}{Observation}
\newcommand{\ie}{{i.e.}}
\newcommand{\alg}{\textsf{ALG}}
\newcommand{\opt}{\textsf{OPT}}
\newcommand{\conv}{{\rm conv}}
\newcommand{\interior}{{\rm Int}}
\newcommand{\dist}{{\rm dist}}
\newcommand{\area}{{\rm Area}}
\newcommand{\vol}{{\rm Vol}}
\newcommand{\NN}{\mathbb{N}} 
\newcommand{\ZZ}{\mathbb{Z}} 
\newcommand{\RR}{\mathbb{R}} 
\newcommand{\eps}{\varepsilon}
\def\O{\mathcal O}
\def\P{\mathcal P}
\def\S{\mathcal S}
\newcommand{\later}[1]{}
\newcommand{\old}[1]{}
\title{Finding Points in Convex Position in Density-Restricted Sets}
\titlerunning{Finding Points in Convex Position in Density-Restricted Sets}
\author{Adrian Dumitrescu}
{Algoresearch L.L.C., Milwaukee, WI, USA}
{ad.dumitrescu@gmail.com}
{0000-0002-1118-0321}
{}
\author{Csaba D. T\'oth}
       {Department of Mathematics, California State University Northridge, Los Angeles, CA;
         and Department of Computer Science, Tufts University, Medford, MA, USA}
{csaba.toth@csun.edu}
{0000-0002-8769-3190}
{}
\authorrunning{A.~Dumitrescu and Cs.~D.~T\'oth}
\keywords{Erd\H{o}s-Szekeres theorem,
  Horton set,
  Valtr grid,
  Conlon-Lim grid,
  density-restricted set,
  spread of a set,
  approximation algorithm,
  randomized algorithm}
\begin{document}

\maketitle

\begin{abstract}
  For a finite set $A\subset \RR^d$, let $\Delta(A)$ denote the spread of $A$, which is the ratio of
  the maximum pairwise distance to the minimum pairwise distance. For a positive integer $n$,
  let $\gamma_d(n)$ denote the largest integer such that any set $A$ of $n$ points in general position in $\RR^d$,
  satisfying $\Delta(A) \leq \alpha n^{1/d}$ for a fixed $\alpha>0$,
  contains at least $\gamma_d(n)$ points in convex position.
  About $30$ years ago, Valtr proved that $\gamma_2(n)=\Theta(n^{1/3})$. Since then
  no further results have been obtained in higher dimensions. Here we continue this line of research
  in three dimensions and prove that $ \gamma_3(n) =\Theta(n^{1/2})$.
   The lower bound implies the following approximation:
   Given any $n$-element point set $A\subset \RR^3$ in general position, satisfying $\Delta(A) \leq \alpha n^{1/3}$
   for a fixed $\alpha$, a $\Omega(n^{-1/6})$-factor approximation of the maximum-size convex subset of points
   can be computed by a randomized algorithm in $\O(n \log{n})$ expected time.

%
\end{abstract}

\section{Introduction} \label{sec:intro}

A set of points in the $d$-dimensional space $\RR^d$ is said to be:
(i)~in \emph{general position} if any $d+1$ or fewer points are \emph{affinely independent}; and
(ii)~in \emph{convex position} if none of the points lies in the convex hull of the other points.

In $1935$ Erd\H{o}s and Szekeres proved, as one of the first Ramsey-type results in combinatorial geometry,
that for every $n \in \NN$, a sufficiently large point set in the plane in general position contains
$n$ points in convex position~\cite{ES35}. The minimum cardinality of a point set that contains a subset of $n$ points
in convex position is known as the Erd\H{o}s--Szekeres number, denoted here by $f(n)$.
The resulting upper bound in their paper was $f(n) \leq \binom{2n-4}{n-2}+1=4^{n(1-o(1))}$.
In $1960$, the same authors showed a construction that implies a lower bound of $f(n) \geq 2^{n-2}+1$,
and conjectured that this lower bound is tight~\cite{ES60}.
The current best (asymptotic) upper bound, due to Suk~\cite{Suk17}, is $f(n) \leq 2^{n(1+o(1))}$.
In other words, every set of $n$ points in general position in the plane contains $(1-o(1))\log{n}$ points in convex position,
and this bound is tight up to lower-order terms.

Let $A$ be a set of $n$ points in general position in $\RR^d$.
Define
\[
\Delta(A)= \frac{\max\{\dist(a,b) : a,b \in A, a \neq b\}}{\min\{\dist(a,b) : a,b \in A, a \neq b\}}, \]
where $\dist(a,b)$ is the Euclidean distance between points $a$ and $b$.
The ratio $\Delta(A)$ is referred to as the \emph{aspect ratio} or the \emph{spread} of $A$;
see for instance~\cite{BLMN05,CL21}.
We assume without loss of generality that the minimum pairwise distance is $1$ and in this case
$\Delta(A)$ is the diameter of $A$.
A standard volume argument shows that if $A$ has $n$ points, then $\Delta(A) \geq c_d \, n^{1/d}$, where
$c_d>0$ is a constant depending only on $d$; for instance it is known~\cite{Va92}
that $c_2 \geq 2^{1/2} 3^{1/4} \pi^{-1/2} \approx 1.05$.
On the other hand, the section of the integer lattice $[n]^d$  shows that this bound is tight
up to the aforementioned constant.
A point set satisfying the condition $\Delta(A) =\O(n^{1/d})$, is called here \emph{density-restricted}
(or simply \emph{dense}, see for instance \cite{EVW97,KT20,Va94}).

In the seminal article of Erd\H{o}s and Szekeres~\cite{ES35}, the constructed point sets
with no large subsets in convex position have very large spread.
Similarly, in Horton's seminal article on point sets with no empty convex heptagons,
the constructed point sets also have very large spread
(Horton sets will be discussed in Section~\ref{sec:prelim}.)
Answering the emerging question of whether such results really require large spreads,
Valtr~\cite{Va92} showed the existence of arbitrarily large planar sets $A$ with $\Delta(A) \leq \alpha n^{1/2}$
that have no empty convex heptagons.
(Observe that whenever $\Delta(A)$ satisfies this condition, we have $\alpha \geq c_2$.)
This property can be achieved for example by a suitable, carefully crafted, small perturbation
of the lattice section $[n]^2$. Moreover, Valtr~\cite{Va92} obtained by probabilistic arguments the following result:

\begin{theorem} {\rm (Valtr~\cite{Va92})} \label{thm:valtr-lower}
  For every $\alpha>0$ there exists $\beta=\beta(\alpha)>0$ such that any
  set of $n$ points in general position in the plane satisfying $\Delta(A) \leq \alpha n^{1/2}$, contains a subset of
  $\beta n^{1/3}$ points in convex position.
\end{theorem}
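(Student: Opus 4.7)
The plan is to pinpoint a long tubular neighbourhood of a parabolic arc that contains $\Omega(n^{1/3})$ points of $A$, and then extract $\Omega(n^{1/3})$ of them in convex position by exploiting the arc's curvature. I will normalise $A$ so that its minimum pairwise distance equals $1$; then $A$ is contained in a square $[0,D]^2$ with $D=O(\sqrt{n})$, and the disk-packing bound implies that every planar region of area $O(1)$ contains $O(1)$ points of $A$. Throughout, set $R:=D$, $\delta:=n^{-1/6}$, and $w:=c\,n^{1/6}$, with the constant $c$ chosen below.

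First, I would run an averaging argument over the one-parameter family of parabolic tubes
$T_h:=\{(x,y)\in[0,D]^2:|y-x^2/(2R)-h|\le\delta\}$. Each point $p=(p_x,p_y)\in A$ lies in $T_h$ precisely when $h$ lies in an interval of length $2\delta$ centred at $h(p):=p_y-p_x^2/(2R)$, and $h(p)$ ranges over an interval of length $O(\sqrt{n})$. Double-counting therefore gives
\[
\int |A\cap T_h|\,dh \;=\; 2\delta\,n,
\qquad \text{average}\;|A\cap T_h| \;=\; \Omega(\delta n/D) \;=\; \Omega(n^{1/3}).
\]
Fix $h^\star$ achieving at least this average and let $B:=A\cap T_{h^\star}$.

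Second, I would refine $B$ to a convex chain. Partition $[0,D]$ into intervals $I_j$ of width $w$; each vertical slab $I_j\times\RR$ meets $T_{h^\star}$ in a region of area $2\delta w=O(1)$, hence contains $O(1)$ points of $B$ by the density bound. Since $|B|=\Omega(n^{1/3})$, at least $\Omega(n^{1/3})$ of the slabs are non-empty, so I can pick one representative $p_j=(x_j,y_j)$ per non-empty slab. Writing $y_j = x_j^2/(2R) + \eta_j$ with $|\eta_j|\le\delta$ and noting that consecutive $x_j$'s differ by at least $w$, the discrete second difference satisfies
\[
(y_{j+1}-y_j)-(y_j-y_{j-1}) \;\ge\; \frac{w^2}{R}-4\delta \;=\; (c^2-4)\,n^{-1/6},
\]
which is strictly positive for $c>2$. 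Hence the selected $\Omega(n^{1/3})$ points lie on a strictly convex chain, as required.

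The main obstacle is the tight interplay between the three parameters $R,\delta,w$: averaging pushes towards small $\delta$, the density step pushes towards large $w$, and the curvature inequality $w^2/R>4\delta$ is binding. The three constraints leave a unique balance, and this is exactly what forces the exponents $1/6$ and $1/3$. A secondary technical point is that a single parabolic family may miss part of $A$ when $A$ clusters near a corner of $[0,D]^2$; the remedy is to repeat the argument with several symmetry-related families (e.g.\ parabolas opening up/down/left/right), so that at least one family captures a constant fraction of $A$, at the cost of only a constant factor in the final $\beta(\alpha)$.
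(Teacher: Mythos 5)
The decisive step in your argument is the claim that each vertical slab of width $w=c\,n^{1/6}$ meets the tube $T_{h^\star}$ in a region of area $2\delta w=O(1)$ and ``hence contains $O(1)$ points of $B$ by the density bound.'' This inference is false: minimum pairwise distance $1$ bounds the number of points in a region of \emph{diameter} $O(1)$, not of area $O(1)$. The slab piece of the tube is a sliver of length $\Theta(n^{1/6})$ and thickness $\Theta(n^{-1/6})$, and it can contain $\Theta(n^{1/6})$ points of $A$ spaced one apart along a nearly horizontal line; such a row really does fit inside a single slab piece, because over a span of $w$ the parabola deviates from its chord by only $O(w^2/R)=O(n^{-1/6})=O(\delta)$. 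Consequently, from $|A\cap T_{h^\star}|=\Omega(n^{1/3})$ you can only conclude that $\Omega(n^{1/6})$ slabs are nonempty, so your selection yields only $\Omega(n^{1/6})$ points in convex position, far short of the claimed $\Omega(n^{1/3})$. The underlying issue is that a first-moment average over the one-parameter family $\{T_h\}$ gives no control on how the points of a good tube distribute among the slabs (clustering). This is exactly what Valtr's proof, and its three-dimensional analogue in Section~\ref{sec:lower}, must and does control: one bounds the measure of placements in which \emph{two} given points $a,a'$ fall into one cap by $O(h/\dist(a,a'))$ (as in Lemma~\ref{lem:locus}), sums this over shells (Lemma~\ref{lem:shell}) to get a constant second-moment bound as in Lemma~\ref{lem:mu}(ii), and then applies Cauchy--Schwarz (Lemma~\ref{lem:key}) to conclude that a constant fraction of the $\Theta(n^{1/3})$ caps of a packing are simultaneously nonempty. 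Your rigid family of vertical translates of a single parabola does not come with such a pair estimate, so this is a genuine missing idea rather than a routine repair.

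Two smaller slips, both fixable but worth noting: consecutive representatives of nonempty slabs need not have $x$-coordinates differing by at least $w$ (one can sit at the right edge of its slab and the next at the left edge of the adjacent slab); take every other nonempty slab to restore the separation. Also, for unevenly spaced abscissae the plain second difference $(y_{j+1}-y_j)-(y_j-y_{j-1})$ is not the right convexity test (and your lower bound $w^2/R-4\delta$ for it is not valid in general); the correct criterion is that the slopes $(y_{j+1}-y_j)/(x_{j+1}-x_j)$ increase, which does go through with your parameters once the $x$-separation is guaranteed.
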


It is therefore apparent that the size of the largest convex subset contained in an $n$-element point set
strongly depends on the spread of the set. In the same paper, Valtr generalized Theorem~\ref{thm:valtr-lower}
to a broader range of the spread (by similar arguments):

\begin{theorem} {\rm (Valtr~\cite{Va92})}  \label{thm:valtr-lower-general}
  For every $\alpha>0$ and $\tau \in \left[\frac12, \frac34\right)$, there exists $\beta=\beta(\alpha,\tau)>0$
    such that any set $A$ of $n$ points in general position in the plane satisfying
    $\Delta(A) \leq \alpha n^\tau$ contains a subset of $\beta n^{1- 4\tau/3}$ points in convex position.
\end{theorem}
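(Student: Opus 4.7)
The plan is to extend Valtr's probabilistic argument behind Theorem~\ref{thm:valtr-lower}: I would sweep a random parabolic arc across the bounding box of $A$, slice the resulting strip into ``buckets'' that geometrically force convex position, and invoke the minimum-distance hypothesis to convert bucket counts into a lower bound on the size of a convex subset.

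First I would fix coordinates so that $A$ lies in a square $B$ of side $L = \alpha n^\tau$ with minimum pairwise distance at least $1$, and consider the one-parameter family of parabolic arcs $\gamma_c := \{(x,\, x^2/(2L) + c) : x \in [0, L]\}$, parametrized by a vertical shift $c$ such that $\gamma_c \subset B$. Each such arc has length and radius of curvature $\Theta(L)$. Around $\gamma_c$ I would take the strip $S_c$ of vertical width $w$, and partition $S_c$ into vertical buckets of horizontal length $b = 2\sqrt{Lw}$. A second-order Taylor expansion of the parabola shows that any sequence of points of $A \cap S_c$ whose $x$-coordinates lie in distinct buckets is in strict convex position, since the quadratic gap between the arc and a chord joining two bucket neighbors, namely $b^2/(2L) = 2w$, strictly exceeds the worst-case $w$-perturbation of the points from $\gamma_c$.

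The decisive scaling is $w := L^{-1/3} = n^{-\tau/3}$, which makes the bucket area $bw$ a positive absolute constant. By a packing argument based on the minimum-distance hypothesis, each bucket then contains only $O(1)$ points of $A$, so the number of non-empty buckets in $S_c$ is within a constant factor of $|A \cap S_c|$. Finally I would average over the shift: the valid range of $c$ has length $\Theta(L)$, while each point of $A$ is captured by $S_c$ for a $c$-interval of length $w$, so linearity of expectation gives $\EE[\,|A \cap S_c|\,] = \Theta(nw/L) = \Theta(n \cdot L^{-4/3}) = \Theta(n^{\,1 - 4\tau/3})$. By the probabilistic method some shift $c^*$ yields a strip with $\Omega(n^{1-4\tau/3})$ non-empty buckets; picking one point of $A$ from each of them produces the desired convex subset of the claimed size.

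The main obstacle is making the convex-position step rigorous: one has to choose $b$ so that the quadratic curvature gap strictly dominates the $w$-perturbation uniformly for every pair of consecutive chosen points, and then verify that the whole selected sequence (not merely each consecutive triple) lies on the boundary of its convex hull. The general-position hypothesis precludes coincidences. Once this is in place, the choice $w = L^{-1/3}$ is forced by balancing two constraints: the bucket area $bw$ should be $\Theta(1)$, so that each non-empty bucket contributes essentially one convex-position vertex via the minimum-distance condition; and $w$ should be as small as possible, so as not to waste the density of $A$. The two together yield exactly the exponent $1 - 4\tau/3$.
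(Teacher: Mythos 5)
Your parameters do mirror Valtr's (the width $w=L^{-1/3}$ plays the role of the cap height $h\asymp R^{-1/3}$, and $b\asymp\sqrt{Lw}$ that of the cap half-width), and the first-moment computation $\EE[\,|A\cap S_c|\,]=\Theta(nw/L)=\Theta(n^{1-4\tau/3})$ is fine; the flaw is the step that converts this into many nonempty buckets. A bucket has constant area, but it is a sliver of horizontal length $b=2\sqrt{Lw}=\Theta(n^{\tau/3})$ and vertical width $w=n^{-\tau/3}<1$, and a set with minimum pairwise distance $1$ can place $\Theta(b)=\Theta(n^{\tau/3})$ points inside it, spaced unit distance apart along its length: minimum distance bounds the count in terms of area \emph{plus diameter}, not area alone. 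So the claim ``each bucket contains only $\O(1)$ points of $A$'' is false, the number of nonempty buckets need not be comparable to $|A\cap S_c|$, and a first moment on $|A\cap S_c|$ by itself gives no lower bound on the number of occupied buckets.

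The gap is not merely technical, because your random family consists only of vertical translates of one fixed arc. An adversary who knows the arc can tile the square with roughly $n/b$ unit-spaced runs of about $b$ points each, the run at abscissa $x$ having slope $x/L$, i.e.\ parallel to the tangent of every $\gamma_c$ there; this set has spread $\O(n^\tau)$ for $\tau\geq\frac12$, yet for every shift $c$ the points of $A\cap S_c$ arrive in whole runs occupying $\O(1)$ buckets apiece, so the number of nonempty buckets is only about $(n/b)(w/L)\approx n^{1-5\tau/3}$ (already $n^{1/6}$ instead of the required $n^{1/3}$ at $\tau=\frac12$). This clustering is exactly what Valtr's argument --- and its three-dimensional analogue in Section~\ref{sec:lower} --- is built to defeat: one randomizes over rotations as well as translations, and replaces any per-cap packing bound by a second-moment argument, namely a shell estimate as in Lemma~\ref{lem:shell}, the bound $\sum_{a'\in A}\mu(\S(a)\cap\S(a'))=\O(1)$ as in Lemma~\ref{lem:mu}(ii), and Cauchy--Schwarz as in the proof of Lemma~\ref{lem:key}. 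Some ingredient of this kind is indispensable, and it is absent from your proposal. (A smaller, fixable point: one point per nonempty bucket is not automatically in convex position, since consecutive chosen points in adjacent buckets can be horizontally close and the curvature gap $2w$ is only guaranteed across horizontal separation at least $b$; choosing every other nonempty bucket repairs this at the cost of a constant factor, whereas Valtr's circular caps avoid the issue altogether because each cap is cut off from the rest by its own chord.)
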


On the other hand, as remarked by Valtr, the construction by Erd\H{o}s and Szekeres~\cite{ES60} can
be transformed into one with spread $\Delta<n$ and with at most $\log{n} +2$ points in convex position.
For the remaining range $\tau \in \left[\frac34, 1 \right)$, Valtr stated in his PhD Thesis~\cite{Va94}
(without proof) that an involved argument shows that any set $A$ of $n$ points in general position
in the plane satisfying $\Delta(A) \leq \alpha n^\tau$ (where $\alpha>0$ is a constant)
contains a convex subset of size $\beta n^{\eps}$ for some $\eps=\eps(\tau)>0$.
We will return to this question in Section~\ref{sec:conclusion}.

For density-restricted sets, the previously mentioned small perturbation of the lattice section $[n]^2$ provides
a matching upper bound:

\begin{theorem} {\rm (Valtr~\cite{Va92})} \label{thm:valtr-upper}
  For every $n \in \NN$ there exists an $n$-element point set $A \subset \RR^2$ in general position,
  satisfying $\Delta(A) \leq \alpha n^{1/2}$, for some constant $\alpha>0$,
 in which every subset in convex position has at most $\O(n^{1/3})$ points.
\end{theorem}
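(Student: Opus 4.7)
The plan is to construct $A$ as a small perturbation of the integer lattice section $G=[m]^2$ with $m=\lceil\sqrt{n}\rceil$. Since $|G|=m^2\ge n$, we can retain $n$ of its points; the resulting set has minimum pairwise distance $1$ and diameter at most $\sqrt{2}\,(m-1)$, giving spread $\O(\sqrt{n})$. The combinatorial core is Andrews' theorem (1963): any strictly convex polygon whose vertices lie in $G$ has $\O(m^{2/3})=\O(n^{1/3})$ vertices. Thus if we perturb $G$ into general position so that every convex subset of the perturbed set is, in grid coordinates, still strictly convex, Andrews' bound forces such subsets to have size $\O(n^{1/3})$, and the theorem follows.

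To achieve general position I would apply a perturbation $\pi\colon G\to\RR^2$ with $\|\pi(g)-g\|<\varepsilon$ for a suitably small constant $\varepsilon>0$, and let $A=\pi(G)$. With $\varepsilon\le 1/4$, the minimum pairwise distance in $A$ exceeds $1/2$ and the diameter is at most $\sqrt{2}(m-1)+\varepsilon$, so the spread of $A$ remains $\O(\sqrt{n})$. A generic $\pi$ already ensures that no three points of $A$ are collinear. The delicate issue is to choose $\pi$ so that on every line $\ell$ meeting $G$ in at least three lattice points --- not just axis-aligned rows and columns but also diagonals of every rational slope of bounded height --- the perturbed images form a slightly \emph{concave} arc, bent to a consistent side of $\ell$ (say the side containing the grid centroid). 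Provided this holds, any convex subset $S\subseteq A$ whose grid preimage $T\subseteq G$ contains three consecutive collinear points on some $\ell$ would exhibit a locally concave triple at the corresponding vertex of $S$, contradicting the convexity of $S$. Hence $T$ is strictly convex in $G$, and Andrews' theorem yields $|S|=|T|=\O(m^{2/3})=\O(n^{1/3})$.

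The main obstacle is the consistency required across the many lines of $G$ simultaneously: the perturbation must bend every such line concavely while remaining of magnitude $\varepsilon$. One natural implementation is $\pi(i,j)=(i,j)+\sum_\ell \delta_\ell\, v_\ell(i,j)$, where the sum ranges over slope-classes of lines through at least three lattice points of $G$, the vector field $v_\ell$ produces the desired concavity along $\ell$, and the weights $\delta_\ell$ decay geometrically in the complexity of the slope so that the total perturbation stays within $\varepsilon$. Designing such vector fields to act coherently on lines that cross each other --- so that the concavity condition imposed along one slope is not spoiled by contributions from another --- is the technical crux, and I expect this simultaneous control of all relevant directions to be the main obstacle. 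Once it is in place, Andrews' theorem closes the proof immediately.
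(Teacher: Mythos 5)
Your skeleton (perturb $[m]^2$ with $m=\Theta(\sqrt n)$, keep the spread at $\O(\sqrt n)$, and invoke Andrews' bound on convex lattice polygons) is indeed the frame of Valtr's argument, but the perturbation you propose is self-defeating, and the step that is supposed to reduce to Andrews is invalid. If every line of lattice points is bent into a strictly concave arc displaced toward the centroid, then the perturbed points of a single row lie on a strictly convex arc, so \emph{all} of them are vertices of their convex hull: one row alone is a convex-position subset of size $m=\Theta(n^{1/2})\gg n^{1/3}$, so the set you build violates the very bound you want to prove. Relatedly, the inference ``a locally concave triple contradicts convexity of $S$'' is not sound: any three non-collinear points are in convex position, and the middle point of such a triple can perfectly well remain a vertex of $\conv(S)$ (the single-row example is exactly this situation). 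To exclude a point from a convex-position subset you must show it lies in the convex hull of the \emph{other chosen points}, which depends on the whole subset, not on the orientation of one triple; so you cannot conclude that the grid preimage $T$ of a convex subset is strictly convex, and in fact it need not be.

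What the actual proof needs, and what Valtr's construction provides, is a perturbation under which every line of lattice points becomes a Horton set (via the digit-based shifts $(N)_\eps$ recalled in Section~\ref{sec:prelim}), so that among the perturbed points of any one line only $\O(\log n)$ can be in convex position --- not ``none beyond two''. The bound is then assembled differently from your sketch: the preimage $T$ lies on the boundary of the lattice polygon $\conv(T)$; Andrews' theorem bounds its vertices by $\O(n^{1/3})$; the points of $T$ interior to hull edges contribute $\O(\log)$ each by the Horton property; and these per-edge contributions must be summed against a count of edges containing many collinear lattice points (in the spirit of Lemma~\ref{lem:new2}, and exactly as done in three dimensions in Lemmas~\ref{lem:edge}--\ref{lem:summation} of this paper) to avoid losing the logarithmic factor. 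These ingredients are missing from your proposal, and the obstacle you identify (making the concave bends consistent across crossing lines) is not the true difficulty --- the concavity requirement itself is the wrong goal.
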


For a positive integer $n$, let $\gamma_d(n)$ denote the largest integer such that any set $A$ of $n$ points
in general position in $\RR^d$, satisfying $\Delta(A) \leq \alpha n^{1/d}$  for a fixed $\alpha>0$,
contains at least $\gamma_d(n)$ points in convex position. In these terms, Valtr's result is that
$\gamma_2(n)=\Theta(n^{1/3})$. The estimation of $\gamma_d(n)$ for $d \geq 3$ has remained an open problem.
Here we continue this line of research in three dimensions
and prove that $\gamma_3(n)=\Theta(n^{1/2})$.

\medskip\noindent\textbf{Our Results.}

\begin{theorem} \label{thm:lower}
  For every $\alpha>0$ there exists $\beta(\alpha)>0$ such that
  any set of $n$ points in $\RR^3$ in general position, satisfying $\Delta(A) \leq \alpha n^{1/3}$,
  contains a subset of at least $\beta n^{1/2}$ points in convex position.
 In particular, $\gamma_3(n)=\Omega(n^{1/2})$.
\end{theorem}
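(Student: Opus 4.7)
The plan is to slice the ball containing $A$ into thin parallel slabs, apply Theorem~\ref{thm:valtr-lower} inside each slab (to its planar projection), and then stitch the resulting planar convex chains into a single set in 3D convex position. After rescaling we may assume the minimum pairwise distance in $A$ equals $1$, so $A$ lies in a ball $B\subset\RR^3$ of radius $R=O(n^{1/3})$. Pick a unit direction $v\in S^2$ (uniformly at random) and partition $B$ into parallel slabs $S_1,\dots,S_L$ of thickness $h=\tfrac12$ perpendicular to $v$; there are $L=\Theta(n^{1/3})$ such slabs. Project each $A_i := A\cap S_i$ orthogonally onto $v^{\perp}$ to obtain a planar set $A_i'$ with $n_i := |A_i|$ points. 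Since two points in a common slab differ in $v$-coordinate by at most $h<1$, their planar distance is at least $\sqrt{1-h^2}=\Omega(1)$; combined with diameter $O(n^{1/3})$, the planar spread satisfies $\Delta(A_i')=O(\sqrt{n_i})$ as soon as $n_i=\Omega(n^{2/3})$.

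By pigeonhole on $\sum_i n_i=n$, at least $\Omega(n^{1/3})$ slabs are ``dense'' in the sense $n_i=\Omega(n^{2/3})$. For each such slab, Theorem~\ref{thm:valtr-lower} extracts a planar convex chain $C_i'\subseteq A_i'$ of size $\Omega(n_i^{1/3})=\Omega(n^{2/9})$; its preimage $C_i\subseteq A_i$ in $\RR^3$ is in convex position within $S_i$, because convex combinations in $\RR^3$ project to convex combinations in $v^{\perp}$.

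The crux --- and the main obstacle --- is assembling the $C_i$'s into a single subset of $A$ in \emph{global} 3D convex position of size $\Omega(n^{1/2})$; a naive union fails, as a point from a middle slab can lie in the convex hull of points from neighbouring slabs. The plan is to pick a secondary random direction $w\in v^{\perp}$ and, within each slab, retain only the ``upper'' sub-chain of $C_i'$ in direction $w$; an Erd\H{o}s--Szekeres-type sub-selection across the slab indices then isolates chains whose lifts jointly lie on an upper envelope of a 3D convex surface, hence in convex position. Accounting for the losses incurred in the sub-selection yields $\Omega(n^{1/2})$ surviving points, matching the upper bound $O(n^{1/2})$ that follows from Andrews' theorem in $\RR^3$ (a convex polytope with $k$ vertices at mutual distance $\ge 1$ has volume $\Omega(k^2)$). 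This geometric combining argument is the essentially new technical ingredient: Valtr's planar argument does not generalize directly to three dimensions, and handling the combining step with polynomial loss is the core novelty behind Theorem~\ref{thm:lower}.
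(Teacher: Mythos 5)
Your reduction to the planar case is set up carefully (the spread estimate inside a slab, the pigeonhole giving $\Omega(n^{1/3})$ slabs with $\Omega(n^{2/3})$ points, and the fact that convex position is preserved under lifting from the projection are all fine), but the proof stops exactly where the difficulty begins: the step that assembles the per-slab chains $C_i$ into one set in global convex position is only described as a plan, not proved. Retaining the ``upper'' sub-chain in a secondary direction $w$ does not make the union convex --- a point of a middle slab can still lie in the convex hull of points from slabs above and below it unless the support values of the chains, in \emph{every} planar direction, vary concavely with the slab index; Valtr's theorem gives you no control whatsoever over where in the slab its chain sits, so you cannot guarantee this. An ``Erd\H{o}s--Szekeres-type sub-selection across the slab indices'' is exactly the kind of step that typically costs a polynomial (or worse) factor: with $\Theta(n^{1/3})$ slabs of chains of size $\Theta(n^{2/9})$, a selection that keeps only $\Theta(\sqrt{n^{1/3}})$ or $\Theta(\log n)$ slabs already falls below $n^{1/2}$, and you give no accounting showing the losses can be kept to the $n^{1/18}$ fraction you can afford (the naive union has size $n^{5/9}$, versus the target $n^{1/2}$). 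Since you yourself identify this combining step as the core of the theorem, the proposal as it stands has a genuine gap rather than a complete alternative proof.

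For comparison, the paper avoids any stitching step. It places $\Omega(n^{1/2})$ pairwise disjoint spherical caps of height $h=\alpha n^{-1/6}$ on a ball congruent to the enclosing ball $B$ (see \eqref{eq:P}); because the caps are disjoint shallow caps of a single sphere, \emph{any} choice of at most one point per cap is automatically in convex position (Lemma~\ref{lem:convex}), so global convexity is built into the construction rather than argued a posteriori. The remaining work is a second-moment/Cauchy--Schwarz argument over random rigid motions of the capped ball (Lemmas~\ref{lem:shell}--\ref{lem:mu} and Lemma~\ref{lem:key}), which uses the density restriction to show that a constant fraction of the caps contain a point of $A$ in expectation. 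If you want to salvage your slab approach, you would need an analogous device that forces the chains from different slabs onto a single convex surface from the outset, rather than hoping to extract one afterwards.
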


\begin{theorem} \label{thm:lower-general}
  For every $\alpha>0$ and $\tau \in \left[\frac13, \frac23\right)$, there exists $\beta=\beta(\alpha,\tau)>0$
    such that any set $A$ of $n$ points in general position in $\RR^3$ satisfying
    $\Delta(A) \leq \alpha n^\tau$ contains a subset of $\beta n^{1- 3\tau/2}$ points in convex position.
\end{theorem}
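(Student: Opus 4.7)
The plan is to derive Theorem~\ref{thm:lower-general} from Theorem~\ref{thm:lower} by allowing the density parameter to depend on $n$. Any set $A$ with $\Delta(A)\le \alpha_0 n^\tau$ can equivalently be written as $\Delta(A)\le \alpha(n)\,n^{1/3}$ with $\alpha(n) := \alpha_0 n^{\tau - 1/3}$; for $\tau\in[1/3,2/3)$ this function grows only polynomially in $n$, with exponent in $[0,1/3)$. Hence $A$ still fits the hypothesis of Theorem~\ref{thm:lower}, but with a mildly $n$-dependent density parameter $\alpha(n)$ in place of a constant.

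The crux is to extract the explicit dependence $\beta=\beta(\alpha)$ from the proof of Theorem~\ref{thm:lower}. Matching the two bounds at the boundary $\tau=1/3$ forces the scaling to be $\beta(\alpha)=\Omega(\alpha^{-3/2})$, and I would verify that the probabilistic/geometric construction underlying Theorem~\ref{thm:lower} actually achieves this rate and continues to work when $\alpha$ is allowed to grow polynomially with $n$. Granting this, applying Theorem~\ref{thm:lower} to $A$ with $\alpha = \alpha(n)$ yields
\[
\beta(\alpha(n))\cdot n^{1/2} \;=\; \Omega\bigl((\alpha_0 n^{\tau-1/3})^{-3/2}\bigr)\cdot n^{1/2} \;=\; \Omega_{\alpha_0,\tau}\bigl(n^{\,1-3\tau/2}\bigr),
\]
which is exactly the claimed bound. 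An equivalent pigeonhole formulation---cover $A$ by cubes of side $s$, locate the densest cube (with $m\ge c\,n^{1-3\tau}s^3$ points and spread $O(s)$), and apply the explicit-$\alpha$ form of Theorem~\ref{thm:lower} inside---produces the same rate, with the optimum attained at $s=\Theta(n^\tau)$, i.e., a single cube covering all of $A$. So partitioning yields nothing new beyond the substitution.

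The main obstacle is the careful audit of the proof of Theorem~\ref{thm:lower} needed to pin down the scaling $\beta(\alpha)=\Omega(\alpha^{-3/2})$ and to confirm it remains valid for polynomially growing $\alpha$. The target exponent $3/2$ is not arbitrary: it equals $d(1-p)$ with $d=3$ and $p=1/2$ the density-restricted exponent, mirroring the planar scaling $\alpha^{-4/3}=\alpha^{-2(1-1/3)}$ that is implicitly needed to pass from Theorem~\ref{thm:valtr-lower} to Theorem~\ref{thm:valtr-lower-general}. The probabilistic constructions that prove such theorems typically lose a controllable multiplicative factor per step, and my expectation is that those factors compound to exactly $\alpha^{3/2}$. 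Once this scaling is in hand, the remainder of the proof is the one-line calculation above, and the analogue of the exclusion $\tau<2/3$ is explained by the fact that $1-3\tau/2$ hits zero there.
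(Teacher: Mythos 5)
Your overall plan---absorb the spread into the density parameter and rerun the cap construction---is in spirit the same as the paper's, but the one step you defer is exactly where the content lies, and your expectation about it is wrong. If you trace the $\alpha$-dependence through the proof of Theorem~\ref{thm:lower} \emph{as written}, with its fixed cap height $h=\alpha n^{-1/6}$, you do not get $\beta(\alpha)=\Omega(\alpha^{-3/2})$. Indeed, the first-moment constants are $c_2,c_3=\Theta(\alpha^3)$ (cap volume $h^2R=\Theta(\alpha^3)$); the bound \eqref{eq:c5} carries its own factor $\Theta(\alpha^3)$ (the measure of admissible centers for a fixed direction), so via $r^2h=\Theta(\alpha^3)$ one gets $c_4=\Theta(\alpha^6)$ in Lemma~\ref{lem:mu}(ii); meanwhile $\mu(\S)=\Theta(\alpha^3 n)$ and the number of caps in \eqref{eq:P} is $n^{1/2}/2$, independent of $\alpha$. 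Hence the hit-fraction $c$ in Lemma~\ref{lem:key} is $\Theta\bigl(c_2^2/(c_4\alpha^3)\bigr)=\Theta(\alpha^{-3})$, so the proof as written only yields $\beta(\alpha)=\Theta(\alpha^{-3})$. Substituting $\alpha(n)=\alpha_0 n^{\tau-1/3}$ then gives $\Omega(n^{3/2-3\tau})$ points in convex position, which is strictly weaker than the claimed $n^{1-3\tau/2}$ for every $\tau>\frac13$ and is vacuous for $\tau\geq \frac12$. So the ``factors compound to exactly $\alpha^{3/2}$'' assertion is false for the construction you are invoking, and your exponent-matching heuristic, while it identifies the correct target rate, does not prove it.

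The missing idea is that the cap height must be re-tuned together with the spread: one should take $h\asymp R^{-1/2}$, i.e.\ $h=\Theta(\alpha n^{-\tau/2})$ when $R=\alpha n^{\tau}$ (equivalently $h\asymp \alpha^{-1/2}n^{-1/6}$ in your growing-$\alpha$ formulation), so that both $h^2R$ and $r^2h$ stay bounded. Then $c_2,c_3,c_4$ remain genuine constants, $\mu(\S)=\Theta(R^3)=\Theta(n^{3\tau})$ gives hit-fraction $\Omega(n^{1-3\tau})$, the number of caps becomes $R/(2h)=\Theta(n^{3\tau/2})$, and the product is $\Omega(n^{1-3\tau/2})$ as desired. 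This re-tuning is precisely the paper's (short) proof of Theorem~\ref{thm:lower-general}: it sets $R=\alpha n^{\tau}$, $h=\alpha n^{-\tau/2}$, $r=\Theta(\alpha n^{\tau/4})$, notes $r^2h=\Theta(\alpha^3)$, and reruns the Theorem~\ref{thm:lower} argument. So your proposal is salvageable, but not by a black-box audit of Theorem~\ref{thm:lower}; the re-choice of $h$ inside the construction is the essential new step, and once you make it the remaining computation is the one you already wrote down.
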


\begin{theorem} \label{thm:upper}
  For every $n \in \NN$ there exists an $n$-element point set $A \subset \RR^3$ in general position,
  satisfying $\Delta(A) \leq \alpha n^{1/3}$, for some constant $\alpha>0$,
 in which every subset in convex position has at most $\O(n^{1/2})$ points.
  In particular, $\gamma_3(n)=\O(n^{1/2})$.
\end{theorem}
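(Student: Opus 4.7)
}
The plan is to lift Valtr's planar construction (Theorem~\ref{thm:valtr-upper}) to three dimensions by replacing the integer grid $[m]^2$ with the lattice cube $[m]^3$, where $m=\lceil n^{1/3}\rceil$, and taking a small generic perturbation. Concretely, I start with an arbitrary $n$-point subset $A_0\subseteq [m]^3 \cap \ZZ^3$ and define $A=\{p+\epsilon\, r_p : p\in A_0\}$, where $\epsilon\in(0,\tfrac14)$ is a small constant and $r_p$ are generic unit vectors, chosen so that $A$ is in general position. Since the minimum pairwise distance in $A$ is $\geq 1-2\epsilon>\tfrac12$ and the diameter is $\leq m\sqrt{3}+2\epsilon$, the spread is $\Delta(A)=\O(m)=\O(n^{1/3})$. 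So the density restriction is immediate.

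The heart of the argument is the upper bound $|S|=\O(m^{3/2})=\O(n^{1/2})$ for every subset $S\subseteq A$ in convex position. The essential input is Andrews' classical theorem: a convex polytope in $\RR^3$ with $k$ vertices at integer points has volume $\Omega(k^2)$. Applied to any lattice polytope contained in $[m]^3$, this yields $k=\O(m^{3/2})$, which is exactly the bound we are after. The remaining issue is that $S$ itself need not consist of lattice points; only the unperturbed corresponding set $S'\subseteq A_0\subseteq[m]^3$ does.

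To transfer the bound through the perturbation, I would observe that if $p\in S$ is extreme for $S$ then there is a unit vector $u_p$ and a scalar $c_p$ with $u_p\cdot p>c_p>u_p\cdot q$ for every other $q\in S$. Pulling back to the lattice, each $p'\in S'$ then satisfies
\[
  u_p\cdot p' \;>\; u_p\cdot q'-2\epsilon\qquad\text{for every }q'\in S'\setminus\{p'\},
\]
i.e.\ $p'$ is \emph{$2\epsilon$-extreme} in $S'$. The plan is to prove a quantitative Andrews-type lemma: for $\epsilon$ smaller than an absolute constant, the number of $2\epsilon$-extreme lattice points of any subset of $[m]^3$ is $\O(m^{3/2})$. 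Intuitively, the shifted points $\{p'+2\epsilon u_p\}$ are in strict convex position and lie within $2\epsilon$ of $\ZZ^3$, so a discretization/rounding argument should reduce them to a genuine convex lattice polytope in a slightly inflated cube, to which Andrews' theorem applies and gives the desired $\O(m^{3/2})$ bound. A suitable choice of $\epsilon$ keeps all the constants under control.

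The main obstacle will be making the last step watertight: Andrews' theorem requires exact lattice vertices, while the convex hull of $S$ is only approximately a lattice polytope. The risk is that a tiny perturbation could, in principle, turn a long thin sliver of collinear or coplanar lattice points (of which $[m]^3$ has many) into a large convex chain or cap. I expect to rule this out by exploiting that a near-extreme direction $u_p$ witnesses $p'$ being within $\O(\epsilon)$ of the boundary of $\conv(S')$ and that, after a minute inflation, the resulting near-extreme points can be identified with genuine vertices of a lattice polytope inscribed in a cube of side $(1+\O(\epsilon))m$, whose vertex count is still $\O(m^{3/2})$ by Andrews. If this direct route proves too delicate, an alternative is to invoke an existing density-restricted construction from the literature cited in the keywords (the Valtr or Conlon--Lim grid), whose extremal properties already build in this stability.
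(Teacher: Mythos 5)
There is a genuine gap, and it sits exactly where you flag the risk. Your ``quantitative Andrews-type lemma'' is false as stated: take $S'=[m]^3\cap\ZZ^3$ and any lattice point $p'$ on the bottom facet of the cube. With witness direction $u=(0,0,-1)$ we have $u\cdot p'\geq u\cdot q'$ for all $q'\in S'$, so every one of the $m^2=n^{2/3}$ facet points is $2\epsilon$-extreme. More generally, every lattice point on the boundary of $\conv(S')$ (and there can be $\Theta(m^2)$ of them, lying in the relative interiors of facets and edges) is $2\epsilon$-extreme, so no counting argument of this shape can give better than $\O(n^{2/3})$. Andrews' theorem bounds the number of \emph{vertices} of a lattice polytope; the preimages of your convex-position points are only guaranteed to lie on the boundary of the lattice hull, and the gap between ``boundary lattice point'' and ``vertex'' is precisely the whole difficulty. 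The rounding/inflation idea cannot repair this: the shifted points $p'+2\epsilon u_p$ coming from one facet all round back to coplanar lattice points, which are not in convex position as a lattice set.

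The deeper problem is that the theorem is simply not true for an \emph{arbitrary} generic $\epsilon$-perturbation, so no argument along your lines can close the gap. For instance, move each bottom-facet point $(x,y,0)$ to $\bigl(x,y,-\delta((x-\tfrac m2)^2+(y-\tfrac m2)^2)\bigr)$ with $\delta=\epsilon/m^2$ (and perturb the rest generically): this is an $\epsilon$-perturbation in general position, yet all $m^2=n^{2/3}$ facet points lie on a strictly convex cap and are vertices of the hull of the whole set, hence in convex position. So the perturbation must be \emph{engineered}, not generic; this is the actual content of the paper's proof. There the grid is perturbed by a nested, Horton/Valtr/Conlon--Lim-style map $\Phi$ built from the binary expansions of the coordinates, and the work consists in showing that the image of any collinear block of $m$ lattice points contributes only $\O(\log^2 m)$ points in convex position, that any coplanar block contributes only $\O(m^{1/3})$, and then in a nontrivial summation over the $\O(n^{1/2})$ faces of the lattice hull (using Andrews together with bounds on lattice points per facet in terms of the primitive normal, and a refined Andrews-type bound on edges containing many lattice points). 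Your fallback of ``citing the Conlon--Lim grid'' does not substitute for this either: that construction is designed and proved to avoid large \emph{holes}, not large convex subsets, and establishing the convex-position bound for a grid-like perturbation is exactly what Theorem~\ref{thm:upper}'s proof has to do from scratch.
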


\begin{theorem} \label{thm:approx}
Given any $n$-element point set $A\subset \RR^3$ in general position, satisfying $\Delta(A) \leq \alpha n^{1/3}$
for a fixed $\alpha$, a $\Omega(n^{-1/6})$-factor approximation of the maximum-size convex subset of points
can be computed by a randomized algorithm in $\O(n \log{n})$ expected time.
\end{theorem}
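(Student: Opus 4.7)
The plan is to combine Theorem~\ref{thm:lower}, realized as a concrete randomized $\O(n \log n)$ algorithm, with an elementary upper bound on the size of any convex subset of a density-restricted set; the approximation ratio will then drop out as $n^{1/2}/n^{2/3} = n^{-1/6}$.

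First I would convert the probabilistic proof of Theorem~\ref{thm:lower} into a randomized algorithm running in $\O(n \log n)$ expected time. The construction lays a grid on the bounding cube of $A$, projects the points of each slab (or cell) onto a coordinate plane, extracts long convex chains via LIS-type computations on the sorted projected points, and glues these chains across slabs into a single 3D convex subset. Each primitive -- bucketing into the grid, coordinate sorting, longest increasing/decreasing subsequence via patience sorting, and pointer-based assembly -- admits a standard $\O(n \log n)$ implementation. Executing the algorithm yields, with high probability, a set $S \subseteq A$ of size $|S| \geq c\, n^{1/2}$ in convex position, for a constant $c = c(\alpha) > 0$.

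Second, I would prove that every subset $T \subseteq A$ in convex position satisfies $|T| = \O(n^{2/3})$. Let $P = \conv(T)$, so $T \subseteq \partial P$. Because $\diam(A) \leq \alpha n^{1/3}$, $P$ lies inside a ball of radius $R = \O(n^{1/3})$. The open balls $\{B(t, 1/2) : t \in T\}$ are pairwise disjoint (by the unit-minimum-distance assumption), and at each $t \in T$ a supporting hyperplane of $P$ bounds a half-ball of volume $\pi/12$ lying outside $P$. These $|T|$ disjoint half-balls all fit inside the Minkowski shell $(P + B(0,1/2)) \setminus P$, whose volume is
\[
\frac{1}{2}\area(\partial P) + \O(R) + \O(1) \;=\; \O(R^2) \;=\; \O(n^{2/3}),
\]
by Steiner's formula (since $\area(\partial P)$ and the mean width of $P$ are bounded by those of the enclosing ball). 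Dividing by $\pi/12$ yields $|T| = \O(n^{2/3})$. Combining the two bounds,
\[
\frac{|S|}{|\opt|} \;\geq\; \frac{c\, n^{1/2}}{\O(n^{2/3})} \;=\; \Omega(n^{-1/6}),
\]
which delivers the claimed approximation in $\O(n\log n)$ expected time.

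The main obstacle is converting Theorem~\ref{thm:lower} into a near-linear-time algorithm: one must show that each step of its probabilistic proof (the random draws, the per-slab processing, and the final assembly into a convex subset) can be done within the $\O(n \log n)$ budget, ruling out super-linear subroutines such as all-pairs distance or visibility tests. The Steiner-type bound on $|T|$ is by contrast completely standard.
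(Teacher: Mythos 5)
The second half of your argument (the bound $\opt=\O(n^{2/3})$ via disjoint half-balls in the Steiner shell of $\conv(T)$) is fine and is essentially the paper's own argument, which bounds $\area(\partial\conv(A_{\opt}))$ by the surface area of the enclosing ball of radius $R=\O(n^{1/3})$ and invokes the minimum-distance-$1$ condition (the $3$-dimensional analogue of Alon--Katchalski--Pulleyblank). The division $\Omega(n^{1/2})/\O(n^{2/3})=\Omega(n^{-1/6})$ is also exactly how the paper concludes the ratio.

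The genuine gap is the algorithmic half, which is the actual content of the theorem. You say you will ``convert the probabilistic proof of Theorem~\ref{thm:lower} into a randomized algorithm,'' but what you then describe (a grid on the bounding cube, per-slab projections, LIS-type computations extracting ``convex chains,'' and gluing chains across slabs) is not that proof and is not justified: longest increasing subsequence produces monotone, not convex, subsequences; extracting maximum convex chains is a different computation; and no argument is given that chains from different slabs can be merged into a single set in convex position in $\RR^3$, let alone one of size $\Omega(n^{1/2})$. The proof of Theorem~\ref{thm:lower} has no slabs or chains at all: it packs $\Theta(n^{1/2})$ spherical caps of height $h=\Theta(n^{-1/6})$ on a ball congruent to the enclosing ball $B$, places a random congruent copy (random rotation in $\mathrm{SO}(3)$ plus random center in $B'$), and shows via Lemmas~\ref{lem:key}--\ref{lem:mu} that in expectation a constant fraction of the caps contain a point of $A$; choosing one point per nonempty cap gives convex position by Lemma~\ref{lem:convex}. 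The paper's algorithm implements exactly this: a deterministic packing of $\Theta(n^{1/2})$ caps, a random rigid motion (constant expected number of retries by Markov's inequality), and cap-emptiness tests, each reducible to a halfspace emptiness query (all caps live on the same sphere) answered in $\O(\log n)$ time after $\O(n\log n)$ expected preprocessing (Afshani--Chan), for $\O(n^{1/2}\log n)$ query time and $\O(n\log n)$ total. Without this (or some other correct) algorithmic realization, your proposal asserts the existence of a set $S$ with $|S|\geq c\,n^{1/2}$ computable in $\O(n\log n)$ time but does not prove it.
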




In 1960, Erd\H{o}s and Szekeres~\cite{ES60} constructed for every positive integer $k$,
a set of $n:=2^k$ points in general position in the plane, such that the size of the largest convex subset
is $k-1=\log{n}-1$. In 2017, Duque, Fabila-Monroy, and Hidalgo-Toscano~\cite{DFH18} showed how to realize
that construction on an integer grid of size $\O(n^2 \log^3{n})$.
In 1978, Erd\H{o}s asked whether, given any positive integer $k$,
every sufficiently large point set in general position in the plane contains $k$ points in convex position
such that the respective polygon is empty of other points.
In 1983, Horton~\cite{Hor83} gave  a negative answer by constructing arbitrarily large point sets with
no empty convex $7$-gon. Such sets are generally called Horton sets. In 2017,
Barba, Duque, Fabila-Monroy, and Hidalgo-Toscano~\cite{BDFH17} showed how to realize a Horton set of size $n$
on an integer grid of size $\O(n^{\frac12 \log{(n/2)}})$. On the other hand, they proved that any set of $n$ points
with integer coordinates combinatorially equivalent to a Horton set contains a point with a coordinate at least
$\Omega(n^{\frac{1}{24} \log{(n/2)}})$.

  Given a point set in general position in $\RR^d$, the problem of computing a maximum-size
  subset in convex position can be solved in polynomial time for $d=2$ by the dynamic programming
  algorithm of Chv\'atal and Klincsek~\cite{CK80}; their algorithm runs in $\O(n^3)$ time.
  In contrast, the general problem in $\RR^d$ was shown to be \textrm{NP}-complete for every $d \geq 3$
  by Giannopoulos, Knauer, and Werner~\cite{GKW13}.
  
Several problems concerning convex polygons (resp., polytopes) whose vertices lie in a Cartesian product
of two (resp., $d$) sets of reals have been recently studied in~\cite{CDM+20}.
See also \cite[Sec.~8.2]{BMP05} for additional problems related to Horton sets and the Erd\H{o}s-Szekeres theorem.
A comprehensive survey on the Erd\H{o}s--Szekeres problem is due to Morris and Soltan~\cite{MS16}.

Recently, Bukh and Dong~\cite{BD22} independently proved that $\gamma_d=\Theta_d(n^{(d-1)/(d+1)})$ for all $d\in \NN$, using more or less different techniques.

\smallskip\noindent\textbf{Definitions and notations.}
For a finite point set $S \subset \RR^d$, let $g(S)=g_d(S)$ be the maximum size
of a convex subset of $S$; when there is no confusion, the subscript $d$ may be omitted.
Let $g(n)$ be defined as
\[ g(n) =\min \{g(S) \colon |S|=n, S \text{ is in general position}\}. \]
The \emph{interior} and the \emph{boundary} of a set $S \subset \RR^d$
are denoted by $\interior(S)$ and $\partial S$, respectively.
A vector in $\vec{v} = (x_1,\ldots,x_d) \in \ZZ^d$ is \emph{primitive} if $\gcd(x_1,\ldots,x_k)=1$.
Let $[n]$ denote the set $\{1,2,\ldots,n\}$.
Unless specified otherwise, all logarithms are in base $2$.

Here we use the convention that the approximation ratio of an algorithm is $<1$ for a maximization problem
and $>1$ for a minimization problem (as in~\cite{WS11}).
We frequently write $\O_d$, when needed, to indicate that the hidden constant in the $\O$ asymptotic notation
depends only on $d$.

\section{Preliminaries}  \label{sec:prelim}

A set of points in the plane $\{p_i=(x_i,y_i), i=1,\ldots,n\}$ is in \emph{strong general position}
if it is in general position and no two $x$- or $y$-coordinates are the same.
More generally, a set of points $P \subset \RR^d$ $\{p_i, i=1,\ldots,n\}$ is said to be in \emph{strong general position}
if $P$ is in general position and no two values of the $j$th coordinate are the same, for $j=1,\ldots,d$.
For any $f\in \{0,1,\ldots , d-1\}$, denote by $\pi_f$ the orthogonal projection from $\RR^d$ onto $\RR^f$
obtained by retaining the first $f$ coordinates.

Let $A,B\subset \RR^d$ be finite sets such that $A\cup B$ is in strong general position in $\RR^d$.
We say that $A$ lies \emph{deep below} $B$ (equivalently, $B$ lies  \emph{high above} $A$)
if the following holds:
every point in $A$ lies below all hyperplanes determined by $d$ points in $B$ and
every point in $B$ lies above all hyperplanes determined by $d$ points in $A$
(w.r.t.\ the $d$th coordinate)~\cite[Ch.~3.2]{Mat02}.
Whenever needed, we extend this relation to apply to any coordinate other than the last one.

Let $X\subset \RR^d$ be a finite set in general position. A $k$-element set $Y \subseteq X$ is called
a \emph{$k$-hole} in $X$ if $Y$ is in convex position and $\conv(Y) \cap X=Y$~\cite[Ch.~3.2]{Mat02}. In the plane
($d=2$), $Y$ is the vertex set of a convex polygon with no point of $X$ inside.
Horton~\cite{Hor83}, answering a question by Erd\H{o}s, constructed arbitrary large finite sets without a $7$-hole.
On the other hand, every sufficiently large finite set in general
position contains six points that determine a $6$-hole (\ie, an empty convex hexagon);
this was shown about 25 years later by Gerken~\cite{Ger08} and respectively Nicol\'as~\cite{Nic07}.

\smallskip\noindent\textbf{Horton sets and Valtr grids in the plane.}
For a sequence of points sorted by $x$-coordinates (with no duplicate $x$-coordinates) $P=\{p_1,p_2,\ldots,p_n\}$, let
$P_0=\{p_2,p_4,\ldots\}$ denote the subsequence of points with even indexes and
$P_1=\{p_1,p_3,\ldots\}$ denote the subsequence of points with odd indexes.
A finite set $H$ is a \emph{Horton set} if either (i)~$|H|=1$, or (ii)~$|H| \geq 2$,
both $H_0$ and $H_1$ are Horton sets and one of these two sets lies deep below the other one.

 The above definition allows one to construct inductively Horton sets
in the plane of any size; see~\cite[Ch.3.2]{Mat02}. Following~\cite{CL21,Hor83,Va92}, one can
construct Horton sets as follows. For a nonnegative integer $N$, let
its binary representation be $N=\sum_{i \geq 0} a_i 2^i$, where $a_i \in \{0,1\}$.
For any positive $0< \eps < 1/2$, denote by $(N)_\eps$
the real number
\begin{equation} \label{eq:(N)_eps}
  (N)_\eps = \sum_{i \geq 0} a_i \eps^{i+1},
\end{equation}
and note that $0< (N)_\eps < 2\eps$. Then the set
$S= \{p_i : i \in [n]\}, \text{ where } p_i = (i,(i)_\eps)$,
is a Horton set. Indeed, $S_0$ lies deep below $S_1$ if $\eps$ is sufficiently small, and further on,
this property holds recursively for the smaller sets  $S_0$ and $S_1$.

Interestingly enough, Valtr~\cite{Va92} showed how to use multiple Horton sets to obtain
planar sets that are sufficiently small perturbations of a lattice section $[n]^2$ and that preserve
the key property of the Horton set, namely that of not containing any $7$-hole.
The resulting sets (described in Theorem~\ref{thm:valtr-upper}) are not Horton sets per se.

\smallskip\noindent\textbf{Higher dimensions.}
Recently, Conlon and Lim~\cite{CL21} extended Valtr's planar construction to higher dimensions
such that the resulting sets do not contain large holes (\ie, convex polytopes with a large number of vertices
that are empty of points from the set).
Their result is summarized in Theorem~\ref{thm:cl-grid}.

\begin{theorem} {\rm (Conlon and Lim~\cite{CL21})} \label{thm:cl-grid}
  For any integers $n \geq 1$ and $d \geq 2$ and any $\eps>0$, there exists an integer $C_d=d^{\O(d^3)}$
  and a set of points $\{P_{\vec{x}} \mid \vec{x} \in [n]^d\} \subset \RR^d$ that is $C_d$-hole-free and satisfies
    $\dist(P_{\vec{x}}, \vec{x}) \leq \eps$ for all $\vec{x} \in [n]^d$.
\end{theorem}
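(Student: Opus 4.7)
The plan is to lift the planar Horton-like perturbation of $[n]^2$ described around equation~(\ref{eq:(N)_eps}) to arbitrary dimension $d$. For each $\vec{x} = (x_1,\ldots,x_d) \in [n]^d$, define $P_{\vec{x}} = \vec{x} + \vec{\delta}(\vec{x})$ where the displacement $\vec{\delta}(\vec{x})$ in each coordinate direction depends on the binary expansions of the other coordinates via nested Horton-style terms $(N)_\eta = \sum_{i \geq 0} a_i \eta^{i+1}$. The scales $\eta$ are chosen to decrease geometrically across coordinate directions and across bit-recursion levels, small enough that $\dist(P_{\vec{x}},\vec{x}) \leq \eps$ for every $\vec{x}$, yet large enough to break all degeneracies and realize a recursive ``deep below/high above'' structure. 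Iterating this construction across all $d$ coordinate directions and $\O(\log n)$ bit-levels yields the candidate set.

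I would then prove by induction on $d$ that the resulting set is $C_d$-hole-free. The inductive step relies on the following recursive splitting property: for any axis-parallel dyadic hyperplane, the two lattice halves on either side satisfy the deep-below relation in some perpendicular coordinate direction, and this property persists when one zooms into any axis-aligned dyadic subbox. Given a hypothetical hole $Y$ with $|Y| > C_d$, a pigeonhole argument over the $d$ coordinate directions and the $\O(\log n)$ bit-recursion levels locates a dyadic hyperplane that splits $Y$ into two still-large parts $Y_0$ and $Y_1$, with $Y_0$ deep below $Y_1$. The emptiness of $\conv(Y)$ together with the deep-below property then forces a large convex-position subset of the perturbed lattice inside a lower-dimensional slab to also be empty, which invokes the inductive bound $C_{d-1}$. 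Carefully tracking the loss incurred at each of the $\O(d^2)$ recursion layers gives $C_d = d^{\O(d^3)}$, matching the stated bound; the base case $d=2$ is handled by Valtr's perturbed-Horton construction (Theorem~\ref{thm:valtr-upper}).

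The main obstacle is extracting a genuine $(d-1)$-dimensional hole from the deep-below splitting in higher dimensions. In two dimensions the Horton argument reduces to a clean statement about a convex polygon straddling two horizontal strips, but in dimension $d \geq 3$ the lower hull of $Y_1$ and the upper hull of $Y_0$ interact in far more intricate ways with the facet structure of $\conv(Y)$, and one has to rule out arrangements where the hole weaves between the two halves in a way that fails to generate witness points in the interior. Managing this facet-by-facet geometric bookkeeping, while simultaneously keeping the perturbation scales small enough to satisfy $\dist(P_{\vec{x}},\vec{x})\leq \eps$ and large enough to enforce general position, is the technical heart of the theorem, and it is what drives the super-exponential dependence of $C_d$ on $d$.
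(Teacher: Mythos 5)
First, a point of scope: the paper does not prove this statement at all --- it is quoted from Conlon and Lim~\cite{CL21}, and the present paper only sketches the planar construction (rows/columns perturbed into Horton sets, i.e.\ a Minkowski sum of Horton-like sets) and builds its own $d=3$ analogue $\Phi$ in Section~\ref{sec:upper} for a different purpose (bounding convex-position subsets, not hole-freeness). So your proposal has to stand on its own, and as it stands it has a genuine gap. Your construction step is in the right spirit (binary-expansion perturbations $(N)_\eta$ at geometrically separated scales, one for each ordered coordinate pair, exactly as in~\eqref{eq:Phi}), but the hole-freeness argument rests on a ``recursive splitting property'' that is false for this construction: the two lattice halves on either side of an axis-parallel dyadic hyperplane are full-dimensional blocks at unit spacing, and no perturbation of size $\O(\eps)$ can place one of them below \emph{every} hyperplane spanned by $d$ points of the other, since such hyperplanes can be taken nearly parallel to the perturbation direction. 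The deep-below/high-above structure in these grids lives only inside lower-dimensional fibers (single lattice lines, or lines after projection), which is precisely how the paper uses it in Lemmas~\ref{lem:edge} and~\ref{lem:face}; it does not hold for halves of the whole grid, so the pigeonhole step that is supposed to split a putative hole $Y$ into $Y_0$ deep below $Y_1$ does not get off the ground.

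Beyond that, the step you yourself flag as the ``main obstacle'' --- converting emptiness of $\conv(Y)$ plus a deep-below relation into an empty convex configuration in a $(d-1)$-dimensional slab of the perturbed lattice, so as to invoke $C_{d-1}$ --- is exactly the technical heart of Conlon and Lim's theorem, and your proposal asserts it rather than proves it. In the plane the Horton argument needs the quantitative ``closedness from above/below'' of the two halves, and it is well known that the naive dimensional induction does not carry this over; Conlon and Lim's bound $C_d=d^{\O(d^3)}$ comes out of their specific multi-layer analysis, not from a generic accounting of $\O(d^2)$ recursion layers as you suggest. (A smaller slip: Theorem~\ref{thm:valtr-upper} as stated in this paper is about the size of convex subsets, not about $7$-hole-freeness, so even your base case needs the hole-freeness statement from~\cite{Va92} rather than the theorem you cite.) In short, the construction matches the cited one, but the proof of $C_d$-hole-freeness is missing its central argument and the splitting lemma it is built on is not true as formulated.
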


Both the \emph{Valtr grid}~\cite{Va92} and the \emph{Conlon-Lim grid}~\cite{CL21}
can be described in terms of negligible perturbations (or negligible functions).
Given two finite sets $A,B \subset \RR^d$, a bijection $f \colon A \to B$ is \emph{negligible} if,
whenever $S \subset A$ and $x \in A$ with $x \in \interior( \conv(S))$, then $f(x) \in \interior (\conv(f(S)))$.

The planar construction can be described in one paragraph as follows (we mainly follow here the outline
of Conlon and Lim~\cite{CL21}). Starting with the lattice section $S_0=[n]^2$, each column of points
is shifted vertically by a small positive amount (\ie, negligible perturbation), resulting in a set $S_1$.
The shifting is done in such a way that each row of points is a Horton set. This also implies that
any nonvertical line of lattice points yields a subset of $S_1$ that is a Horton set.
Finally, each row of points is shifted horizontally by an even smaller positive amount
to get $S_2=S$ so that each column of points is a  Horton set. Overall, every line
of lattice points in $S_0$ corresponds to a Horton set in $S_2$. In other words, $S$ is the Minkowski sum of two
Horton sets, one of them resembling $[n]$ along the $x$-axis and the other along the $y$-axis.
It is easy to adjust the two perturbations if needed so that the final set $S$ is in strongly general position.
It is worth noting that the final set $S$ is \emph{not} a Horton set by itself.

\smallskip\noindent\textbf{Number of vertices and faces.}
The number of vertices of a convex lattice polytope can be bounded from above
using a theorem by Andrews~\cite{And63}.

\begin{theorem} {\rm (Andrews~\cite{And63}; see also~\cite{BP92,BV92,Sch85})} \label{thm:Andrews}
For every finite set $S\subset \ZZ^d$, $d\geq 2$, the lattice polytope $\conv(S)$ has
$\O_d \left(V^{\frac{d-1}{d+1}}\right)$ vertices, where $V=\vol(\conv(S))>0$.
\end{theorem}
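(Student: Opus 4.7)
The plan is to prove Andrews' bound via a Macbeath-cap packing argument, which treats all dimensions uniformly and yields the sharp exponent $(d-1)/(d+1)$. The strategy has two ingredients: first, attach to every vertex of $P=\conv(S)$ a convex region inside $P$ so that these regions are essentially disjoint; second, prove a uniform lower bound on the volume of each such region using the lattice structure.

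To each vertex $v$ of $P$ I would associate a Macbeath-type cap $M(v)\subseteq P$, defined as the intersection of $P$ with a halfspace having $v$ on its boundary (for instance, obtained by translating a supporting hyperplane at $v$ toward the centroid of $P$ by a carefully tuned amount). Standard Macbeath-type lemmas ensure that, after shrinking each $M(v)$ about $v$ by a constant factor depending only on $d$, the resulting family is pairwise interior-disjoint inside $P$, so
\[
\sum_{v \text{ vertex of } P} \vol(M(v)) = \O_d(V).
\]
The heart of the argument is a uniform lower bound $\vol(M(v)) \geq c_d\, V^{2/(d+1)}$ for every vertex $v$, with $c_d>0$ depending only on $d$. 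Combining the two facts immediately yields $N = \O_d(V/V^{2/(d+1)}) = \O_d(V^{(d-1)/(d+1)})$, which is the claimed bound.

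To establish the volume lower bound on $M(v)$, I would apply a volume-preserving affine transformation placing the John ellipsoid of $P$ into isotropic position, so that $P$ is sandwiched between concentric balls of radii $\Theta(V^{1/d})$ and $\Theta(V^{1/d}/d)$. Under this map, $\ZZ^d$ transforms into a lattice $\Lambda$ of unit covolume, and $v$ together with any $d$ lattice neighbors in $\Lambda\cap M(v)$ spans a simplex of volume at least $1/d!$. A quantitative use of Minkowski's second theorem, balancing the successive minima $\lambda_1(\Lambda)\leq\cdots\leq\lambda_d(\Lambda)$ against $\vol(P)$, then furnishes the exponent $2/(d+1)$ and hence the desired lower bound on $\vol(M(v))$.

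The main obstacle is precisely this cap volume lower bound with the correct exponent. A naive induction on the dimension — projecting $P$ onto a coordinate hyperplane and applying the $(d-1)$-dimensional bound to the shadow, whose volume is only $\O(V^{(d-1)/d})$ — would propagate the exponent multiplicatively and yield the strictly weaker estimate $N = \O(V^{(d-1)(d-2)/d^2})$. The sharp exponent $(d-1)/(d+1)$ requires the global affine-isotropic normalization described above, which treats all directions symmetrically and exploits the full strength of Minkowski's theorem; making this argument quantitative and dimension-uniform is the technical heart of the proof, and is where the dimension dependence hidden in $\O_d(\cdot)$ enters.
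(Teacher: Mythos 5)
The paper does not prove Theorem~\ref{thm:Andrews} at all: it is quoted as a classical result of Andrews (with alternative proofs in \cite{BP92,BV92,Sch85,KS84}), so your sketch must be judged on its own, and it has a genuine gap at exactly the point you call the heart of the argument. Your two ingredients --- (i) caps attached to \emph{every} vertex become pairwise interior-disjoint after shrinking about their vertices by a constant factor, and (ii) every such cap has volume at least $c_d V^{2/(d+1)}$ --- are together essentially a restatement of the theorem, and neither is established. Claim (i) is not a ``standard Macbeath-type lemma'': the Macbeath region centered at a vertex $v$ is the single point $\{v\}$ (since $(P-v)\cap(v-P)=\{0\}$ when $v$ is a vertex), so the usual engulfing/disjointness machinery attaches nothing useful to vertices; and for general halfspace caps chosen at all vertices the claim is simply false without using the lattice, which your step (i) never does. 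For instance, in a convex polygon of area $1$ with $N$ vertices one can choose at each vertex the cap cut by a line through the centroid, of area at least $1/2$; no shrinking by a factor depending only on $d$ can make $N$ such sets pairwise disjoint once $N$ is large, so a disjointness statement of this kind can only hold for a very carefully constructed family of caps, and constructing it is the whole difficulty. Claim (ii) is equally unsupported: mapping the John ellipsoid to a ball and invoking Minkowski's second theorem (which concerns symmetric bodies and successive minima) is not a derivation of a per-vertex cap volume bound with exponent $2/(d+1)$, and in the extremal examples (the integer hull of a ball, which shows Andrews' bound is tight) the cap naturally associated with a vertex --- the lattice-point-free cap cut by a supporting hyperplane --- has volume far below $V^{2/(d+1)}$; any caps satisfying (ii) must reach deep into $P$, and then their pairwise disjointness in (i) becomes a delicate global packing assertion (in the plane, unshrunk disjoint caps of area $\Omega(A^{2/3})$ each consume a boundary arc of length $\Omega(A^{7/12})$-scale, which is already impossible for the $\Theta(A^{1/3})$ vertices of the extremal polygon, so disjointness could at best survive for the shrunk homothets, and you give no argument for that).

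In short, the proposal assumes the crux rather than proving it. The known arguments do substantially more work: Andrews' original proof, the proofs of Konyagin--Sevastyanov and Schmidt, and the cap-covering proof in B\'ar\'any's survey \cite{Bar08} all replace your pointwise claim by a global statement --- e.g.\ the economic cap covering theorem for the ``wet part'' of $P$, combined with the fact that each vertex lies in a lattice-point-free cap and that lattice simplices have volume at least $1/d!$ --- and it is precisely there that the lattice hypothesis and the exponent $\frac{d-1}{d+1}$ enter. If you want to salvage your outline, you should aim at that structure (cover the union of the vertex caps by few caps of controlled volume, rather than lower-bounding and packing a cap at every single vertex), or else give an actual construction of the caps in (i)--(ii) and a proof of their disjointness, which at present is missing.
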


The same bound holds for the total number of faces 
of the lattice polytope $\conv(S)$.
\begin{theorem} {\rm (\cite[Theorem~3.2]{Bar08} see also~\cite{BL98,KS84})} \label{thm:Andrews+}
  For every finite set $S\subset \ZZ^d$, $d\geq 2$, the lattice polytope $\conv(S)$ has
$\O_d \left(V^{\frac{d-1}{d+1}}\right)$ faces (of any dimension), where $V=\vol(\conv(S))>0$.
\end{theorem}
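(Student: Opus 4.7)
The vertex bound is already provided by Theorem~\ref{thm:Andrews}, so it suffices to bound the number of faces of dimension $1,\ldots,d-1$. I would proceed in two stages.

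First, I would establish the analogous bound for facets: $f_{d-1}(\conv(S))=\O_d\bigl(V^{(d-1)/(d+1)}\bigr)$. This is a ``dual'' of Andrews' theorem and can be proved by adapting Andrews' shadow/packing argument from lattice vertices to primitive lattice hyperplane normals. Each facet is supported by a hyperplane $\{x\in\RR^d:\langle\vec v,x\rangle=c\}$ with $\vec v$ primitive; the facets that share a fixed normal direction contribute to the lattice width of $P=\conv(S)$ in that direction, and summing over directions with the volume constraint $V=\vol(P)$ produces the claimed exponent. Equivalently, after translating an interior lattice point of $P$ to the origin, one may pass to a polar-dual rational polytope whose vertices correspond to the facets of $P$ and then apply the vertex bound there, as in~\cite{KS84,BL98}.

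Second, for intermediate dimensions $1\le k\le d-2$, I would induct on $d$. Every such face lies on some facet $F$, and $F$ is itself a $(d-1)$-dimensional lattice polytope of $(d-1)$-volume $V_F$ in the sublattice spanned by its affine hull; the inductive hypothesis applied to $F$ gives $\O_d\bigl(V_F^{(d-2)/d}\bigr)$ faces per facet. A plain summation over facets using H\"older's inequality together with the surface-area estimate $\sum_F V_F=\O_d\bigl(V^{(d-1)/d}\bigr)$ turns out to be too crude -- the exponents do not balance and one overshoots $(d-1)/(d+1)$. The sharper path, followed in~\cite[Theorem~3.2]{Bar08}, is to replay Andrews' extremal counting dimension by dimension: for each candidate $k$-face one associates a lattice simplex contained in $P$ whose volume is bounded below in terms of the face's own lattice complexity, and a packing argument inside $P$ then recovers the target bound for every $k$ separately.

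The main obstacle is precisely this intermediate-dimensional count, because the naive facet-based induction loses the exponent and must be replaced by a direct per-dimension packing argument as in~\cite{Bar08}. Once both the facet bound and the per-dimension packing bound are available, summing over $k\in\{0,1,\ldots,d-1\}$ (only a dimension-dependent constant number of terms) yields the claimed total of $\O_d\bigl(V^{(d-1)/(d+1)}\bigr)$ faces.
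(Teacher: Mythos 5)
This statement is background that the paper itself does not prove: Theorem~\ref{thm:Andrews+} is quoted verbatim from \cite[Theorem~3.2]{Bar08} (see also \cite{BL98,KS84}), so there is no in-house argument to compare yours against. It is worth noting that in the only dimension where the paper needs a face bound, namely $d=3$, it does not even invoke this theorem in full strength: in Section~\ref{sec:upper} the $\O(n^{1/2})$ bound on edges and facets is deduced from the vertex bound of Theorem~\ref{thm:Andrews} together with Euler's polyhedral formula, and for $d=2$ the statement is trivial from the vertex bound. The genuine content of Theorem~\ref{thm:Andrews+} is the case $d\geq 4$, where the Upper Bound Theorem applied to $\O_d(V^{(d-1)/(d+1)})$ vertices is far too weak, and that is exactly the case your plan does not actually handle.

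Judged as a self-contained proof plan, the proposal has two concrete gaps. First, the facet bound via polar duality does not work as stated: a full-dimensional lattice polytope need not contain an interior lattice point at all (e.g.\ $\conv\{\mathbf{0},\,N\mathbf{e}_1,\,N\mathbf{e}_2,\,\mathbf{e}_3\}$ has lattice width one and arbitrarily large volume), and even when one exists the polar body is a rational, not a lattice, polytope whose volume scales like $1/V$, so Theorem~\ref{thm:Andrews} cannot simply be ``applied there''; a correct facet count requires a genuine argument (via primitive facet normals, as in \cite{BL98,KS84}), not a one-line dualization. Second, and more importantly, for the intermediate-dimensional faces you correctly verify that the facet-by-facet induction with H\"older overshoots the exponent $(d-1)/(d+1)$, but your proposed repair is a single sentence deferring to ``Andrews-type packing as in \cite{Bar08}'', with no specification of which lattice simplex is attached to a $k$-face, why its volume is bounded below, or how the packing inside $\conv(S)$ is organized. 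That step is the entire theorem for $d\geq 4$, so what you have is a reasonable reading guide to \cite{Bar08}, not a proof. If your goal is only what the present paper uses, the short route is the one the authors take: Theorem~\ref{thm:Andrews} plus Euler's formula in $\RR^3$.
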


We generalize Theorem~\ref{thm:Andrews} in the plane in another direction, as follows.
This technical tool will be used in Section~\ref{sec:upper}.

\begin{lemma}\label{lem:new2}
For every finite set $S\subset \ZZ^2$ and for every integer $t$, $1\leq t\leq \sqrt{A}$,
the lattice polygon $\conv(S)$ has $\O\left((A/t^2)^{1/3}\right)$
edges that contain more than $t$ points in $\ZZ^2$, where $A=\area(\conv(S))>0$.
\end{lemma}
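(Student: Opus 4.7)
The plan is to combine Andrews' theorem (Theorem~\ref{thm:Andrews}) with a shoelace identity applied to a convex lens built from primitive edge directions. Let $m$ denote the number of edges of $\conv(S)$ containing more than $t$ lattice points. Any such edge has edge vector of the form $n\vec{v}$ with $\vec{v}\in\ZZ^2$ primitive and the integer $n\ge t$, and distinct edges of a convex polygon have distinct primitive directions. At most four rich edges are axis-aligned, so by pigeonhole some open quadrant---say the first---contains $m'=\Omega(m)$ rich edges. I label these $\vec{e}_1,\ldots,\vec{e}_{m'}$ by increasing polar angle (the order in which they appear along the NE chain of $\conv(S)$) and write $\vec{e}_j=n_j\vec{v}_j$ with $n_j\ge t$; the primitive directions $\vec{v}_j$ are then distinct lattice vectors in the open first quadrant with strictly increasing polar angles.

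I would then introduce the primitive lens $L$: the polygon obtained by walking $\vec{v}_1,\ldots,\vec{v}_{m'}$ from the origin and closing by a single chord back to the origin. Because the chord direction $-\sum_j\vec{v}_j$ lies in the third quadrant, $L$ is a convex lattice polygon with $m'+1$ vertices and pairwise distinct edge directions, so Theorem~\ref{thm:Andrews} gives $\area(L)\ge c\,(m')^3$ for an absolute constant $c>0$, provided $m'$ exceeds a small constant (otherwise $m=O(1)$ and the claim is trivial since the hypothesis $t\le\sqrt{A}$ forces $(A/t^2)^{1/3}\ge 1$). To couple $\area(L)$ to $A$, let $A_{NE}$ be the area of the convex lens contained in $\conv(S)$ that is bounded by the entire NE chain and the chord joining its two endpoints; by convexity $A_{NE}\le A$. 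If $\vec{f}_1,\ldots,\vec{f}_N$ lists the NE chain edges sorted by polar angle, the shoelace formula yields $A_{NE}=\tfrac12\sum_{k<j}\vec{f}_k\times\vec{f}_j$, and every cross product is nonnegative because the $\vec{f}_k$ lie in the open first quadrant with strictly increasing angles. Restricting the sum to pairs of rich edges and using $n_k n_j\ge t^2$ produces
\[
A_{NE}\;\ge\;\tfrac{t^2}{2}\sum_{\substack{k<j\\ \text{both rich}}}\vec{v}_k\times\vec{v}_j\;=\;t^2\,\area(L)\;\ge\;c\,t^2\,(m')^3,
\]
and consequently $m=O\!\left((A/t^2)^{1/3}\right)$.

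The step I expect to require the most care is verifying that the primitive lens $L$ really is a convex lattice polygon with all edge directions pairwise distinct---so that Theorem~\ref{thm:Andrews} applies cleanly---and that the cross products $\vec{f}_k\times\vec{f}_j$ all carry the same sign along the NE chain, so that discarding non-rich pairs can only decrease $A_{NE}$. Both points are routine once the primitive vectors are ordered by polar angle, but they are the geometric core of the argument.
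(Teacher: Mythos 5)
Your argument is correct, and at the top level it mirrors the paper's proof (both reduce the rich edges to an auxiliary convex lattice polygon and invoke Andrews' theorem), but the execution is genuinely different. The paper splits $\partial\conv(S)$ into the upper and lower $x$-monotone arcs rather than direction quadrants, and for each arc it truncates every edge vector $(m_i-1)\vec{e}_i$ to $m_i'\vec{e}_i$, where $m_i'$ is the largest multiple of $t$ with $0\le m_i'\le m_i-1$ (so non-rich edges collapse to zero); it then observes that shrinking edge lengths along a convex chain can only decrease the area, notes that the truncated chain lies in $t\,\ZZ^2$, rescales by $1/t$ to obtain a lattice polygon of area at most $A/t^2$, and applies Andrews' theorem in its stated direction to bound the number of its vertices, hence the number of rich edges, by $\O\left((A/t^2)^{1/3}\right)$. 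You instead keep only the primitive vectors, apply Andrews in the contrapositive as the lower bound $\area(L)=\Omega\left((m')^3\right)$ for your lens, and recover the factor $t^2$ termwise in the shoelace expansion via $n_k n_j\ge t^2$; note that the paper's area-monotonicity step is secretly the same nonnegative-cross-product fact you make explicit. What each buys: the paper's rounding-and-scaling trick lands directly in $\ZZ^2$ and avoids the bilinear expansion and the quadrant/axis-aligned bookkeeping, while your version makes the area accounting transparent and shows exactly where the $t^2$ comes from; both handle the degenerate small cases and absorb the additive $\O(1)$ correctly using $t\le\sqrt{A}$.
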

\begin{proof}
  For brevity, let $P=\conv(S)$. The boundary of $P$ can be decomposed into two $x$-monotone polygonal chains,
  a lower arc $P_1$ and an upper arc $P_2$. It suffices to show that $P_1$ and $P_2$ each have $\O((A/t^2)^{1/3})$
  edges that contain more than $t$ grid points.

Consider $P_1$ (the argument for $P_2$ is analogous and omitted).
Suppose that $P_1$ has $L_1$ edges that contain more than $t$ grid points.
Denote the vertex set of $P_1$ as $S_1=\{v_0,v_1,\ldots , v_k\}$,
labeled in left-to-right order, and assume that $v_0$ is the origin.
Note that $S_1\subset S$, and so $\area(\conv(S_1)) \leq \area(\conv(S))=A$.
For $i=1,\ldots ,k$, if the edge $v_{i-1}v_i$ contains $m_i$ grid points,
then $v_i-v_{i-1}=(m_i-1)\vec{e}_i$ for a primitive vector $\vec{e}_i$.
For every $i=1,\ldots, k$, let $m_i'$ be the largest multiple of $t$ such that $0\leq m_i'\leq m_i-1$.
Note that if $m_i \leq t$ then $m'_i =0$.

Let $S'_1$ be the vertex set of the polygonal arc obtained by concatenating the vectors
$m_1' \vec{e}_1,\ldots , m_k'\vec{e}_k$.
Since $m_i' \leq m_i$ for all $i$, then $\area(\conv(S_1')) \leq \area(\conv(S_1)) \leq A$.
By construction, we have $S_1' \subset t\,\ZZ^2$,
hence $S''_1: = \frac{1}{t}\, S_1'\subset \ZZ^2$.
Thus $\conv(S_1'')$ is a lattice polygon of area at most $A/t^2$. 
If $\conv(S_1'')$ has positive area, then Theorem~\ref{thm:Andrews} applies, otherwise $L_1\leq 2$.
In both cases, $\conv(S_1'')$ has $L_1 =\O((A/t^2)^{1/3})$ vertices, as required.
\end{proof}

\section{Lower bound: Proof of Theorem~\ref{thm:lower}}   \label{sec:lower}

Let $A$ be a set of $n$ points in $\RR^3$ in general position, satisfying $\Delta(A) \leq \alpha n^{1/3}$.
We may assume that $A \subset B$, where $B$ is a ball of radius $R= \alpha n^{1/3}$ centered at $O$;
see Fig.~\ref{fig:cap}\,(left). Consider any maximal packing $\P$ of spherical caps of height $h=\alpha n^{-1/6}$
and radius $r=(R^2-(R-h)^2)^{1/2}=\Theta(\alpha n^{1/12})$. For instance, $\P$ may be constructed greedily
from an empty packing by successively adding spherical caps that do not intersect previous caps.

\begin{figure}[htbp]
\centering
\includegraphics[width=.7\textwidth]{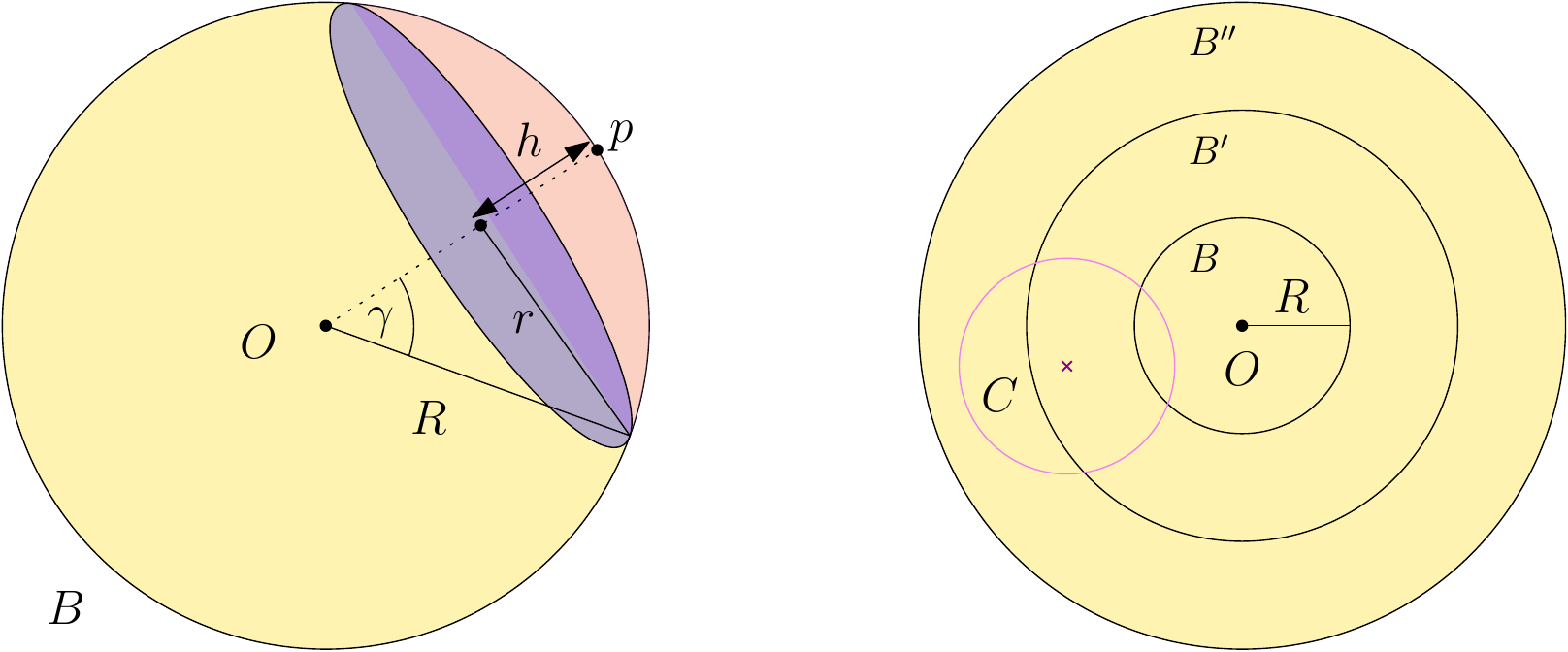}
\caption{Left: Elements of a spherical cap: the radius of the sphere $R$, the radius of the base of the cap $r$,
  the center $p$ of the cap, the height of the cap $h$, and the polar angle $\gamma$.
Right: Concentric balls and the placement of $C$.}
\label{fig:cap}
\end{figure}

\begin{lemma} \label{lem:packing}
  The spherical caps in $\P$ cover at least one quarter of the surface area of $B$.
\end{lemma}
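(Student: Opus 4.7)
The plan is to use the classical doubling argument for maximal sphere packings. First, reparametrize a cap by its polar angle $\gamma$, where $\cos\gamma = (R-h)/R$. The surface area on $\partial B$ of such a cap equals $2\pi R h = 2\pi R^2(1-\cos\gamma)$. Two caps of polar angle $\gamma$ with apexes $p_1, p_2 \in \partial B$ at angular separation $\theta$ have disjoint interiors if and only if $\theta \geq 2\gamma$; this reduces to writing each cap as $\{x \in B : \langle x, \hat p_i\rangle \geq R-h\}$, and observing that a common point would force the component of $x$ along the bisector direction $(\hat p_1 + \hat p_2)/(2\cos(\theta/2))$ to be at least $R$, which is impossible inside the ball when $\theta < 2\gamma$.

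Next, I would invoke maximality of $\P$. Suppose $x \in \partial B$ had angular distance strictly greater than $2\gamma$ from every apex $p_i$ of caps in $\P$; then by the condition above, a new cap of polar angle $\gamma$ centered at $x$ would be disjoint from all caps in $\P$, contradicting maximality. Therefore the caps of polar angle $2\gamma$ about the apexes of $\P$ cover the entire sphere $\partial B$.

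The final step is a straightforward surface-area comparison. The area of a cap of polar angle $2\gamma$ equals $2\pi R^2(1-\cos 2\gamma) = 4\pi R^2 \sin^2\gamma$, and the ratio to the area of a cap of polar angle $\gamma$ is
\[
\frac{1-\cos 2\gamma}{1-\cos\gamma} \;=\; \frac{2(1-\cos^2\gamma)}{1-\cos\gamma} \;=\; 2(1+\cos\gamma) \;\leq\; 4.
\]
Summing over $\P$ and using that the doubled caps cover $\partial B$ while the original caps are disjoint, we obtain
\[
\area(\partial B) \;\leq\; \sum_{p \in \P}\area(C_p^{2\gamma}) \;\leq\; 4\sum_{p \in \P}\area(C_p^{\gamma}) \;=\; 4\cdot\area\!\left(\bigcup_{p\in\P} C_p^{\gamma}\right),
\]
which gives the claimed $\tfrac14$ lower bound on the covered fraction.

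I do not expect a serious obstacle: the only nontrivial point is that the packing is of three-dimensional caps (not merely spherical disks), so one has to be careful that the disjointness condition for 3D caps coincides with the spherical-disk condition $\theta \geq 2\gamma$. The short convexity argument sketched above handles this, and the rest is bookkeeping with $1-\cos\gamma$ and $1-\cos 2\gamma$. Note that the bound $2(1+\cos\gamma) \leq 4$ is valid for every $\gamma$, so no smallness assumption on $h/R$ is needed, and the constant $\tfrac14$ is genuine rather than asymptotic.
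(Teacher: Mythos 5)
Your proof is correct and follows essentially the same route as the paper: maximality forces the caps of doubled polar angle to cover $\partial B$, and doubling the polar angle at most quadruples the cap area, giving the factor $\tfrac14$. You simply spell out the details the paper leaves implicit (the explicit ratio $2(1+\cos\gamma)\leq 4$ and the disjointness criterion $\theta\geq 2\gamma$ for the three-dimensional caps), apart from a harmless slip of the inequality direction ("$\theta<2\gamma$" should read "$\theta>2\gamma$") in your disjointness sketch.
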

\begin{proof}
  Consider the set $\P'$ of enlarged caps with the same centers as in $\P$ but with double polar angle.
  Due to the maximality of $\P$, the caps in $\P'$ cover the surface $\partial B$ of $B$.
  Indeed, if there is an uncovered point $p \in \partial B$, then the cap of spherical radius $r$
  centered at $p$ would not intersect any cap in $\P$, contradicting the maximality of $\P$.
  Doubling the polar angle of a spherical cap increases its area by a factor of at most 4.
  Consequently, the surface area of the union of spherical caps in $\P'$ is
  less than four times the surface area covered by $\P$.
  Since $\P'$ covers $\partial B$, then the caps in $\P$
  cover at least one quarter of the surface area of $B$.
\end{proof}

Since the area of $\partial B$ is $4 \pi R^2$ and the area of each spherical cap in $\P$ is $2 \pi R h$,
the cardinality of $\P$ is bounded from below (by Lemma~\ref{lem:packing}) as follows:
\begin{equation} \label{eq:P}
   |\P|  \geq \frac{4 \pi R^2}{4 \cdot 2 \pi R h} = \frac{R}{2h} = \frac{n^{1/2}}{2}.
\end{equation}
Label these caps as $\Psi_i$, $i=1,\ldots, |\P|$, in an arbitrary fashion.
The following is analogous with Lemma~2.2 in~\cite{Va92}, however,
due to the specifics of our construction, its proof is obvious.

\begin{lemma} \label{lem:convex}
  If we choose at most one point from each of the spherical caps $\Psi_i$, $i=1,\ldots, |\P|$, then the chosen
  points are in convex position.
\end{lemma}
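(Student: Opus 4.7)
The plan is the standard supporting-hyperplane argument applied to each cap individually. For each $i$, let $\vec{u}_i$ denote the outward unit vector from $O$ to the center of $\Psi_i$ on $\partial B$, so that $\Psi_i = B\cap \{x : x\cdot \vec{u}_i \geq R-h\}$ and its flat face lies in the plane $H_i=\{x : x\cdot \vec{u}_i = R-h\}$. I would first show that $H_i$ separates $\Psi_i$ from every other cap $\Psi_j$, and then conclude that each chosen point is extremal in the convex hull of all the chosen points.

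The separation claim is a short trigonometric exercise. Writing $\gamma$ for the polar half-angle of the caps, so that $\cos\gamma = (R-h)/R$, the packing condition forces the angular distance $\theta_{ij}$ between $\vec{u}_i$ and $\vec{u}_j$ to satisfy $\theta_{ij}\geq 2\gamma$ (otherwise the two caps would overlap on $\partial B$). Projecting onto the plane spanned by $\vec{u}_i$ and $\vec{u}_j$ reduces the bound to planar trigonometry: the maximum of $x\cdot \vec{u}_i$ over $x\in \Psi_j$ is attained on the boundary of $\Psi_j$ and equals $R\cos(\theta_{ij}-\gamma)$, which is at most $R\cos\gamma = R-h$ whenever $\theta_{ij}\geq 2\gamma$. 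Hence $\Psi_j$ lies in the closed half-space $\{x : x\cdot \vec{u}_i \leq R-h\}$.

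Given the separation, the conclusion is immediate. By a negligible perturbation of the packing, which does not affect its cardinality or the lower bound~\eqref{eq:P}, I may assume that no point of $A$ lies on any flat face $H_i$, so any chosen point $p_i\in A\cap \Psi_i$ satisfies $p_i\cdot \vec{u}_i > R-h$, while every other chosen point $p_j$ with $j\neq i$ satisfies $p_j\cdot \vec{u}_i \leq R-h$. Consequently $H_i$ strictly separates $p_i$ from $\conv(\{p_j : j\neq i\})$, so $p_i$ is not in this convex hull. Applying this to every $i$ shows that all chosen points are extreme points of their joint convex hull, i.e., they are in convex position. I do not expect a real obstacle here, since the author themselves calls the proof obvious; the only subtlety is the boundary case $\theta_{ij}=2\gamma$, which is handled by the above perturbation (or, alternatively, by working with open caps throughout).
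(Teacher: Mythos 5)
Your supporting-plane argument is essentially the intended justification: the paper offers no written proof (it declares the lemma obvious, the point being that the base plane of each cap has every other cap of the packing on its far side), and your trigonometric verification of that separation, together with extremality of each chosen point, is correct. The only quibble is the degenerate case of a chosen point lying exactly on its own base plane: rather than perturbing the packing (which alters the caps and hence the hypothesis of the lemma), it is cleaner to observe that strict separation could only fail if three chosen points were collinear or four coplanar, contradicting the general position of $A$.
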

We wish to find a suitable placement of a ball $C$ congruent to $B$ (in a rotated position)
so that a constant fraction of its spherical caps in $\P$ contains some points from $A$.
This would prove Theorem~\ref{thm:lower}.
As in Valtr's work, we proceed by probabilistic methods; the resulting algorithm is randomized.

Let $\S$ be the (infinite) set of all spherical caps of radius $r$ and height $h$ corresponding to all balls $C$
congruent to $B$ contained in a ball $B''$ concentric with $B$ of radius $3R$ centered at $O$;
note that any such ball $C$ is centered in a ball $B'$ concentric with $B$ of radius $2R$ centered at $O$.
Refer to Fig.~\ref{fig:cap}\,(right).
A spherical cap in $\S$ is determined by two parameters:
(1) the \emph{direction}, which is a unit vector in $\mathbb{S}^2$ parallel to the vector
from the center of the corresponding ball $C$ to the center of the cap; and
(2) the center of the corresponding ball $C$, which is in $B'$.
Consequently, $\S$ is parameterized by the Cartesian product $\mathbb{S}^2\times B'$; and
the Lebesgue measures on $B'$ and $\mathbb{S}^2$, resp., induce a product measure $\mu$ on $\S$.
Since we have $\vol(B')= \frac{4 \pi}{3} (2R)^3= \frac{32}{3}\pi R^3$ and $\area(\mathbb{S}^2)=4 \pi^2$,
then $\mu(\S)$ has a simple formula:
\begin{equation} \label{eq:mu}
\mu(\S)= \frac{128}{3}  \pi^3 R^3.
\end{equation}

A key lemma (analogous to Lemma~2.3 in~\cite{Va92}) is the following.

\begin{lemma} \label{lem:key}
There exists a positive constant $c=c(\alpha)$ such that
\[ \mu(\{S \in \S \colon S \cap A \neq \emptyset\}) \geq c \mu(\S).  \]
\end{lemma}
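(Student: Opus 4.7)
The plan is to deduce the lemma from Cauchy--Schwarz applied to $N(S)=|S\cap A|$, which yields
\[
\mu(\{S\in\S : N(S)\geq 1\}) \;\geq\; \frac{\bigl(\int_{\S} N\,d\mu\bigr)^{2}}{\int_{\S} N^{2}\,d\mu}
\,=\, \frac{\bigl(\sum_{a\in A}\mu_a\bigr)^{2}}{\sum_{a,b\in A}\mu_{ab}},
\]
where $\mu_a=\mu(\{S\in\S : a\in S\})$ and $\mu_{ab}=\mu(\{S\in\S : a,b\in S\})$. It will suffice to show $\sum_a\mu_a = \Theta(\mu(\S))$ and $\sum_{a,b}\mu_{ab} = O(\alpha^{3})\cdot\sum_a\mu_a$, since together these give $\mu(\{N\geq 1\}) = \Omega(\mu(\S)/\alpha^{3})$, which is the desired bound with $c=c(\alpha)$.

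For the first moment I would fix a direction $\vec u\in\mathbb{S}^{2}$ and denote by $K_{\vec u}$ the reference cap of height $h$ and radius $R$ placed at the origin with axis $\vec u$. The condition $a\in S(\vec u,c)$ is equivalent to $c\in a - K_{\vec u}$, and since $\|a\|\leq R$ and $K_{\vec u}$ lies in the ball of radius $R$ about the origin, the translate $a - K_{\vec u}$ lies in $B'$. Consequently, the inner $c$-integral equals $\vol(K_{\vec u}) = V_{\mathrm{cap}}$ exactly, and integrating over $\mathbb{S}^{2}$ gives $\mu_a = 4\pi V_{\mathrm{cap}}$, independent of $a$. Using the identity $nh^{2}R=R^{3}$ (immediate from $R=\alpha n^{1/3}$ and $h=\alpha n^{-1/6}$), this yields $\sum_a\mu_a = \Theta(nh^{2}R) = \Theta(R^{3}) = \Theta(\mu(\S))$.

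For the second moment I would again change variables in $c$ to obtain $\mu_{ab} = \int_{\mathbb{S}^{2}}\vol(K_{\vec u}\cap(K_{\vec u}+\delta))\,d\vec u$, with $\delta=b-a$. The lens $K_{\vec u}\cap(K_{\vec u}+\delta)$ sits inside a slab perpendicular to $\vec u$ of thickness $(h-|\vec u\cdot\delta|)^{+}$ whose cross-sections have area at most $\pi r^{2}$, so its volume is bounded by $\pi r^{2}(h-|\vec u\cdot\delta|)^{+}$. For $n$ large enough that $h<1$, every pair in $A$ satisfies $|\delta|\geq 1 > h$; integrating the slab bound over $\mathbb{S}^{2}$ by a band decomposition (the overlap directions form a belt of angular measure $\Theta(h/|\delta|)$ perpendicular to $\delta$) yields $\mu_{ab} = O(r^{2}h^{2}/|\delta|)$.

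To sum the $\mu_{ab}$, I would invoke the standard packing bound that, since $A$ has minimum pairwise distance at least $1$, the number of $b\in A$ with $\|b-a\|\in[k,k+1)$ is $O(k^{2})$ for any fixed $a$. Summing $k\cdot r^{2}h^{2}$ over $k=1,\ldots,\lceil 2r\rceil$ (a cap cannot contain two points farther apart than its diameter $2r$) gives $\sum_{b\neq a}\mu_{ab} = O(r^{4}h^{2}) = O(R^{2}h^{4})$, and since $Rh^{2}=\alpha^{3}$ is constant, this is $O(\alpha^{3})\cdot\mu_a$; therefore $\sum_{a,b}\mu_{ab} = O(\alpha^{3})\sum_a\mu_a$, completing the argument. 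The step I expect to be the main obstacle is the geometric upper bound on $\vol(K_{\vec u}\cap(K_{\vec u}+\delta))$ together with its angular integration; by contrast, the first moment follows from a single change of variables, and the $O(k^{2})$ neighbor-count is routine.
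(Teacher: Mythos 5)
Your proposal is correct and takes essentially the same route as the paper: a second-moment (Cauchy--Schwarz) argument in which the first moment is computed exactly because the translated cap of admissible centers lies inside $B'$, and the pairwise term $\mu_{ab}=O(r^2h^2/\dist(a,b))=O(h/\dist(a,b))\cdot\Theta(\alpha^3)$ is summed over unit-width spherical shells with the $O(j^2)$ packing bound, exactly as in Lemmas~\ref{lem:shell}--\ref{lem:mu}. Your slab bound on the lens $K_{\vec u}\cap(K_{\vec u}+\delta)$, supported on a belt of directions of measure $O(h/|\delta|)$, is just a compact substitute for the paper's Lemma~\ref{lem:locus}.
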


Its proof relies on two lemmas (analogous to Lemmas~2.4 and~2.5 in~\cite{Va92}).

\begin{lemma} \label{lem:shell}
  Let $a\in A$ and for every $j \in \NN$ let $A(j)$ be the set of points $a'\in A$ such that
  $j \leq \dist(a,a') \leq j+1$. Then $|A(j)|\leq c_1 j^2$, for some constant $c_1>0$.
\end{lemma}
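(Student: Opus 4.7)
The plan is a standard volume-packing argument in the shell around $a$, exploiting the normalization that the minimum pairwise distance in $A$ is at least $1$.

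First, I would recall that we have assumed $\min\{\dist(a,a') : a,a'\in A, a\neq a'\}=1$, so any two distinct points of $A$ are at Euclidean distance at least $1$. Define the shell
\[
\Sigma_j \;=\; \{x\in\RR^3 : j\le \dist(a,x)\le j+1\}.
\]
By definition, $A(j)\subset \Sigma_j$.

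Next, I would apply the classical packing trick: place an open ball $B(a',\tfrac12)$ of radius $\tfrac12$ around every point $a'\in A(j)$. Because any two distinct points of $A$ are at distance $\geq 1$ from each other, these balls are pairwise disjoint. Moreover, each such ball lies inside the slightly inflated shell
\[
\Sigma'_j \;=\; \bigl\{x\in\RR^3 : \max(0,j-\tfrac12)\le \dist(a,x)\le j+\tfrac32\bigr\}.
\]

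Finally, a direct volume comparison finishes the proof. A straightforward calculation gives
\[
\vol(\Sigma'_j) \;\le\; \tfrac{4\pi}{3}\bigl((j+\tfrac32)^3-(j-\tfrac12)^3\bigr) \;=\; O(j^2),
\]
while each small ball has volume $\tfrac{4\pi}{3}(\tfrac12)^3 = \pi/6$. Since the balls are disjoint and contained in $\Sigma'_j$,
\[
|A(j)| \;\le\; \frac{\vol(\Sigma'_j)}{\pi/6} \;=\; O(j^2),
\]
which yields the claimed bound with an absolute constant $c_1$. (For the very small values $j=0,1$, where the inner radius is zero and the shell degenerates to a ball of radius $\tfrac32$ or $\tfrac52$, the quantity is $O(1)$, which is absorbed into $c_1$.)

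There is essentially no serious obstacle here; the argument is a routine density/volume estimate once the minimum-distance normalization is invoked. The only mild care required is handling the boundary case of small $j$, which is dealt with by replacing the inner radius by $\max(0,j-\tfrac12)$ and noting that such cases contribute only a constant that can be absorbed into $c_1$.
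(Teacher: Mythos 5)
Your proof is correct. It rests on the same normalization the paper uses (minimum pairwise distance $1$) but implements the density estimate differently: you pack pairwise disjoint balls of radius $\tfrac12$ around the points of $A(j)$ inside a slightly inflated shell and compare volumes, whereas the paper partitions the shell $j\le \dist(a,x)\le j+1$ directly into $\O(j^2)$ cells of diameter less than $1$ (using the three concentric spheres of radii $j$, $j+\tfrac12$, $j+1$ and $\O(j^2)$ planes through $a$), so that each cell can contain at most one point of $A$. Both arguments are elementary and give the same $\O(j^2)$ bound; your volume comparison is perhaps the more automatic route (and generalizes verbatim to $\RR^d$ with bound $\O(j^{d-1})$), while the paper's cell decomposition avoids any volume computation for inflated shells. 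One tiny caveat: for $j=0$ the claimed bound $c_1 j^2$ is vacuously $0$, so no constant can be ``absorbed'' there; this is harmless because the lemma is only invoked for $j\ge 1$ (in the sum over shells in the proof of Lemma~\ref{lem:mu}), where your estimate is exactly what is needed.
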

\begin{proof}
  Draw $\O(j^2)$ planes incident to $a$ that divide the surface of the ball of radius $j+1$ centered at $a$ into
  $\O(j^2)$ surface patches of diameter at most $1/2$. The three concentric spheres of radii $j$, $j+1/2$, and $j+1$
  together with the $\O(j^2)$ planes partition the set $A(j)$ into $\O(j^2)$ $3$-dimensional cells of diameter
  less than $1$. As such, each cell contains at most one point from $A$ and the lemma follows.
\end{proof}

For every point $a \in A$, and every integer $i=0,1,\ldots,n$, we introduce the notation
\later{
$\S(a) = \{S \in \S \colon a \in S\}$,
$\S_i = \{S \in \S \colon |S \cap A|=i\}$, and
  $m_i = \mu(\S_i)$.
} 
%
\begin{align*}
  \S(a) &= \{S \in \S \colon a \in S\},\\
  \S_i &= \{S \in \S \colon |S \cap A|=i\}, \\
  m_i &= \mu(\S_i).
\end{align*}

\begin{lemma} \label{lem:locus}
  The locus of directions $\vec{v}\in \mathbb{S}^2$ corresponding to spherical caps in $\S(a) \cap \S(a')$
  is contained in a spherical ring of polar angle $\theta$, where
$\theta/2= \arcsin \frac{h}{\dist(a,a')}$; see Fig.~\ref{fig:locus3}.
\end{lemma}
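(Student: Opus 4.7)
The plan is to extract a single linear inequality on $\vec{v}$ from the hypothesis $a,a'\in S$ and then interpret it geometrically on $\mathbb{S}^2$; this is essentially a two-point version of the standard fact that a slab of width $h$ with normal $\vec{v}$ can contain both $a$ and $a'$ only when $\vec{v}$ is nearly perpendicular to $a-a'$.

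First, I would fix an arbitrary cap $S\in \S(a)\cap \S(a')$ with direction $\vec{v}$ and underlying ball $C$ of radius $R$ centered at some point $o\in B'$. A spherical cap of direction $\vec{v}$ and height $h$ is precisely the set of $p\in C$ with $(p-o)\cdot\vec{v}\ge R-h$, so applying the membership condition to both $a$ and $a'$ shows that $(a-o)\cdot \vec{v}$ and $(a'-o)\cdot \vec{v}$ both lie in the interval $[R-h,R]$ of length $h$. Subtracting, the center $o$ cancels and one obtains the central inequality
\[
|(a-a')\cdot \vec{v}|\le h.
\]
Crucially, this bound is independent of which ball $C$ realizes the cap, so it is a uniform constraint on every direction $\vec{v}$ appearing in $\S(a)\cap \S(a')$.

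Next, I would normalize by setting $\vec{u}=(a-a')/\dist(a,a')$, which turns the inequality into $|\vec{v}\cdot \vec{u}|\le h/\dist(a,a')$. Writing $\psi\in[-\pi/2,\pi/2]$ for the signed latitude of $\vec{v}$ with respect to the great circle orthogonal to $\vec{u}$, so that $\vec{v}\cdot\vec{u}=\sin\psi$, the bound reads $|\sin\psi|\le h/\dist(a,a')$, i.e.\ $|\psi|\le \arcsin\bigl(h/\dist(a,a')\bigr)$. The unit vectors satisfying this latitude restriction are exactly those in the spherical belt of total polar width $\theta=2\arcsin\bigl(h/\dist(a,a')\bigr)$ symmetric about the equator perpendicular to $\vec{u}$, which is the ring in the statement.

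There is no substantive obstacle: the proof is a one-line algebraic manipulation plus a translation into spherical coordinates. The only points worth verifying carefully are that $o$ really drops out of the subtraction (so the constraint depends only on $\vec{v}$, not on the choice of $C$) and that the quantity $h/\dist(a,a')$ lies in $[0,1]$ so that $\arcsin$ is well-defined; the latter holds whenever $a\ne a'$ and $\dist(a,a')\ge h$, and in the remaining case $\dist(a,a')<h$ the bound is vacuous and the locus is all of $\mathbb{S}^2$, trivially contained in any ring of polar angle $\pi$.
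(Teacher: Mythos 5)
Your proof is correct, and it takes a genuinely different route from the paper's. The paper argues via extremal configurations of the segment $aa'$ inside a cap, splitting into two cases according to whether $\dist(a,a')\le\sqrt{r^2+h^2}$ or $\sqrt{r^2+h^2}<\dist(a,a')\le 2r$: in the first case the extremal segment has $a'$ at the center of the cap and $a$ on its base, giving tilt $\arcsin\frac{h}{\dist(a,a')}$; in the second the extremal tilt is $\arcsin\frac{y}{\dist(a,a')}$ with $y\le h$ the distance from $a'$ to the base plane, which is then bounded by the same quantity. Your slab argument collapses both cases into a single algebraic step: membership of $a$ and $a'$ in a solid cap of direction $\vec{v}$ over a ball of radius $R$ centered at $o$ forces both $(a-o)\cdot\vec{v}$ and $(a'-o)\cdot\vec{v}$ into the interval $[R-h,\,R]$ (the upper bound coming from $\|a-o\|\le R$), so $|(a-a')\cdot\vec{v}|\le h$ independently of $o$, and the latitude bound $|\psi|\le\arcsin\frac{h}{\dist(a,a')}$ about the equator orthogonal to $a-a'$ follows at once. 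What your approach buys is the elimination of the case analysis and of the unverified appeal to ``extremal positions,'' together with an explicit note on the degenerate case $\dist(a,a')<h$ (vacuous in the paper's setting, where $h=\alpha n^{-1/6}$ and the minimum distance is $1$); what the paper's approach offers in exchange is a sharper geometric picture of when the bound degenerates (e.g., $\theta'=0$ when $\dist(a,a')=2r$), which is not needed for the containment claim. Either way the conclusion is exactly what is consumed downstream in Lemma~\ref{lem:mu}(ii).
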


\begin{figure}[htbp]
\centering
\includegraphics[width=.65\textwidth]{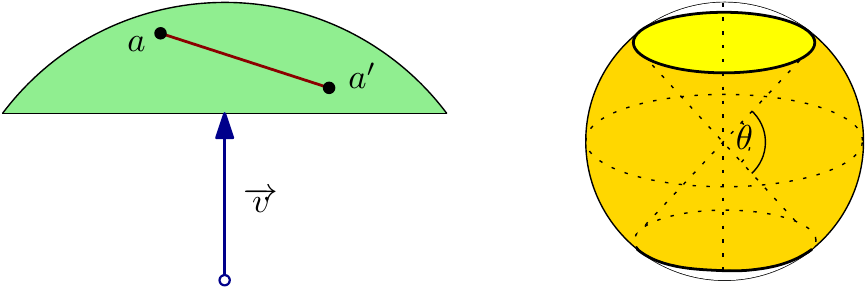}
\caption{A point pair $a,a'\in A$ in a spherical cap of direction $\vec{v}$;
and a spherical ring of angle $\theta$.}
\label{fig:locus3}
\end{figure}

\begin{proof}
  Let $\Psi$ be a spherical cap in $\S(a) \cap \S(a')$ and let $p$ denote its center.
  We distinguish between two cases.

  \emph{Case 1.} $1 \leq \dist(a,a') \leq \sqrt{r^2 + h^2}$.
  In an extremal position, $a'=p$ and $a$ is on the base of the cap.
  Refer to Fig.~\ref{fig:locus4}\,(left). The locus of directions $\vec{v}\in \mathbb{S}^2$ corresponding
  to spherical caps in $\S(a) \cap \S(a')$ is a spherical ring of polar angle $\theta$, where
  $\theta/2= \arcsin \frac{h}{\dist(a,a')}$.

\begin{figure}[htbp]
\centering
\includegraphics[width=0.7\textwidth]{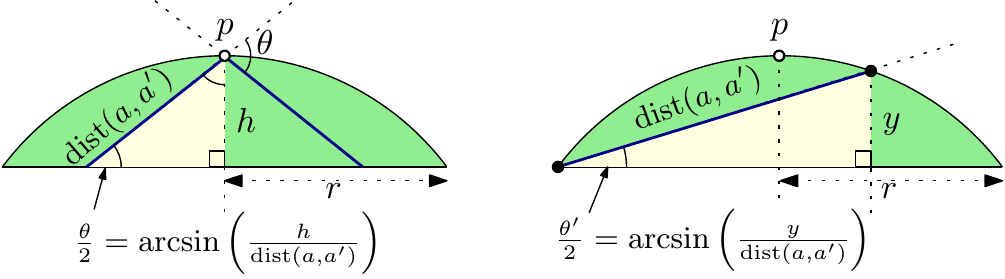}
\caption{Extremal positions of a segment that fits into the cap in Case~1 (left) and Case~2 (right).}
\label{fig:locus4}
\end{figure}

\emph{Case 2.} $\sqrt{r^2 + h^2}<\dist(a,a') \leq 2r$.
In an extremal position, $a$ is on the circle at the base of the cap, $a'$ is on the surface of the cap, and
  the great circle incident to $a$ and $a'$ passes through $p$; see Fig.~\ref{fig:locus4}\,(right).
  The locus of directions $\vec{v}\in \mathbb{S}^2$ corresponding to
  spherical caps in $\S(a) \cap \S(a')$ is a spherical ring of polar angle $\theta'$, where
  $\theta'/2=  \arcsin \frac{y}{\dist(a,a')} \leq \arcsin \frac{h}{\dist(a,a')}$,
  and $y$ is the distance between $a'$ and the base of $\Psi$.
  Consequently, this locus is contained in a spherical ring of polar angle $\theta$,
  where $\theta/2= \arcsin \frac{h}{\dist(a,a')}$.
  In particular, observe that when $\dist(a,a') =2r$, then $y=\theta'=0$.
 \end{proof}

\begin{lemma} \label{lem:mu}
  There exist positive constants $c_2$, $c_3$, and $c_4$ such that:
\begin{enumerate} [{\rm (i)}]
\item $\forall a \in A \colon c_2 \leq \mu(\S(a)) \leq c_3$, and
\item $\forall a \in A \colon \sum_{a' \in A} \mu(\S(a) \cap \S(a')) \leq c_4$.
\end{enumerate}
 \end{lemma}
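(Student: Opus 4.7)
My plan is to handle part (i) by direct integration, and part (ii) by combining Lemma~\ref{lem:locus} with the shell bound from Lemma~\ref{lem:shell}, then checking that the exponents in $R=\alpha n^{1/3}$ and $h=\alpha n^{-1/6}$ cancel exactly.

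\smallskip\noindent\textbf{Part (i).} For fixed $\vec{v}\in\mathbb{S}^2$, the condition $a\in\text{cap}(\vec{v},q)$ is equivalent to $q\in a-K_{\vec{v}}$, where
\[ K_{\vec{v}}=\{u\in\RR^3:\|u\|\le R,\ \langle u,\vec{v}\rangle\ge R-h\} \]
is the cap shell in direction $\vec{v}$. Since $\|a\|\le R$ and $\|u\|\le R$ for $u\in K_{\vec{v}}$, the triangle inequality gives $a-K_{\vec{v}}\subseteq B'$, so the outer constraint $q\in B'$ is redundant and does not reduce the integral. I would then invoke the standard cap-volume formula $\vol(K_{\vec{v}})=\pi h^2(3R-h)/3$, which is independent of $\vec{v}$, and integrate over $\mathbb{S}^2$ (area $4\pi$) to obtain
\[ \mu(\S(a))=\tfrac{4\pi^2 h^2(3R-h)}{3}=4\pi^2\alpha^3\bigl(1-O(n^{-1/2})\bigr), \]
after substituting the parameters. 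For $n$ large this lies between two positive constants $c_2,c_3$ depending only on $\alpha$.

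\smallskip\noindent\textbf{Part (ii).} First I would observe that every cap has diameter $2r$ (realized across the base), so $\S(a)\cap\S(a')=\emptyset$ whenever $d:=\dist(a,a')>2r$; hence only $a'$ with $d\le 2r$ contribute. For such a pair I would combine two estimates. By Lemma~\ref{lem:locus}, the admissible directions $\vec{v}$ lie in a spherical ring of polar angle $\theta\le 2\arcsin(h/d)$, with surface area $\le 4\pi\sin(\theta/2)\le 4\pi h/d$ (valid since $d\ge 1\ge h$ for $n$ large). For each such $\vec{v}$, the set of admissible $q$ is contained in $a-K_{\vec{v}}$, whose volume is $O(Rh^2)$ by the computation in part (i). Multiplying gives $\mu(\S(a)\cap\S(a'))=O(Rh^3/d)$. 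Summing over $a'\neq a$ with $d\le 2r$ via Lemma~\ref{lem:shell},
\[ \sum_{a'\neq a,\ d\le 2r}\frac{1}{\dist(a,a')}\le\sum_{j=1}^{\lceil 2r\rceil}\frac{c_1 j^2}{j}=O(r^2), \]
so $\sum_{a'\neq a}\mu(\S(a)\cap\S(a'))=O(Rh^3\cdot r^2)=O(R^2h^4)$, using $r^2=2Rh-h^2=O(Rh)$. Substituting the parameters makes the exponents cancel: $R^2h^4=\alpha^6$. Adding the diagonal term $\mu(\S(a)\cap\S(a))=\mu(\S(a))=O(1)$ from part (i) bounds the full sum by a constant $c_4=c_4(\alpha)$, as required.

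\smallskip\noindent\textbf{Anticipated difficulty.} The bound in part (ii) is tight: the identity $R^2h^4=\Theta(1)$ under $R=\alpha n^{1/3}$, $h=\alpha n^{-1/6}$ is precisely what drives this choice of parameters. All three inputs—the $1/d$ discount from Lemma~\ref{lem:locus}, the cap-volume estimate, and the quadratic shell-count from Lemma~\ref{lem:shell}—must each be used at full strength; replacing the $O(Rh^3/d)$ bound by the cruder $O(Rh^2)$ (ignoring the angular restriction) would leave a residual factor $\Omega(n^{1/12})$ in the sum and destroy the argument. The main bookkeeping step is therefore just verifying that these three estimates assemble without loss; no further geometric insight is needed beyond what Lemmas~\ref{lem:shell} and~\ref{lem:locus} already provide.
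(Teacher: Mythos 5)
Your proof is correct and follows essentially the same route as the paper: part (i) is obtained there too as $\mu(\S(a)) = v_0\cdot\area(\mathbb{S}^2)$ with $v_0=\tfrac{\pi h^2}{3}(3R-h)$, and part (ii) likewise combines the angular restriction of Lemma~\ref{lem:locus} with the shell bound of Lemma~\ref{lem:shell}, the only difference being that you carry $R,h$ explicitly (ending with $R^2h^4=\alpha^6$) while the paper absorbs the per-direction cap volume $O(Rh^2)=O(\alpha^3)$ into its constant $c_5$ before summing. Your explicit check that $a-K_{\vec{v}}\subseteq B'$ and the exact zone-area bound $4\pi\sin(\theta/2)=4\pi h/d$ are minor refinements of steps the paper treats implicitly.
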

\begin{proof}
  (i) The centers of spherical caps $S \in \S(a)$ with a fixed direction $\vec{v} \in \mathbb{S}^2$
  form a congruent spherical cap of the same direction $\vec{v}$.
  Hence $\mu(\S(a)) = v_0 \cdot \area(\mathbb{S}^2)$, where
  $v_0= \frac{\pi h^2}{3} (3R-h) = \pi \alpha^3 (1 - n^{-1/2}/3)$
  is the volume of a spherical cap. Since $1 - n^{-1/2}/3 \geq 5/6$ for $n \geq 4$,
and $\area(\mathbb{S}^2)=4 \pi^2$,
we deduce that $c_2 \leq \mu(\S(a)) \leq c_3$, for constants
$ c_2 = \pi \alpha^3 \cdot \frac56 \cdot 4 \pi^2$ and $ c_3 = \pi \alpha^3 \cdot 4 \pi^2$.

  (ii) For $a,a' \in A$, let $W_{a,a'}$ be the locus of
  directions $\vec{v}\in \mathbb{S}^2$ corresponding to spherical caps in $\S(a) \cap \S(a')$.
  Equivalently, for a fixed spherical cap $S\in \S$ with direction vector
  $\overrightarrow{aa'}/\dist(a,a')$, the set $W_{a,a'}$ is the locus of
  directions $\vec{v}\in \mathbb{S}^2$ corresponding to the line segments
  of length $\dist(a,a')$ contained in $S$; see Fig.~\ref{fig:locus3}.
Note that $\dist(a,a')  \geq 1$, since the pairwise  minimum distance is equal to $1$;
  and if $\dist(a,a')>2r$, then $W_{a,a'}=\emptyset$, of measure 0.
We therefore have $1 \leq \dist(a,a') \leq 2r$. By Lemma~\ref{lem:locus},
$W_{a,a'}$ is contained in a spherical ring of polar angle $\theta$, where
$\theta/2= \arcsin \frac{h}{\dist(a,a')}$, and so its measure in $\mathbb{S}^2$
is $\O(\arcsin(h/\dist(a,a'))=\O(h/\dist(a,a'))$.
 Since the spherical caps from $\S(a) \cap \S(a')$ are in $\S(a)$, we have
 \begin{equation}\label{eq:c5}
 \mu(\S(a) \cap \S(a')) \leq c_5 \, \frac{h}{\dist(a,a')},
 \end{equation}
 for some constant $c_5>0$.
 Recall that $A(j)=\{a'\in A: j \leq \dist(a,a') \leq j+1\}$. Hence
  \begin{align*}
 \sum_{a' \in A} \mu(\S(a) \cap \S(a')) &= \mu(\S(a)) + \sum_{j=1}^{\lfloor 2r\rfloor} \sum_{a' \in A(j)} \mu(\S(a) \cap \S(a')) \\
 &\leq c_3 + \sum_{j=1}^{\lfloor 2r\rfloor} \sum_{a' \in A(j)} c_5 h j^{-1}
  \leq c_3 + \sum_{j=1}^{\lfloor 2r\rfloor} c_1 j^2 c_5 h j^{-1} \\
 &\leq c_3 + c_1 c_5 h \sum_{j=1}^{\lfloor 2r\rfloor} j
  \leq c_3 + 3 c_1 c_5 r^2 h
  \leq c_3 +  6 c_1 c_5 \alpha^3
  = c_4,
  \end{align*}
for some  constant $c_4>0$. Here we used part~(i), Inequality~\eqref{eq:c5}, Lemma~\ref{lem:shell},
and $r^2 h < 2 \alpha^3$.
\end{proof}

\smallskip\noindent\textbf{Proof of Lemma~\ref{lem:key}.}
We wish to bound $\sum_{i=1}^n m_i$ from below.
Consider the sums $\sum_{i=1}^n m_i$ and $\sum_{i=1}^n i^2 m_i$
for which we apply the Cauchy-Schwarz inequality in the following form:
\[ \left(\sum_{i=1}^n m_i\right) \left(\sum_{i=1}^n i^2 m_i\right) \geq \left( \sum_{i=1}^n i m_i \right)^2. \]
We can bound $\sum_{i=1}^n i m_i$ from below and $\sum_{i=1}^n i^2 m_i$ from above
as follows:
\begin{align*}
  \sum_{i=1}^n i m_i &= \sum_{a \in A} \mu(\S(a)) \geq  \sum_{a \in A} c_2 = c_2 \, n,
& \text{[by Lemma~\ref{lem:mu}(i)] } \\
 \sum_{i=1}^n i^2 m_i &= \sum_{a \in A} \sum_{a' \in A} \mu(\S(a) \cap \S(a')) \leq
 \sum_{a \in A} c_4 =  c_4 \, n.  & \text{[by Lemma~\ref{lem:mu}(ii)] }
\end{align*}

Applying these estimates yields the desired lower bound
\[  \sum_{i=1}^n m_i  \geq \frac{\left(\sum_{i=1}^n i m_i \right)^2}{\sum_{i=1}^n i^2 m_i}
\geq \frac{(c_2 \, n)^2}{c_4 \, n} = \frac{c_2^2}{c_4} \, n. \]
Consequently,
\[ \frac{\mu(\{S \in \S \colon S \cap A \neq \emptyset\})}{\mu(\S)}=
\frac{\sum_{i=1}^n m_i}{\mu(\S)} \geq
\frac{(c_2^2/c_4) n}{\frac{128}{3} \pi^3 \alpha^3 n} = c>0, \]
for some positive constant $c$.
\qed

\smallskip\noindent\textbf{Proof of Theorem~\ref{thm:lower}.}
We randomly place a ball $C$ congruent to $B$ inside $B'$.
Specifically, recall that $B$, $B'$, and $B''$ are concentric balls of radii $R$, $2R$, and $3R$,
respectively, where $A \subset B \subset B' \subset B''$.
We construct a random congruence as follows:
Let $\varrho$ be rotation in $\mathrm{SO}(3)$, the group of rotation in $\RR^3$, chosen uniformly at random;
and let $\tau$ be a translation that maps $O$ to point in $B'$ chosen uniformly at random.
Put $\Phi:=\tau\circ \varrho$, and $C:=\Phi(B)$.
Since the center of $C$ is in $B'$, then $C\subset B''$.
For any spherical cap $S_0\in \P$ of direction $\vec{v}\in \mathbb{S}^2$, the
direction $\varrho(\vec{v})$ is distributed uniformly on $\mathbb{S}^2$~\cite{Mil65}.
Consequently, the probability distribution of the indicator variable $I[\Phi(S_0)=S]$
over $S\in \S$ is a scalar multiple of $\mu$.
By Lemma~\ref{lem:mu}, $\Phi(S)$ contains some point in $A$ with a probability at least
\[ \frac{\mu(\{S \in \S \colon S \cap A \neq \emptyset\})}{\mu(\S)} \geq c, \]
for some constant $c>0$. By linearity of expectation and~\eqref{eq:P},
the expected number of nonempty spherical caps in
$\Phi(\P)$ is at least $\frac12\, c n^{1/2}$.
Setting $\beta=\frac{c}{2}$ completes the proof of Theorem~\ref{thm:lower}.
\qed

\smallskip\noindent\textbf{Proof of Theorem~\ref{thm:lower-general} (sketch).}
Set $R=\alpha n^\tau$, $h=\alpha n^{-\tau/2}$, $r=(R^2-(R-h)^2)^{1/2}=\Theta(\alpha n^{\tau/4})$,
and proceed as in the proof of Theorem~\ref{thm:lower}. With this setting we have $r^2 h =\Theta(\alpha^3)$.
The resulting lower bound is of the form
\[ \Omega \left( n^{1-3\tau} \, \frac{R^2}{r^2} \right) =  \Omega \left( n^{1-3\tau/2} \right), \]
as required.
\qed

\section{Upper bound: Proof of Theorem~\ref{thm:upper}} \label{sec:upper}

In this section, we describe and analyze a $3$-dimensional construction, similar to the Valtr and the Conlon-Lim grids.
It suffices to prove Theorem~\ref{thm:upper} for every $n$ of the form $n=2^{3k}$, where $k \in \NN$.
Our point set is a suitable perturbation of the $3$-dimensional Cartesian grid $G=\{0,1,\ldots,2^k-1\}^3$,
where each point lies within a ball of radius $\O(\eps)>0$ centered at an integer point in $G$.
Here $\eps< 10^{-2} 2^{-k}< 10^{-2}$ is a sufficiently small positive real that depends on $n$.
Recall that for a nonnegative integer $N=\sum_{i \geq 0} N_i 2^i$, where $N_i \in \{0,1\}$, and for $\eps\in (0,\frac12)$,
we have defined $(N)_\eps = \sum_{i \geq 0} N_i \eps^{i+1}$. Note that $0< (N)_\eps < 2\eps$.

We give an explicit formula for the perturbation $\Phi:G\rightarrow \RR^3$
in terms of $\eps>0$. Let $\mathbf{e}_1$, $\mathbf{e}_2$, and $\mathbf{e}_3$
denote the three standard basis vectors in $\RR^3$. For a grid point $p\in G$, let $p=(p_1,p_2,p_3)$
denote the three coordinates of $p$. For all $i,j\in \{1,2,3\}$, let
$\varphi_{i,j}:G\rightarrow \RR^3$, $\varphi_{i,j}(p)=(p_i)_\eps\cdot \mathbf{e}_j$.
 For every $p\in \ZZ^3$, let $\mathbf{u}(p)\in \RR^3$ be a random unit vector.
We can now define the perturbation $\Phi:G\rightarrow \RR^3$ as
\begin{align}
\hspace{-8mm}
\Phi(p)
  &=p+\left(\sum_{i\neq j} \eps^{(3i+j-5)k}\cdot \varphi_{i,j}(p)\right)+ \eps^{7k}\cdot \mathbf{u}(p)\label{eq:Phi}\\
  &= \left(p_1+\eps^{2k}(p_2)_\eps + \eps^{5k}(p_3)_\eps,
             p_2+(p_1)_\eps+\eps^{6k} (p_3)_\eps,
             p_3+\eps^{k}(p_1)_\eps+ \eps^{4k}(p_3)_\eps\right) +\eps^{7k} \mathbf{u}(p), \nonumber
\end{align}
and let $A=\Phi(G)$. The last term, $\eps^{7k}\cdot \mathbf{u}(p)$, ensures that $A$ is in strongly general position.
It is convenient to think of $\Phi$ as a concatenation of seven successive perturbations, corresponding to the terms
in~\eqref{eq:Phi}. Terms with different powers of $\eps$ ensure that each successive perturbation is negligible
with respect to previous perturbations if $\eps>0$ is sufficiently small. We introduce notation for the result of
the first perturbation: let $\Phi_1:G\rightarrow \RR^3$, $\Phi_1(p)=p+\varphi_{1,2}(p) = \left(p_1,p_2+(p_1)_\eps,p_3\right)$.

We next analyze this construction and show that it contains no large convex subsets as quantified in
Theorem~\ref{thm:upper}. Let $B\subset A$ be a set in convex position,
and let $C=\Phi^{-1}(B)$, \ie, $C\subset G$ with $B=\Phi(C)$.
If $\eps>0$ is sufficiently small, then for every vertex $p$ of $\conv(C)$,
the point $\Phi(p)$ is a vertex of $\conv(B)$.
However, for every vertex $q$ of $\conv(B)$, the point $\Phi^{-1}(q)$ is either a vertex of $\conv(C)$
or lies in the interior of an edge or a face of $\conv(C)$. We have
\[ \conv(C)\subset \conv(G), \text{ and } \vol(\conv(C))\leq \vol(\conv(G))=n. \]
By Theorem~\ref{thm:Andrews}, $\conv(C)$ has $\O(n^{(3-1)/(3+1)})=\O(n^{1/2})$ vertices.
By Euler's polyhedral formula, $\conv(C)$ has $\O(n^{1/2})$ edges and facets.
We will show that if an edge of $\conv(C)$ contains $m$ lattice points in $G$, then
only $\O(\log^2 m)$ of these points are in $C$ (Lemma~\ref{lem:edge});
and if a face $F$ of $\conv(C)$ contains $m$ lattice points in $G$,
then $\O(m^{1/3})$ of these points are in $C$ (Lemma~\ref{lem:face}).

We use three lemmas (Lemmas~\ref{lem:sub-add}, \ref{lem:edge}, and~\ref{lem:face}).
Lemma~\ref{lem:sub-add} describes the subadditivity of the function $g(\cdot)$
(defined at the end of Section~\ref{sec:intro}). Its easy proof is left to the reader.

\begin{lemma} \label{lem:sub-add}
Let $S= \bigcup_{i=1}^k S_i$ be an arbitrary partition of $S$. Then $g(S) \leq \sum_{i=1}^k g(S_i)$.
\end{lemma}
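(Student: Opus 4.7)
The plan is to exploit the fact that convex position is hereditary: any subset of a set in convex position is itself in convex position. With this observation, the lemma becomes essentially a counting argument on a partition.

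First I would take a witness convex subset $T \subseteq S$ realizing the maximum, so that $|T|=g(S)$ and $T$ is in convex position. Using the given partition $S=\bigcup_{i=1}^k S_i$, I would induce a partition of $T$ by setting $T_i := T\cap S_i$ for each $i\in\{1,\ldots,k\}$; since the $S_i$ partition $S$, the $T_i$ partition $T$, so in particular $|T|=\sum_{i=1}^k |T_i|$.

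Next I would observe that for every $i$, the set $T_i$ is a subset of $T$, and a subset of a set in convex position is in convex position (no point of $T_i$ can lie in the convex hull of the other points of $T_i$, for otherwise the same point would lie in the convex hull of a subset of $T\setminus\{\text{that point}\}$, contradicting convex position of $T$). Hence $T_i$ is a convex subset of $S_i$, which gives $|T_i|\leq g(S_i)$ by definition of $g$.

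Summing these inequalities over $i=1,\ldots,k$ yields
\[
g(S) = |T| = \sum_{i=1}^k |T_i| \leq \sum_{i=1}^k g(S_i),
\]
which is exactly the claim. There is no real obstacle here; the only subtlety worth mentioning explicitly is the heredity of convex position, which is immediate from the definition given at the start of Section~\ref{sec:intro}.
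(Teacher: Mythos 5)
Your argument is correct and is precisely the routine argument the paper has in mind (the authors explicitly leave this ``easy proof to the reader''): restrict a maximum convex subset $T$ of $S$ to each part $S_i$, use heredity of convex position to get $|T\cap S_i|\leq g(S_i)$, and sum. Nothing is missing; the heredity step you spell out is the only point of substance and you justify it correctly.
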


Lemma~\ref{lem:edge} below states that in a perturbation of a set of collinear points in the grid $G$,
there are only $\O(\log^2 n)$ points in convex position. A possible proof for Lemma~\ref{lem:edge} would establish
that the perturbation of collinear points in $G$ is a Horton set in $\RR^3$, and then apply the following result
due to K{\'a}rolyi and Valtr~\cite{KV03}.

\begin{lemma} \label{lem:horton} {\rm (K{\'a}rolyi and Valtr~\cite{KV03})}
  Let $S \in \RR^d$ be a Horton set of size $n$. Then $g_d(S) = \O(\log^{d-1} n)$, where the
  constant hidden in the $\O$ notation depends only on $d$.
\end{lemma}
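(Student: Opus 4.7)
My plan is to prove Lemma~\ref{lem:horton} by induction on the dimension $d$ and the size $n$, exploiting the recursive decomposition that defines Horton sets. A Horton set $S \subset \RR^d$ of size $n$ decomposes as $S = S_0 \cup S_1$, where each $S_i$ is a $d$-dimensional Horton set of size roughly $n/2$, with one lying deep below the other along a coordinate axis (say the $d$th). The nontrivial base case is $d=2$, where the target is $g_2(n) = \O(\log n)$.

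For $d=2$, assume $S_1$ lies deep above $S_0$, and let $C \subset S$ be in convex position with $C_i = C \cap S_i$. The deep-below relation forces every non-$x$-extremal point of $C_0$ to lie on the lower convex chain of $\conv(C)$, and every non-$x$-extremal point of $C_1$ on the upper chain. Hence each $C_i$ is itself an $x$-monotone convex chain inside the smaller Horton set $S_i$. Introducing an auxiliary quantity $h_2(n)$, defined as the maximum size of an $x$-monotone convex chain inside a planar Horton set of size $n$, this yields $g_2(n) \leq 2\, h_2(n) + \O(1)$. It then remains to prove a recurrence of the form $h_2(n) \leq h_2(n/2) + \O(1)$, which telescopes to $h_2(n) = \O(\log n)$ and gives $g_2(n) = \O(\log n)$.

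For $d \geq 3$, I would induct on $d$. A convex set $C$ in a $d$-dimensional Horton set $S = S_0 \cup S_1$ has $C \cap S_0$ on the downward-facing portion of $\partial\conv(C)$ and $C \cap S_1$ on the upward-facing portion. Projecting such a cap orthogonally onto $\RR^{d-1}$ (forgetting the $d$th coordinate) yields a convex-position set sitting inside a $(d-1)$-dimensional Horton set (as implicit in the recursive construction), whose size is bounded by $\O(\log^{d-2} n)$ by the inductive hypothesis. An auxiliary ``cap'' quantity analogous to $h_2$, combined with a telescoping argument across the $\O(\log n)$ levels of the Horton recursion, contributes the extra logarithmic factor and yields $g_d(n) = \O(\log^{d-1} n)$.

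The main obstacle is proving the refined recursion $h_2(n) \leq h_2(n/2) + \O(1)$ in the planar case: the naive decomposition gives only $h_2(n) \leq 2\, h_2(n/2)$, which is useless. The fix is to exploit the deep-below property to argue that the slopes of the edges of a convex chain are tightly constrained whenever the chain crosses between $S_0$ and $S_1$, so that the number of such crossings is $\O(1)$ and all but $\O(1)$ points of the chain must lie in a single half, enabling the improved recursion. Transferring this slope-constraint argument to higher dimensions---where one must track facet-normal directions rather than edge slopes of a chain---is the principal technical difficulty.
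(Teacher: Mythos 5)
You should know at the outset that the paper contains no proof of this statement: it is quoted from K\'arolyi and Valtr~\cite{KV03} and mentioned only as an alternative route to Lemma~\ref{lem:edge}, which the authors then prove directly instead. So your attempt can only be judged against the argument in~\cite{KV03}, which relies on the concrete recursive structure of their higher-dimensional Horton-type sets. Your planar case is essentially the standard argument and is sound in outline: splitting a convex-position set into its lower and upper chains, and showing that a cup (resp.\ cap) that meets the lower half $S_0$ (resp.\ upper half $S_1$) can contain at most two points of the other half, one at each end in the $x$-order. The crux you correctly flag --- ruling out patterns such as two upper-half points followed by a lower-half point --- does follow from the deep-below relation via an elementary slope comparison for mixed triples, and it yields $h_2(n)\leq h_2(\lceil n/2\rceil)+\O(1)$ and hence $g_2=\O(\log n)$.

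The genuine gap is the induction on dimension for $d\geq 3$. Orthogonal projection does not preserve convex position: lifting an arbitrary planar $n$-point set to the paraboloid $z=x^2+y^2$ produces a set in convex position, all of it on the upward-facing part of its hull, whose $xy$-projection is the original arbitrary set. So the projected ``cap'' need not be in convex position at all, and the inductive hypothesis cannot be applied to it; the claimed bound $\O(\log^{d-2}n)$ per level has no basis. Moreover, under the definition of a $d$-dimensional Horton set used in this paper (recursive halving by the first coordinate, with one half deep below the other in the last coordinate), there is no ``$(d-1)$-dimensional Horton set implicit in the recursive construction'': the projection of $S$, $S_0$, or $S_1$ onto $\RR^{d-1}$ carries no guaranteed Horton structure, and distinct points may project arbitrarily close together. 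What survives of your plan is weaker: the points of a convex-position set lying on the ``wrong'' side of the deep-below split are confined (up to the higher-dimensional analogue of the mixed-triple lemma, which you would also need to prove) to the silhouette of the hull, and in $\RR^3$ the silhouette can contain many points, not $\O(1)$ as in the plane; handling it requires structural control of the projected configuration that the abstract definition does not provide. This is precisely where~\cite{KV03} invokes the specific construction of their sets rather than a bare deep-below recursion, so as written your $d\geq 3$ step is a plan rather than a proof.
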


Instead, we give a direct proof for Lemma~\ref{lem:edge} and then generalize it to handle a perturbation
of a set of coplanar points in Lemma~\ref{lem:face} below.

\begin{lemma}\label{lem:edge}
Let $S \subset G$ be a set of collinear points. Then $\conv(\Phi(S))$ has $\O(\log^2 m)$ vertices,
where $m= |G \cap \conv(S)|$.
\end{lemma}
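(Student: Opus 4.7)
The plan is to show that $\Phi(S)$ is combinatorially equivalent, via an invertible affine map, to a Horton set in $\RR^3$ of size at most $m$, and then apply Lemma~\ref{lem:horton} with $d=3$. The paper explicitly mentions this reduction as an alternative route; I would pursue it as the cleanest approach, even though the authors prefer a direct proof (presumably because it generalizes more readily to the coplanar setting of Lemma~\ref{lem:face}).

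First I would parameterize. Write $S = \{p_j = q + j\vec v : j \in J\}$ with $\vec v \in \ZZ^3$ the primitive direction vector of the line containing $S$, and $|J|\leq m$. Reading~\eqref{eq:Phi},
\[
\Phi(p_j) = p_j + f_j\,\vec a_1 + g_j\,\vec a_2 + h_j\,\vec a_3 + \eps^{7k}\,\mathbf u(p_j),
\]
where $f_j = (q_1+jv_1)_\eps$, $g_j = (q_2+jv_2)_\eps$, $h_j = (q_3+jv_3)_\eps$, and the columns $\vec a_1, \vec a_2, \vec a_3 \in \RR^3$ of the perturbation matrix from~\eqref{eq:Phi} are linearly independent (a direct cofactor expansion gives a determinant which is a nonzero polynomial in $\eps$). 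Subtracting the affine trend $q+j\vec v$ and applying the invertible linear map $[\vec a_1 \mid \vec a_2 \mid \vec a_3]^{-1}$ both preserve convex position, so it suffices to bound the vertex count of $\conv(T)$, where $T = \{j\,\vec v' + (f_j, g_j, h_j) : j \in J\} \subset \RR^3$ for an explicit $\vec v' \in \RR^3$.

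The core step is an induction on $m$ showing that $T$ is a Horton set in $\RR^3$. Because $\vec v$ is primitive, some coordinate $v_{i^*}$ is odd; split $J = J_0 \cup J_1$ by the parity of $q_{i^*} + jv_{i^*}$, which alternates with $j$. Then $(q_{i^*}+jv_{i^*})_\eps$ lies in $[0,\eps)$ on $J_0$ and in $[\eps,2\eps)$ on $J_1$, producing a gap of size $\approx \eps$ in the $i^*$th ``Horton coordinate'' of $T$. Because $\eps < 10^{-2}\,2^{-k}$ and $m \leq 2^k$, this gap dominates every later-order perturbation in~\eqref{eq:Phi} (all carrying factors $\leq \eps^k$) as well as the extrapolation of any supporting plane through three points of $T \cap J_1$ over a $j$-range of length at most $m$, thereby verifying the deep-below property. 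Each half is again an AP of collinear lattice points in $G$ with step $2\vec v$, to which the induction applies. Lemma~\ref{lem:horton} with $d=3$ then gives the required $\O(\log^2 m)$ bound on $|\textrm{vertices}(\conv(T))| = |\textrm{vertices}(\conv(\Phi(S)))|$.

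The main obstacle will be the careful verification of the deep-below property at each recursion step, particularly for line directions for which two of $f_j, g_j, h_j$ happen to be linearly related (for example $\vec v = (1,1,1)$, in which case $f_j = g_j = h_j$). In such degenerate cases $T$ lies in a $2$-dimensional affine subspace of $\RR^3$, the relevant 3D determinants collapse, and the random perturbation $\eps^{7k}\mathbf u(\cdot)$ must be invoked to restore strong general position. The geometric hierarchy of exponents $1 \gg \eps^k \gg \eps^{2k} \gg \cdots \gg \eps^{7k}$ in~\eqref{eq:Phi} is precisely what keeps every relevant scale separated from every other, so that the parity gap produced at each recursion step is never overwhelmed by the accumulated tilt or noise.
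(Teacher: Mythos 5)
Your reduction is exactly the alternative route the paper flags and then declines (``Instead, we give a direct proof''), and as sketched it has genuine gaps. The most serious one: you never state, let alone verify, what a Horton set in $\RR^3$ is in the sense of K\'arolyi--Valtr. The bound $g_d(S)=\O(\log^{d-1} n)$ of Lemma~\ref{lem:horton} does not follow from ``split into two halves, each half Horton, one deep below the other'' applied recursively with a single separating direction; the extra logarithmic factor in dimension $3$ comes from a recursive structure that also controls the transversal (lower-dimensional/projected) behaviour inside each half, and this is exactly the part your one-split-per-level scheme never touches. Verifying a parity split together with a gap in one coordinate direction is therefore not sufficient to invoke Lemma~\ref{lem:horton}; you would have to reproduce the full K\'arolyi--Valtr structure for these extremely anisotropic, nearly collinear sets, which is essentially the work the paper's direct proof does by hand (Claim~\ref{cl:2} gives an $\O(\log m)$ bound in the $xy$-projection for each nested set $S_a$, and Claim~\ref{cl:3} turns this into an $\O(\log m)\times\O(\log m)$ count in $3$-space).

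Second, the deep-below verification you defer is the entire difficulty, and your quantitative claims are off. It is not true that all later perturbations ``carry factors $\leq \eps^k$'': the family $\varphi_{1,2}$ has coefficient $\eps^{0}=1$. If you choose $i^*$ by oddness of $v_{i^*}$ (say $\vec v=(2,1,0)$, forcing $i^*=2$), the gap you create has size $\Theta(\eps^{2k+1})$ in the $\mathbf{e}_1$-direction, while the points still spread by $\Theta(\eps^{2})$ in the $\mathbf{e}_2$-direction through the still-active $(p_1)_\eps$ terms, on top of the huge linear trend $j\vec v$; planes through three such nearly collinear points of one parity class are badly conditioned, and whether a point of the other class stays below all of them over a range of length $m\leq 2^k$ is precisely the margin computation \eqref{eq:z1}--\eqref{eq:height} of the paper, which needs $\eps<\tfrac{1}{2n^{1/3}}$ and which uses the \emph{lowest-indexed} coordinate axis not orthogonal to $L$ (i.e., the dominant perturbation family), not an arbitrary odd coordinate. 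Finally, in the coplanar degenerate case, invoking $\eps^{7k}\mathbf{u}(p)$ to ``restore strong general position'' does not help: general position is not the obstruction, and a tiny transversal perturbation of a coplanar set can in principle create many new hull vertices. What is needed is that the Horton-type margins dominate the $\eps^{7k}$ noise, a quantitative robustness statement you do not make -- and which, again, is exactly what the paper's explicit inequalities supply.
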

\begin{proof}
We may assume that $|S|\geq 2$. Let $L$ be the line spanned by $S$, and let $m=|G\cap \conv(S)|$.
Consider the first coordinate axis ($x$-, $y$-, or $z$-axis) that is not orthogonal to $L$.
We give a detailed proof for the case that the $x$-axis is not orthogonal to $L$.
The other two cases are analogous (and are omitted):
If $L$ is orthogonal to the $x$-axis, then the first two iterations of the perturbation
(which depend on the first coordinate) translate all points $S$ uniformly,
hence they have no impact on the convex hull of $\Phi(S)$. Similarly,
if $L$ is orthogonal to both $x$- and $y$-axes, then we can ignore the components of the perturbation
that depend on the first and second coordinates (and use the third coordinate instead).

Assume that $L$ is not orthogonal to the $x$-axis.
Label the points in $S$ as $S=\{s_1,\ldots , s_t\}$ sorted by increasing $x$-coordinates.
Let the binary representation of the $x$-coordinate of $s\in S$ be $x(s)=\sum_{j=0}^{k-1} x_j(s) 2^j$.
We recursively define the sets
\begin{equation}\label{eq:nest}
    S_1\supset S_2\supset \ldots \supset S_b,
\end{equation}
for some suitable $b\leq \O(\log m)$ as follows; see Fig.~\ref{fig:edge}.
Let $S_1=S$. Given a set $S_a\subset S$, for $a\geq 1$, with $|S_a|\geq 2$, we define $S_{a+1}$ as follows.
Let $j(a)\geq 0$ be the smallest integer such that $\{x_{j(a)}(s): s\in S_a\}=\{0,1\}$;
and let $S_{a+1}=\{s\in S_a: x_{j(a)}(s)=1\}$.
Importantly, this implies that the translation vector
$\varphi_{1,2}(s)$ has a term $\eps^{j(a)}\mathbf{e}_2$ for all $s\in S_{a+1}$;
but this term is missing for all $s\in S_a\setminus S_{a+1}$.
Note that for all $s\in S_a$, the last (\ie, the least significant)
$j(a)$ bits in the binary expansion of $x(s)$ are the same.
Consequently, $j(a+1)>j(a)$ for all $a\geq 1$; hence $a\leq \O(\log m)$,
and the recursion terminates with $b\leq \O(\log m)$, as claimed.
Note also that $|S_b|=1$ by definition.

\begin{figure}[htbp]
\centering
\includegraphics[width=\textwidth]{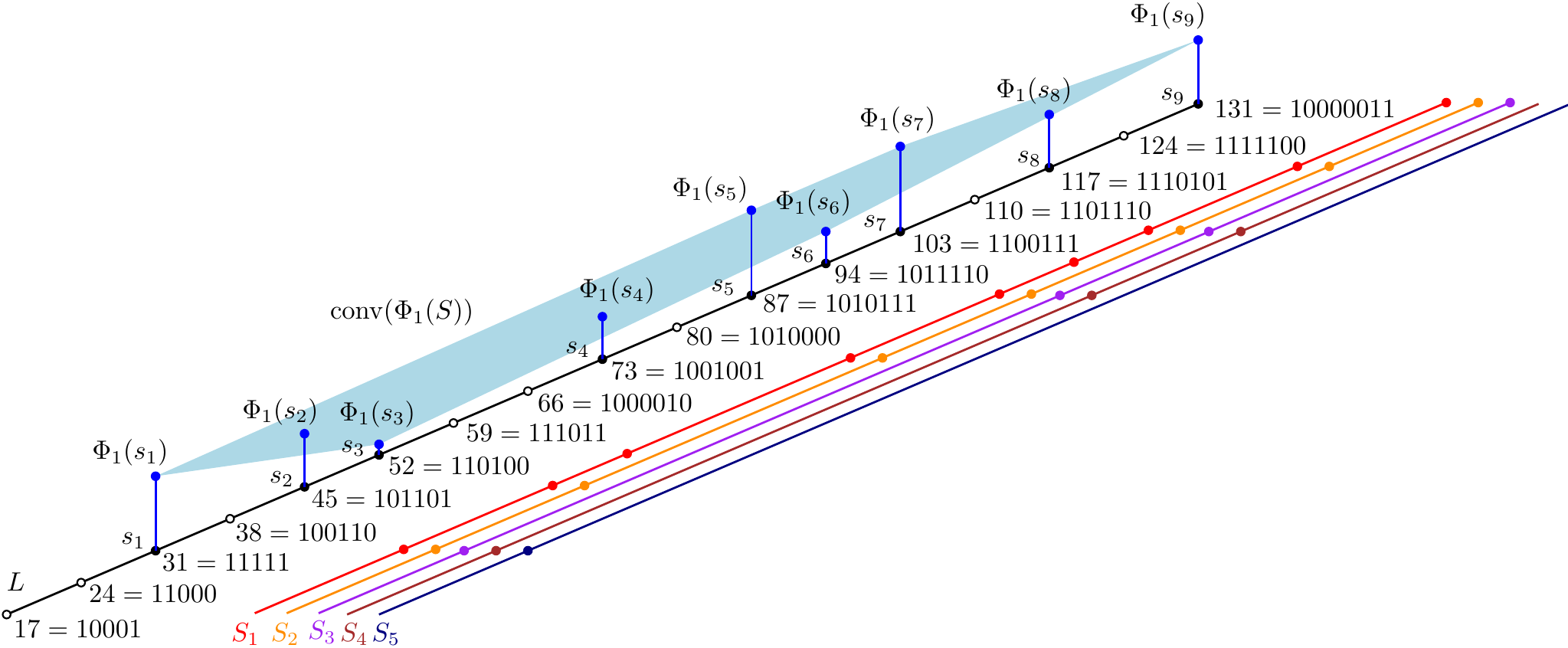}
\caption{Perturbation for a set of collinear grid points. Left: Solid dots indicate $S=\{s_1,\ldots , s_9\}$;
  all other grid points are marked by empty dots.
  The binary expansion of the $x$-coordinates of all grid points.
Right: The elements of $S_1,\ldots, S_5$, where  $j(1)=0$, $j(2)= 1$, $j(3)=2$, and $j(4)=3$.}
\label{fig:edge}
\end{figure}

We show that $\conv(\Phi(S))$ has $\O(\log^2 m)$ vertices in two steps:
First we consider the convex hull of a projection of $\Phi(S)$ to a plane,
and then extend the argument to 3-space.

\smallskip\noindent\textbf{Convex hull of the orthogonal projection to the $xy$-plane.}
We consider the impact of the first step of the perturbation, $\Phi_1(S)$.
Let $\Phi^{xy}(S)$ and $\Phi_1^{xy}(S)$, resp., be the orthogonal projection of $\Phi(S)$ and
$\Phi_1(S)$ to the $xy$-plane.

\begin{claim}\label{cl:1}
The convex polygon $\conv(\Phi^{xy}(S))$ has $\O(\log m)$ vertices.
\end{claim}
If a point $\Phi^{xy}(p)$ is a vertex of $\conv(\Phi^{xy}(S))$, then
$\Phi_1^{xy}(p)$ is on the boundary of $\conv(\Phi_1^{xy}(S))$,
since subsequent perturbations are negligible.
Note that both $\Phi_1^{xy}(s_1)$ and $\Phi_1^{xy}(s_t)$ are vertices of $\conv(\Phi_1^{xy}(S))$.
These two points decompose the boundary of $\conv(\Phi_1^{xy}(S))$ into two Jordan arcs:
an \emph{upper arc} and a \emph{lower arc} in the $xy$-plane.
It suffices to show that each arc contains $\O(\log n)$ points of $\Phi_1^{xy}(S)$.
Without loss of generality, consider the upper arc.

\begin{claim}\label{cl:2}
Let $s_\ell \in S$.
If $\Phi_1^{xy}(s_\ell)$ lies in the upper arc of $\conv(\Phi_1^{xy}(S))$,
then $s_\ell$ is the first or last point in $S_a$ for some $a\in \{1,\ldots , b\}$.
\end{claim}
We prove the contrapositive of Claim~\ref{cl:2}.
Suppose that $s_\ell$ is neither the first nor the last point in
$S_a$ for any $a\in \{1,\ldots , b\}$.
Due to \eqref{eq:nest} and $|S_b|=1$, there exists an $a\in \{1,\ldots , b-1\}$
such that $s_\ell\in S_a$ but $s_\ell\notin S_{a+1}$.
Denote the first and last points in $S_a$ and $S_{a+1}$, resp.,
by $s_{\sigma(1)},s_{\sigma(2)}$ and $s_{\tau(1)},s_{\tau(2)}$.
Then $\conv(S_a)$ and $\conv(S_{a+1})$, resp., are the line segments
$s_{\sigma(1)}s_{\sigma(2)}$ and $s_{\tau(1)}s_{\tau(2)}$; see Fig.~\ref{fig:heights}.
Note that $\sigma(1) \leq \tau(1) \leq \tau(2) \leq \sigma(2)$ and $\sigma(1) < \sigma(2)$.
Then we have $\sigma(1)<\ell<\tau(2)$ or $\tau(1)<\ell<\sigma(2)$.
Assume w.l.o.g.\ that $\sigma(1)<\ell<\tau(2)$.

We show that the point $\Phi_1^{xy}(s_\ell)$ lies below the line segment
$\Phi_1^{xy}(s_{\sigma(1)}) \Phi_1^{xy}(s_{\tau(2)})$, and so it cannot be on the upper arc of $\conv(\Phi_1^{xy}(S))$.
Since the points $s_{\sigma(1)}, s_\ell,s_{\tau(2)}\in L$ are collinear,
and $\Phi_1(p)=p+\varphi_{1,2}(p)$,
it is enough to compare the perturbations incurred by $\varphi_{1,2}$, which depend only on the $x$-coordinates.
By the choice of $a$, we have $s_{\sigma(1)}, s_\ell,s_{\tau(2)}\in S_a$.
This means that in the binary expansion of their $x$-coordinates,
the last $j(a)$ bits are the same. Regarding the bit $x_{j(a)}(.)$,
we know that $x_{j(a)}(s_{\ell})=0$ and $x_{j(a)}(s_{\tau(2)})=1$.
However, $s_{\sigma(1)}$ may or may not be in $S_{a+1}$, and so we do not know $x_{j(a)}(s_{\sigma(1)})$.
Consequently, by using the inequality $(N)_\eps < 2\eps$ in~\eqref{eq:z2}, we have
 \begin{align}
 \|\varphi_{1,2}(s_{\sigma(1)})\| &
    \geq \left(\sum_{j=0}^{j(a)-1} x_j(s_{\ell})\cdot \eps^{j+1}\right) =:X \label{eq:z1}\\
 \|\varphi_{1,2}(s_{\ell})\| &
    < \left(\sum_{j=0}^{j(a)-1} x_j(s_{\ell})\cdot \eps^{j+1}\right) + 2\eps^{j(a)+2}
    = X+2\eps^{j(a)+2}\label{eq:z2}\\
 \|\varphi_{1,2}(s_{\tau(2)})\| &
    \geq \left(\sum_{j=0}^{j(a)-1} x_j(s_{\ell})\cdot \eps^{j+1}\right) + \eps^{j(a)+1}
    = X+\eps^{j(a)+1}. \label{eq:z3}
\end{align}
Since $s_{\sigma(1)}$, $s_\ell$, and $s_{\tau(2)}$
are collinear grid points, we can express $s_\ell$ as a convex combination:
\begin{align}\label{eq:cx}
    s_\ell &= \alpha\cdot s_{\sigma(1)} + (1-\alpha)\cdot s_{\tau(2)}
\end{align}
for some coefficient $\frac{1}{n^{1/3}}\leq \alpha \leq 1-\frac{1}{n^{1/3}}$.
Denote by $q$ the point in the line segment $\Phi_1^{xy}(s_{\sigma(1)})\Phi_1^{xy}(s_{\tau(2)})$
above $s_\ell$. Then, substituting \eqref{eq:z1}--\eqref{eq:z3} into \eqref{eq:cx}, we obtain
\begin{align}
    \|q-s_\ell\|
    & = \alpha\cdot \|\varphi_{1,2}(s_{\sigma(1)})\| + (1-\alpha)\cdot \|\varphi_{1,2}(s_{\tau(2)})\| \label{eq:height}\\
    &\geq \left(\sum_{j=0}^{j(a)-1} x_j(s_\ell)\cdot \eps^{j+1}\right) + (1-\alpha)\cdot \eps^{j(a)+1}\nonumber\\
    &\geq X +\frac{\eps^{j(a)+1}}{n^{1/3}} > X+2\eps^{j(a)+2}
     > \|\varphi_{1,2}(s_{\ell})\|, \nonumber
\end{align}
if $\eps< \frac{1}{2n^{1/3}}$.  This confirms that $\Phi_1^{xy}(s_\ell)$ lies below the line segment
$\Phi_1^{xy}(s_{\sigma(1)})\Phi_1^{xy}(s_{\tau(2)})$; and completes the proof of Claim~\ref{cl:2}.

Since $b=\O(\log m)$, there are at most $2b=\O(\log m)$ points that are first or last in $S_1,\ldots ,S_b$.
Consequently, Claim~\ref{cl:2} implies that the upper arc of $\conv(\Phi^{xy}(S))$ has $\O(\log m)$ vertices.
Similarly, one can show that the lower arc of $\conv(\Phi^{xy}(S))$ has $\O(\log m)$ vertices
(the key difference for handling the lower arc is that in the recursive definition of the
sets $S_1\supset S_2\supset \ldots \supset S_b$, we would put $S_{a+1}=\{s\in S_a: x_{j(a)}(s)=0\}$).
Overall, $\conv(\Phi^{xy}(S))$ has $\O(\log m)$ vertices; completing the proof of Claim~\ref{cl:1}.

\begin{figure}[htbp]
\centering
\includegraphics[width=.6\textwidth]{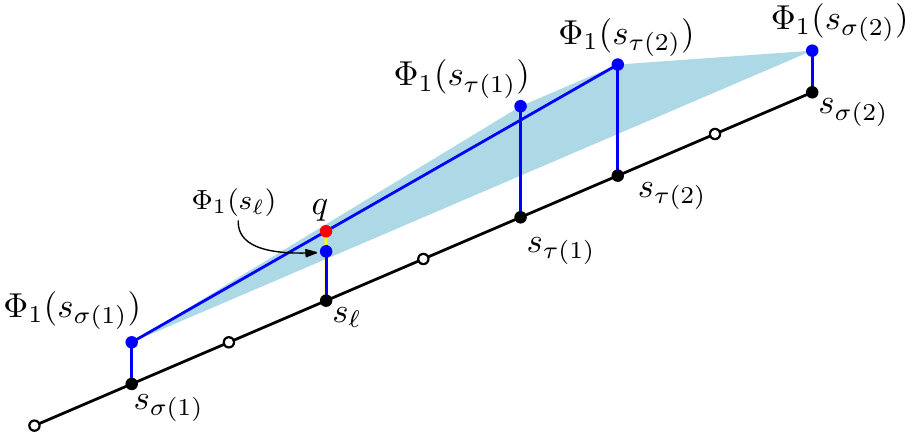}
\caption{The points $s_\ell$, $s_{\sigma(1)}$, $s_{\sigma(2)}$, $s_{\tau(2)}$, and $s_{\tau(2)}$; and their perturbations.}
\label{fig:heights}
\end{figure}
%

\smallskip\noindent\textbf{Convex hull in 3-space.}
Consider $\conv(\Phi(S))$. The orthogonal projection of $\conv(\Phi(S))$
to the $xy$-plane is $\conv(\Phi^{xy}(S))$.
The boundary of $\conv(\Phi^{xy}(S))$ is the projection of a closed
curve $\gamma$ on the boundary of $\conv(\Phi(S))$.
We have shown that the projection of $\gamma$,
hence $\gamma$ itself, has $\O(\log n)$ vertices.
The Jordan curve $\gamma$ partitions the boundary of $\conv(\Phi(S))$
into two components: An \emph{upper surface} and a \emph{lower surface}.
It suffices to show that each has $\O(\log^2 m)$ vertices.

\begin{claim}\label{cl:3}
Let $s_\ell\in S$. If $\Phi(s_\ell)$ is a vertex of the upper surface of $\conv(\Phi(S))$,
then $\Phi^{xy}(s_\ell)$ is a vertex of $\conv(\Phi^{xy}(S_a))$ for some $a\in \{1,2,\ldots , b\}$.
\end{claim}
We prove the contrapositive of Claim~\ref{cl:3}.
Suppose that $s_\ell\in S$ but $\Phi(s_\ell)$ is not a vertex of $\conv(\Phi(S_a))$
for any $a\in \{1,\ldots , b\}$. Note that \eqref{eq:nest} implies
\[
    \conv(\Phi^{xy}(S))
    =\conv(\Phi^{xy}(S_1))
    \supset \conv(\Phi^{xy}(S_2))
    \supset \ldots
    \supset \conv(\Phi^{xy}(S_b)).
\]
There exists some $a\in \{1,\ldots , b\}$ such that
$s_\ell\in S_a$ but $s_\ell\notin S_{a+1}$.
First triangulate the convex polygon $\conv(\Phi^{xy}(S_{a+1}))$; and then triangulate the nonconvex polygon
$\conv(\Phi^{xy}(S_a))\setminus \conv(\Phi^{xy}(S_{a+1}))$
such that each triangle is spanned by some vertices of $\conv(\Phi^{xy}(S_a))$ and
some vertices of $\conv(\Phi^{xy}(S_{a+1}))$.
The point $\Phi^{xy}(s_\ell)$ lies in some triangle $\Phi^{xy}(p)\Phi^{xy}(q)\Phi^{xy}(r)$
in which at least one corner is a vertex of $\conv(\Phi^{xy}(S_{a+1}))$.
Considering the perturbation $\Phi_1(s)=s+\varphi_{1,2}(s)$,
the triangle $\Phi(p)\Phi(q)\Phi(r)$ lies above $\Phi(s_\ell)$ if
$\eps>0$ is sufficiently small, analogously to \eqref{eq:z1}--\eqref{eq:z3} and \eqref{eq:cx},
except that $s_\ell$ is now the convex combination of three grid points $p$, $q$, and $r$.
Consequently, $\Phi(s_\ell)$ cannot be on the upper surface.
This proves Claim~\ref{cl:3}.

By Claim~\ref{cl:2}, $\conv(\Phi^{xy}(S_a))$ has $\O(\log m)$ vertices for all $a\in \{1,2,\ldots , b\}$.
Summation over all $a=1,\ldots, b$ shows that there are $\sum_{a=1}^b \O(\log m) = \O(\log^2 m)$ vertices
on the upper surface of $\conv(\Phi(S)$.
Analogously, the lower surface of $\conv(\Phi(S))$ also has $\O(\log^2 m)$ vertices.
Overall, $\conv(\Phi(S))$ has $\O(\log^2 m)$ vertices.
This completes the proof of Lemma~\ref{lem:edge}.
\end{proof}

\begin{lemma}\label{lem:face}
 Let $S \subset G$ be a set of coplanar points. Then $\conv(\Phi(S))$ has $\O(m^{1/3})$ vertices,
where $m= |G \cap \conv(S)|$.
\end{lemma}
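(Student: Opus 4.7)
\smallskip\noindent\textbf{Proof plan.}
The plan is to generalize the two-step strategy of Lemma~\ref{lem:edge} (first analyze the $xy$-projection, then lift to $3$-space) from collinear to coplanar point sets, replacing the $\O(\log m)$ planar bound used there with Theorem~\ref{thm:Andrews} applied in the plane.

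Let $P$ be the plane containing $S$. The intersection $P\cap\ZZ^3$ is a rank-$2$ sublattice of $\ZZ^3$, and $\conv(S)$ is a lattice polygon in $P$ of area $\Theta(m)$; by Theorem~\ref{thm:Andrews} applied in the lattice $P\cap\ZZ^3$, the polygon $\conv(S)$ itself already has $\O(m^{1/3})$ vertices. After relabeling coordinates and handling degenerate cases where $P$ is orthogonal to a coordinate axis (exactly as at the beginning of the proof of Lemma~\ref{lem:edge}), I may assume that $P$ is not parallel to the $z$-axis, so the orthogonal projection $\pi_{xy}$ restricts to a bijection on $P\cap\ZZ^3$.

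\emph{Planar analogue of Claim~\ref{cl:1}.} First, I bound the number of vertices of the projected hull $\conv(\Phi^{xy}(S))$ by $\O(m^{1/3})$. The 2D lattice polygon $\pi_{xy}(\conv(S))$ has $\Theta(m)$ lattice points and hence $\O(m^{1/3})$ vertices by Theorem~\ref{thm:Andrews}; additional vertices of $\conv(\Phi^{xy}(S))$ can arise only as perturbations of lattice points interior to its edges. By Lemma~\ref{lem:edge} applied in the $xy$-plane, an edge with $t$ collinear lattice points contributes $\O(\log^2 t)$ extra vertices; combining this with Lemma~\ref{lem:new2} (which bounds the number of edges with more than $t$ lattice points by $\O((m/t^2)^{1/3})$) and balancing the threshold $t$ keeps the total at $\O(m^{1/3})$, with the polylogarithmic blowup absorbed by the trade-off.

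\emph{Spatial analogue of Claim~\ref{cl:3}.} Next, I lift the argument to $\RR^3$. The boundary of $\conv(\Phi(S))$ decomposes via a Jordan curve $\gamma$ (whose $xy$-projection is $\partial\conv(\Phi^{xy}(S))$) into an upper and a lower surface; vertices on $\gamma$ are already bounded by the planar step. For a vertex $\Phi(p)$ on, say, the upper surface whose $xy$-projection lies strictly inside $\conv(\Phi^{xy}(S))$, I define a nested chain $S=T_0\supset T_1\supset\cdots\supset T_b$ by successive dyadic selection on one coordinate (as in~\eqref{eq:nest}) and show, analogously to Claim~\ref{cl:3} via an inequality akin to~\eqref{eq:z1}--\eqref{eq:z3} together with~\eqref{eq:cx}, that $\Phi(p)$ must $xy$-project to a vertex of $\conv(\Phi^{xy}(T_a))$ for some~$a$, because the leading perturbation term dominates all later ones. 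Since each $T_a$ is coplanar with $m_a\le m\cdot 2^{-a}$ lattice points in $\conv(T_a)$, the planar step applied to $T_a$ yields $\O(m_a^{1/3})$ such vertices, and summing $\sum_{a\ge 0}\O(m_a^{1/3})$ over the geometrically decaying chain gives $\O(m^{1/3})$; the symmetric argument handles the lower surface.

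\emph{Main obstacle.} The trickiest part will be ensuring the nested chain actually decreases the lattice-point count $m_a$ by a constant factor at each level in the 2D coplanar setting (in Lemma~\ref{lem:edge} this was automatic because $|T_a|$ halves by construction for a collinear set). This likely requires choosing the splitting coordinate based on the orientation of $P$, or else running two nested chains (one per coordinate spanning $P$) so that the analysis reduces to a double geometric sum $\sum_{a,b}\O((m\cdot 2^{-a-b})^{1/3})=\O(m^{1/3})$. A secondary obstacle is the $\log^2 t$ blowup from Lemma~\ref{lem:edge} in the planar step, which must be absorbed using Lemma~\ref{lem:new2} with a carefully optimized threshold $t$ so that short edges contribute $\O(m^{1/3})$ in total while long edges are scarce enough that their $\log^2 m$ contributions also sum to $\O(m^{1/3})$.
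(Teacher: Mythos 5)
Your plan follows essentially the same route as the paper's proof: a dyadic bit-selection chain $S_1\supset\cdots\supset S_b$ exactly as in Lemma~\ref{lem:edge}, a claim that every vertex of $\conv(\Phi(S))$ must come from the boundary of some $\conv(S_a)$ (the paper's Claim~\ref{cl:4}), and a per-level count combining Theorem~\ref{thm:Andrews}, Lemma~\ref{lem:new2}, and Lemma~\ref{lem:edge} with a dyadic summation that yields $\O(m_a^{1/3})$ per level and $\O(m^{1/3})$ overall. The one point you flag as open---why the lattice-point count decays geometrically along the chain---is handled in the paper without any second chain or coordinate-dependent splitting: one intersects with the nested sets $M_a$ of all grid points of the plane subject to the same bit conditions, for which $|M_a|=\O(m/2^a)$.
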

\begin{proof}
We proceed similarly to the proof of Lemma~\ref{lem:edge}.
Consider the first coordinate axis (the $x$- or the $y$-axis) that is not orthogonal to $F$, and
another coordinate axis that is not parallel to $F$.
Assume that $F$ is not orthogonal to the $x$-axis (or else we would use the $y$-axis);
and it is not parallel to the $y$-axis (or else we would use the $z$-axis).
Label the points in $S\cap F$ as $\{s_1,\ldots , s_t\}$ sorted by increasing $x$-coordinates
(ties are broken arbitrarily).
Let the binary representation of the $x$-coordinate of $s\in S$ be $x(s)=\sum_{j=0}^{k-1} x_j(s) 2^j$.
We recursively define the sets
\begin{equation}\label{eq:nest+}
    S_1\supset S_2\supset \ldots \supset S_b,
\end{equation}
for some suitable $b\leq \O(\log m)$ the same way as in the proof of Lemma~\ref{lem:edge}.
Let $S_1=S$. Given a set $S_a\subset S$, for $a\geq 1$, where the points in $S_a$ do not all have the same
$x$-coordinates, we define $S_{a+1}$ as follows. Let $j(a)\geq 1$ be the smallest integer such that
$\{x_{j(a)}(s): s\in S_a\}=\{0,1\}$; and let $S_{a+1}=\{s\in S_a: x_{j(a)}(s)=1\}$.

Denote by $M=\ZZ^3\cap F$ the set of grid points in $F$. Based on the recursion above, we define a sequence of nested sets
\begin{equation}\label{eq:grids}
    M_1\supset M_2\supset \ldots \supset M_b
\end{equation}
as follows. Let $M_1=M$, and $M_{a+1}=\{s\in M_a: x_{j(a)}(s)=1\}$ for $a\in \{1,\ldots , b-1\}$.
Then $|M_a| = \O(m/2^a)$ for all $a\in \{1,\ldots , b\}$.

\begin{figure}[htbp]
\centering
\includegraphics[width=0.55\textwidth]{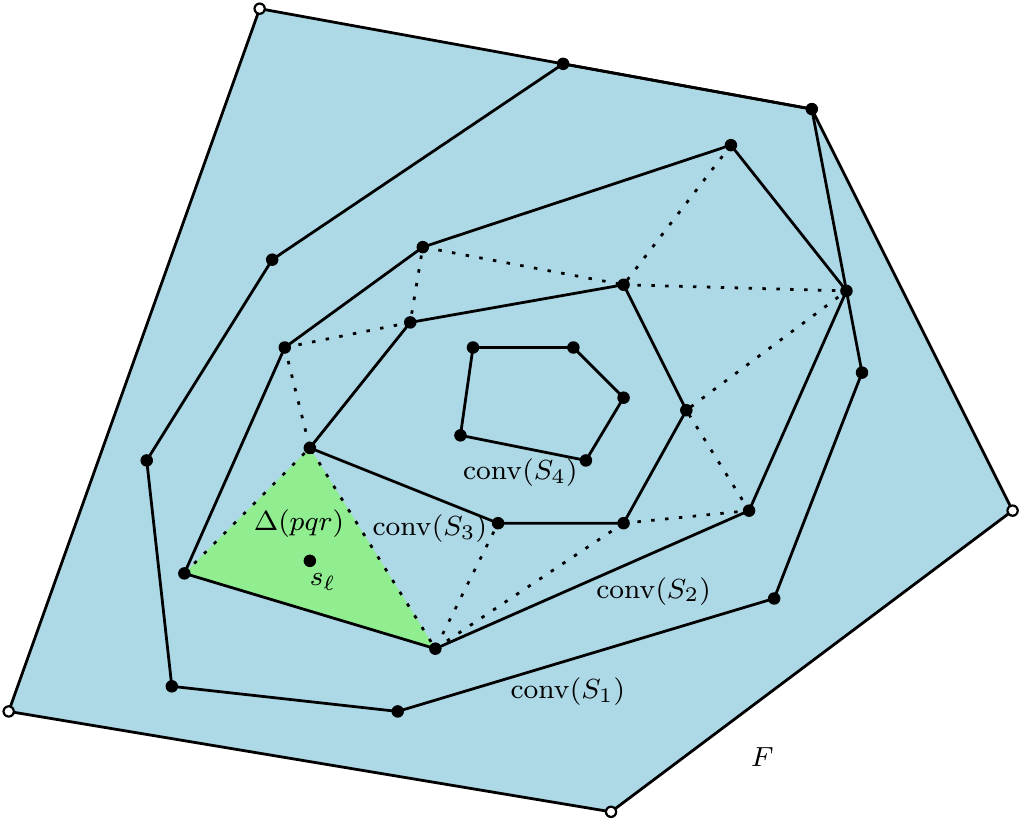}
\caption{The nested sequence $F\supset \conv(S_1)\supset \ldots \supset \conv(S_4)$.
  The point $s_\ell$ lies in $\conv(S_2)\setminus \conv(S_3)$.}
\label{fig:face}
\end{figure}

We interpret the above-below relationship with respect to the $y$-axis.
Assume w.l.o.g.\ that $\conv(S)$ lies below the face $F$.

\begin{claim}\label{cl:4}
Let $s_\ell\in S\cap F$. If $\Phi(s_\ell)$ is a vertex of $\conv(\Phi(S\cap F))$,
then $s_\ell$ lies on the boundary of $\conv(S_a)$ for some $a\in \{1,\ldots , b\}$.
\end{claim}
We prove the contrapositive of Claim~\ref{cl:4}.
Suppose that $s_\ell$ is not on the boundary of $\conv(S_a)$ for any $a\in \{1,\ldots , b\}$.
Due to \eqref{eq:nest}, there exists some $a\in \{1,\ldots , b\}$ such that
$s_\ell\in S_a$ but $s_\ell\notin S_{a+1}$; see Fig.~\ref{fig:face}.
First triangulate the convex polygon $\conv(S_{a+1})$;
and then triangulate the nonconvex polygon
$\conv(S_a)\setminus \conv(S_{a+1})$ such that each triangle
is spanned by some vertices of $\conv(S_a)$ and some vertices of $\conv(S_{a+1})$.
Then point $s_\ell$ lies in some triangle $pqr$, and at least one corner is a vertex of $\conv(S_{a+1})$.
Considering the perturbation $\varphi_{1,2}$,
the triangle $\Phi(p)\Phi(q)\Phi(r)$ lies above $\Phi(s_\ell)$ if $\eps>0$ is sufficiently small.
Consequently, $\Phi(s_\ell)$ cannot be a vertex of $\conv(\Phi(S\cap F))$.
This completes the proof of Claim~\ref{cl:4}.

For every $a\in \{1,\ldots , b\}$, the convex hull $\conv(S_a\cap M_a)$ has $\O(m_a^{1/3})$ vertices and edges,
where $m_a=|M_a|\leq \O(m/2^a)$, by Theorem~\ref{thm:Andrews}.
Further, by Lemma~\ref{lem:new2}, it has $\O((m_a/t^2)^{1/3})$ edges that contain more than $t$ points in $M_a$
for any $t\geq 1$. By Lemma~\ref{lem:edge}, if an edge $e$ of $\conv(S_a\cap M_a)$ contains more than $t$ grid points,
then $\conv(\Phi(S\cap e))$ contains $\O(\log^2 t)$ vertices of $B$.
Let $E_j$ be the set of edges $e$ with $|e\cap \ZZ^3|\in (2^{j-1}, 2^j]$;
and $E=\bigcup_{j\in \NN} E_j$. Summation over all edges of $S_a\cap F$ yields
\begin{align*}
\sum_{e\in E} \O(\log^2 |e\cap \ZZ^3|)
 &=\sum_{j\in \NN} |E_j|\cdot \O(\log^2 2^j)\\
 &=\sum_{j\in \NN}  \O\left( (m_a/2^{2j})^{1/3} \cdot j^2\right)\\
 &=\O(m_a^{1/3})\, \sum_{j\in \NN} \frac{j^2}{(2^{2/3})^j} = \O(m_a^{1/3}).
\end{align*}
Finally, summation for all $a\in \{1,\ldots ,b\}$ yields
\[
  \sum_{a=1}^b \O(m_a^{1/3})
= \sum_{a=1}^b  \O\left((m/2^a)^{1/3} \right)
= \O\left(m^{1/3}\, \sum_{a\geq 1} 2^{-a/3}\right)
= \O(m^{1/3}), \]
as claimed.
\end{proof}

\begin{lemma}\label{lem:summation}
Let $S\subset G$ and $P=\conv(S)$.
Suppose that $P$ has $f$ faces (of any dimension),
which contain $m_1,\ldots, m_f$ lattice points in their interior.
Then $\sum_{i=1}^f m_i^{1/3} =\O(n^{1/2})$.
\end{lemma}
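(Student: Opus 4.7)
The plan is to decompose the sum $\sum_{i=1}^f m_i^{1/3}$ by the dimension of the face. Vertices ($0$-faces) contribute $0$; the polytope $P$ itself, the unique $3$-face, has $m_P\le n$ and contributes $\O(n^{1/3})$, which is absorbed in $\O(n^{1/2})$. So it remains to bound the contributions of edges and $2$-dimensional facets; I will focus on the facet sum, the edge sum being handled by a parallel, simpler argument.

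The key step is a $3$-dimensional analogue of Lemma~\ref{lem:new2}: for every integer $t \ge 1$, the number of facets of $P$ whose relative interior contains more than $t^2$ lattice points of $\ZZ^3$ is $\O((n/t^3)^{1/2})$. To prove it I mimic the proof of Lemma~\ref{lem:new2}: round every relevant lattice point in $P$ to the nearest point of $t\ZZ^3$ and rescale by $1/t$, obtaining a lattice polytope of volume $\O(n/t^3)$, which by Theorem~\ref{thm:Andrews+} has $\O((n/t^3)^{1/2})$ faces; distinct facets of $P$ with $> t^2$ interior lattice points produce distinct faces of the rescaled polytope. With this refined count in hand, group facets dyadically as $\mathcal F_j = \{F : m_F \in [2^j,2^{j+1})\}$ and set $t=2^{\lceil j/2\rceil}$ to get $|\mathcal F_j| = \O(n^{1/2}/2^{3j/4})$. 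Then
\[
\sum_F m_F^{1/3} = \sum_{j\ge 0} |\mathcal F_j|\cdot \O(2^{j/3})
\le \O(n^{1/2}) \sum_{j\ge 0} 2^{-5j/12} = \O(n^{1/2}),
\]
since the geometric series converges. An analogous dyadic decomposition of edges, applying Lemma~\ref{lem:new2} inside each of the two facets incident to every edge of $P$, bounds $\sum_e m_e^{1/3}$ by $\O(n^{1/2})$ as well.

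The main obstacle is the distinctness of rounded facets in the $3$D analogue: I must verify that no two facets of $P$, each carrying more than $t^2$ interior lattice points, collapse onto the same $2$-face of the coarsened polytope. A naive Hölder argument using only the surface-area budget $\sum_F A_F=\O(n^{2/3})$ and the facet count $\O(n^{1/2})$ yields only $\O(n^{5/9})$, so the refined dyadic count really is needed. The required injectivity should follow from the fact that a facet with more than $t^2$ interior $\ZZ^3$-points has lattice area at least $t^2$ in its $2$D sublattice, which is sufficient to survive rescaling as a non-degenerate $2$-face after rounding.
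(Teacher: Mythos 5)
There is a genuine gap, and it sits exactly where you flag it: the ``$3$-dimensional analogue of Lemma~\ref{lem:new2}'' proved by rounding to $t\,\ZZ^3$. Your justification --- that a facet with more than $t^2$ interior lattice points has lattice area at least $t^2$ ``which is sufficient to survive rescaling as a non-degenerate $2$-face after rounding'' --- is false. Large lattice area does not force large lattice width: a facet of lattice width $O(1)$ in its plane's sublattice but lattice diameter $\Omega(t^2)$ (a long thin facet, e.g.\ a side facet of a prism-like polytope) has area $\geq t^2$ by Pick's theorem, yet after moving its vertices by distances up to $\Theta(t)$ it can degenerate to a segment, and several parallel thin facets can collapse onto the same object; moreover nothing guarantees that the rounded image of a facet is a face of the convex hull of the rounded set at all. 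The $2$-dimensional proof of Lemma~\ref{lem:new2} avoids these issues because it works along an $x$-monotone boundary chain and truncates each primitive edge vector, a structure with no direct analogue for the $2$-dimensional boundary surface of a $3$-polytope. Indeed, the paper explicitly leaves this kind of rounding-based generalization of Lemma~\ref{lem:new2} open (Conjecture~\ref{con:1}); it is not something one can cite or reproduce by mimicking the planar proof.

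The paper proves Lemma~\ref{lem:summation} by a different route: facets are grouped by the Euclidean norm of their primitive normal vector $\mathbf{n}_F$, so that a facet in the class $\|\mathbf{n}_F\|\in(2^{j-1},2^j]$ contains only $\O(\area(F)/2^j)$ lattice points; the class sizes are bounded both by the number $\O(2^{3j})$ of primitive integer vectors of norm at most $2^j$ (via the sum-of-three-squares function) and by the total facet count $\O(n^{1/2})$, and Jensen's inequality is applied within each class, with the two bounds balanced at $2^j=n^{1/6}$. It is worth noting that the intermediate counting statement your outer argument needs --- that the number of facets with more than $s$ interior lattice points is $\O(n^{1/2}/s^{3/4})$ --- is in fact true and follows from that normal-vector technique (such a facet has area $\Omega(s\|\mathbf{n}_F\|)$, and one combines the area budget $\O(n^{2/3})$ with the $\O(2^{3j})$ count of primitive normals and optimizes over $j$). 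So your dyadic decomposition by $m_F$ and the concluding geometric-series computation are salvageable, but not with the rounding proof you propose; that step needs to be replaced by an argument of the above type. (Your treatment of edges is also only sketched, but that is a minor point compared with the facet count.)
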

\begin{proof}
  The surface area of the lattice polytope $P=\conv(S)$ is bounded above by that of the cube $\conv(G)$,
  which is $6n^{2/3}$. Let $\mathcal{F}$ be the set of all facets of $P$. Then summation of the area over all facets yields
  an upper bound $\sum_{F\in \mathcal{F}}\O(\area(F)) = \O(n^{2/3})$.

  For every face $F\in \mathcal{F}$, let $\mathbf{n}_F=(a,b,c)$ be the integer normal vector of the plane spanned by $F$,
  where $a,b,c\in \ZZ$, and $\mathrm{gcd}(a,b,c)=1$. It is known that $F$ contains $\O(\area(F)/\|\mathbf{n}\|_2)$
  lattice points~\cite{And61}.

  For every integer $j\in \NN$, let $\mathcal{F}_j$ be the set of facets $F\in \mathcal{F}$
  such that $\|\mathbf{n}_F\|_2\in (2^{j-1}, 2^j]$.
  In particular, a face $F\in \mathcal{F}_j$ contains $\O(\area(F)/2^j)$ lattice points.
  For an integer $N\in \NN$, let $r_3(N)$ denote the number of representations of $N$ as a sum of squares
  of three integers (where signs and the order of terms matters).
  It is known~\cite[Theorem~340]{HW79} that
\[ \sum_{N=1}^{M} r_3(N) = \frac43 \pi M^{3/2}+\O(M). \]
  In particular, for $M=2^{2j}$, we have $\sum_{N=1}^M r_3(N) =\Theta(2^{3j})$,
  and so $|\mathcal{F}_j| = \O(2^{3j})$.

The total area of all facets in $\mathcal{F}_j$ is $\O(n^{2/3})$ for each $j\in \NN$.
Thus the facets in $\mathcal{F}_j$ contain at most
$\sum_{F\in \mathcal{F}_j} \O(\area(F)/2^j) = \O(n^{2/3}/2^j)$ lattice points.
Jensen's inequality gives
\begin{align}\label{eq:Jensen}
 \sum_{F\in \mathcal{F}_j} m_i^{1/3}\log m_i
 &\leq |\mathcal{F}_j| \cdot \left(\frac{n^{2/3}/2^j}{|\mathcal{F}_j|}\right)^{1/3} \nonumber\\
 &= |\mathcal{F}_j|^{2/3} n^{2/9} 2^{-j/3}
 = \O(n^{2/9} 2^{(2/3)3j-j/3})
 = \O(n^{2/9} 2^{5j/3}).
\end{align}
Since the number of facets is $\O(n^{1/2})$, then $|\mathcal{F}_j| =\O(n^{1/2})$ for all $j\in \NN$;
and  \eqref{eq:Jensen} becomes:
\begin{align}\label{eq:Jensen2}
 \sum_{F\in \mathcal{F}_j} m_i^{1/3}
 &\leq |\mathcal{F}_j| \cdot \left(\frac{n^{2/3}/2^j}{|\mathcal{F}_j|}\right)^{1/3} \nonumber\\
 &\leq |\mathcal{F}_j|^{2/3} n^{2/9} 2^{-j/3}
 = \big( \O(n^{1/2}) \big)^{2/3} n^{2/9} 2^{-j/3}
 = \O(n^{5/9} 2^{-j/3}) .
\end{align}

The bounds in \eqref{eq:Jensen} and \eqref{eq:Jensen2} are equal, up to constant factors,
when $2^j=n^{1/6}$, or equivalently, $j=\frac16 \log n$.
Summation over all $j\in \NN$ yields
\begin{align*}
 \sum_{i=1}^f m_i^{1/3}
 &=\sum_{j\in \NN} \sum_{F\in \mathcal{F}_j} m_i^{1/3}\\
 &=\sum_{j=1}^{\log n^{1/6}} \sum_{F\in \mathcal{F}_j} m_i^{1/3}
    + \sum_{j> \log n^{1/6}} \sum_{F\in \mathcal{F}_j} m_i^{1/3}\\
 &\leq \sum_{j=1}^{\log n^{1/6}}  \O(n^{2/9} 2^{5j/3}) +  \sum_{j> \log n^{1/6}} \O(n^{5/9}/ 2^{j/3})\\
 &= \O(n^{2/9+5/18})+ \O(n^{5/9-1/18}) = \O(n^{1/2}).   \qedhere
\end{align*}
\end{proof}

We can now complete the proof of Theorem~\ref{thm:upper}.
Recall that $G=\{0,\ldots, 2^k-1\}^3$ is a section
of the integer lattice $\ZZ^3$ with $n=2^{3k}$ points.
We have $A=\Phi(G)$; and $C\subset G$ such that $B=\Phi(C)$ is in convex position.
As noted above, every point in $C$ lies on the boundary of
the lattice polytope $P=\conv(C)$.

If a facet $F$ of $P$ contains $m$ lattice points, then $F$ contains
$\O(m^{1/3})$ points of $C$ by Lemma~\ref{lem:face}.
Summation over all facets yields $\O(n^{1/2})$ by Lemma~\ref{lem:summation}.
Consequently, we have $|B|=|C| = \O(n^{1/2})$, as required.

\section{Approximation algorithm: Proof of Theorem~\ref{thm:approx}}

In this section we analyze the randomized algorithm described in Section~\ref{sec:lower} and show
that its approximation factor is $\Omega(n^{-1/6})$. We also make some small twists that allow for
an efficient implementation.
We are given an $n$-element set $A\subset \RR^3$ with $\Delta(A)\leq \O(n^{1/3})$
As in the proof of Theorem~\ref{thm:lower}, we may assume that $A \subset B$,
where $B$ is a ball of radius $R\leq \O(n^{1/3})$ centered at $O$; e.g., a smallest enclosing ball of $A$.
First, deterministically construct a (single) spherical packing $\P$ on
a sphere $C$ congruent to $B$ of prescribed minimum size $\Theta(n^{1/2})$;
($\P$ does not have to be maximal).
For example, slice the sphere by latitudes, and choose equally spaced spherical caps between
consecutive latitudes. Second, the randomized phase proceeds as follows.
Guess a center for a ball $C$ and apply a random rotation in $SO(3)$
(applying the same rotation to all caps in $\P$), and then test whether
a constant fraction of the caps are nonempty; if not, repeat the process.
The expected number of repetitions is bounded by a constant (the sum of a geometric series).

\smallskip\noindent\textbf{Approximation ratio.}
As in the proof of Theorem~\ref{thm:lower}, we may assume that $A \subset B$, where $B$ is a ball of radius
$R= \alpha n^{1/3}$ centered at $O$.
Let $A_{\opt} \subset A$ be a maximum-size subset in convex position, \ie, $\opt=|A_{\opt}|$.
Since $A_{\opt} \subset B$, we have
\[ \area(\partial(\conv(A_{\opt}))) \leq \area(\partial B)= 4\pi R^2 = \O(n^{2/3}). \]
Since $A$ is density-restricted, then $\opt=\O(n^{2/3})$.
(The above argument is the $3$-dimensional variant of~\cite[Theorem~3.2]{AKP89}.)
On the other hand, Theorem~\ref{thm:lower} yields
$\alg = \Omega(n^{1/2})$. Consequently, the approximation ratio is
\[ \frac{\alg}{\opt}= \Omega \left( \frac{n^{1/2}}{n^{2/3}} \right) = \Omega(n^{-1/6}), \]
as claimed.

\smallskip\noindent\textbf{Running time analysis.}
A smallest enclosing ball of $n$ points in $\RR^3$ can be computed in $\O(n)$ expected time~\cite{Wel91},
and a random rotation and translation in $\O(1)$ time~\cite{Sho92}.
The packing $\P$ can be constructed in $\O(n^{1/2})$ time (\ie, in time linear in $|\P|$).
We next consider the time complexity of $\Theta(n^{1/2})$ range-emptiness queries for spherical cap ranges.
After $\O(n)$ preprocessing, points outside the chosen ball are excluded from further consideration.
Then each range-emptiness query for a spherical cap is equivalent to (and answered by) a
\emph{halfspace emptiness query} determined by the plane containing the base of the cap.
(Here we take advantage of the fact that all spherical cap ranges pertain to the same ball.)
After $\O(n \log{n})$ expected preprocessing time,
such queries in $3$-space can be answered in $\O(\log{n})$ time per query using the algorithm
by Afshani and Chan~\cite{AC09}; but also by other algorithms, see~\cite{Aga17a,Aga17b}.
Consequently, the range-emptiness queries can be answered in  $\O(n^{1/2} \log{n})$ time.
Adding up the running times of the steps we have
$\O(n^{1/2}) + \O(n) + \O(n \log{n}) + \O(n^{1/2} \log{n}) =\O(n \log{n})$.
Overall, the randomized algorithm runs in $\O(n \log{n})$ expected time.
\qed

\subparagraph{Generalization to higher dimensions.}
The machinery developed here generalizes to $\mathbb{R}^d$.

\begin{theorem} \label{thm:approx-d}
Given any $n$-element point set in $\RR^d$ in general position, satisfying $\Delta(A) \leq \alpha n^{1/d}$
for a fixed $\alpha$, a $\Omega \left( n^{-\frac{(d-1)}{d(d+1)}} \right)$-factor approximation of the maximum size
convex subset of points can be computed by a randomized algorithm in $\O(n \log{n})$ expected time.
\end{theorem}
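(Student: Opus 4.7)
The plan is to follow the blueprint of Theorem~\ref{thm:approx}: extend the probabilistic construction of Section~\ref{sec:lower} to $\RR^d$, bound $\opt$ by a surface-area argument, and verify that the randomized routine runs in $\O_d(n\log n)$ expected time.

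First I would generalize Theorem~\ref{thm:lower} to $\RR^d$ (this is exactly the $\gamma_d(n)=\Omega(n^{(d-1)/(d+1)})$ bound that Bukh and Dong also established). Embed $A$ in a ball $B$ of radius $R=\alpha n^{1/d}$ and construct a maximal packing $\P$ of spherical caps on $\partial B$ of height $h$ and base radius $r=\sqrt{2Rh-h^2}$. Choose $h$ so that each cap has volume $v_0=\Theta_d(r^{d-1}h)=\Theta_d(R^{(d-1)/2}h^{(d+1)/2})$ equal to a dimensionless constant; solving forces $h=\Theta_d(n^{-(d-1)/(d(d+1))})$. A surface-area computation analogous to Lemma~\ref{lem:packing} then yields
\[
|\P|=\Theta_d\!\left(\frac{R^{d-1}}{(Rh)^{(d-1)/2}}\right)=\Theta_d\!\left((R/h)^{(d-1)/2}\right)=\Theta_d\bigl(n^{(d-1)/(d+1)}\bigr),
\]
and Lemma~\ref{lem:convex} carries over verbatim. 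The shell bound becomes $|A(j)|=\O_d(j^{d-1})$ by the natural higher-dimensional cell decomposition in Lemma~\ref{lem:shell}; the locus Lemma~\ref{lem:locus} yields a spherical zone on $\mathbb{S}^{d-1}$ of angular width $\Theta(h/\dist(a,a'))$, hence of surface measure $\Theta_d(h/\dist(a,a'))$; and the $d$-dimensional form of Lemma~\ref{lem:mu} becomes $\mu(\S(a))=\Theta_d(v_0)$ together with
\[
\sum_{a'\in A}\mu(\S(a)\cap\S(a'))\leq v_0\Bigl(1+\O_d\Bigl(h\sum_{j=1}^{\lfloor 2r\rfloor}j^{d-2}\Bigr)\Bigr)=v_0\bigl(1+\O_d(hr^{d-1})\bigr)=\O_d(v_0),
\]
where the last equality uses $hr^{d-1}=\Theta_d(v_0)=\Theta_d(1)$. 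Substituting these into the Cauchy--Schwarz argument of Lemma~\ref{lem:key} shows that a random rotation-plus-translation places a constant fraction of $\P$ over points of $A$ in expectation, giving $\alg=\Omega_d(n^{(d-1)/(d+1)})$.

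For the upper bound on $\opt$, any $A_{\opt}\subset A$ in convex position lies on $\partial\conv(A_{\opt})\subset B$; since pairwise distances are at least $1$, an injection into disjoint $(d-1)$-dimensional regions on this boundary (the $d$-dimensional analogue of~\cite[Theorem~3.2]{AKP89}) gives $\opt=\O_d(R^{d-1})=\O(n^{(d-1)/d})$. Dividing produces the stated ratio
\[
\frac{\alg}{\opt}=\Omega\!\left(\frac{n^{(d-1)/(d+1)}}{n^{(d-1)/d}}\right)=\Omega\bigl(n^{-(d-1)/(d(d+1))}\bigr).
\]
For the running time in general $d$, rather than depending on halfspace-emptiness structures (whose query cost degrades with $d$), I would index the deterministic packing $\P$ by a latitude-based hierarchy that supports $\O_d(\log n)$ point location; a single pass over $A$ then assigns each point to its containing cap and marks nonempty caps in $\O_d(n\log n)$ total time. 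Combined with the $\O(n)$ expected-time minimum-enclosing-ball step~\cite{Wel91}, the construction of $\P$ in $\O_d(|\P|)$ time, and $\O(1)$ expected random congruences (by a Markov-style argument on the count of nonempty caps), the overall expected running time is $\O_d(n\log n)$.

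The principal obstacle is verifying the numerical balance in the $d$-dimensional analogue of Lemma~\ref{lem:mu}(ii): the sum $h\sum_{j=1}^{\O(r)}j^{d-2}=\Theta_d(hr^{d-1})$ must evaluate to $\Theta_d(1)$, which is exactly what pins down the exponent $(d-1)/(d(d+1))$ and hence fixes $|\P|$ and the final approximation factor. A secondary difficulty is engineering the deterministic packing $\P$ together with an $\O_d(\log n)$-time point-location data structure, so that the time analysis generalizes cleanly without invoking a halfspace-emptiness subroutine whose complexity worsens with $d$.
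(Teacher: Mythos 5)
Your proposal is correct, and for the approximation ratio it coincides with the paper's (very terse) argument: generalize the cap-packing lower bound of Section~\ref{sec:lower} to $\RR^d$ with $R=\alpha n^{1/d}$, $h=\Theta_d(n^{-(d-1)/(d(d+1))})$, $r=\Theta_d(\sqrt{Rh})$, so that $|\P|=\Theta_d(n^{(d-1)/(d+1)})$ and the Cauchy--Schwarz computation of Lemma~\ref{lem:key} goes through with $|A(j)|=\O_d(j^{d-1})$ and $hr^{d-1}=\Theta_d(1)$, while $\opt=\O_d(R^{d-1})=\O(n^{(d-1)/d})$ by the surface-area/packing argument; dividing gives exactly the stated factor, which is all the paper writes down for Theorem~\ref{thm:approx-d}. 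Where you genuinely depart from the paper is the running time: the paper's $d=3$ proof answers $\Theta(n^{1/2})$ cap-emptiness queries via halfspace-emptiness structures (Afshani--Chan) and then simply declares the $d$-dimensional case ``analogous,'' whereas you invert the roles and bucket each point of $A$ into its (unique, since the solid caps of a packing on a common sphere have disjoint interiors) containing cap by $\O_d(\log n)$ point location in a latitude-based hierarchy on $\mathbb{S}^{d-1}$. This is a sound and in fact advisable modification: halfspace-emptiness query times degrade with $d$ (near-linear-space structures answer queries in roughly $n^{1-1/\lfloor d/2\rfloor}$ time), so the paper's $d=3$ mechanism does not transparently stay within $\O(n\log n)$ for $d\geq 4$, while your single-pass point-location scheme does, at the cost of having to organize the deterministic packing hierarchically (which your recursive latitude construction supports); the constant expected number of repetitions follows, as you say, from a reverse-Markov bound since the number of nonempty caps is at most $|\P|$ and has expectation $\Omega(|\P|)$.
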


The proof of Theorem~\ref{thm:approx-d} is analogous to the proof of Theorem~\ref{thm:approx}.
The approximation ratio is
\[ \frac{\alg}{\opt}= \Omega \left( \frac{n^{(d-1)/(d+1)}}{n^{(d-1)/d}} \right) =
\Omega \left( n^{-\frac{(d-1)}{d(d+1)}} \right). \]
As the exponent tends to zero when $d \to \infty$, the approximation ratio improves with the dimension
(if $n$ is sufficiently large). As such, our algorithm enjoys the `blessing of dimensionality' rather than
the usual `curse of dimensionality'.

\section{Concluding remarks} \label{sec:conclusion}

Conlon and Lim~\cite{CL21} raised the question of whether the extension of the Valtr grid to higher dimensions
presented in their paper bears any influence on the problem of constructing density-restricted sets with no large convex
subsets in higher dimensions. Here we gave a positive answer and a tight asymptotic bound for $d=3$.
We also obtained the first approximation algorithm for the problem of finding a maximum-size subset of points
in convex position in a density-restricted set in $\RR^3$.
Next, we list a few open questions regarding the remaining gaps and the quality of approximation.

\begin{enumerate}  \itemsep 2pt

\item Is the problem of finding a maximum-cardinality subset in convex position,
  in given finite set in $\RR^3$, still \textrm{NP}-complete for density-restricted sets?

\item Is there a constant-ratio approximation algorithm
  for finding a maximum-size subset in convex position for a given finite set in $\RR^3$?
  Is there one for density-restricted sets?

\end{enumerate}

\later{
If we perturb an $n$-element section of the integer lattice $\ZZ^d$, then we obtain a set of $n$ points in $\RR^d$
with spread $\Omega(n^{1/d})$. However, a small perturbation may substantially increase the \emph{bit-complexity}
of the point set. What is the \emph{minimum} bit-complexity that can accommodate such a perturbation?

\begin{enumerate}  \itemsep 2pt \setcounter{enumi}{2}
\item In constant dimensions $d\in \NN$, what is the smallest $\kappa(d)>0$ such that for every $n\in \NN$
  there exists a set $A$ of $n$ points in an $[n^{\kappa(d)}]^d$ section of the integer lattice $\ZZ^d$
  such that any subset of $A$ in convex position contains at most $\O(n^{(d-1)/(d+1)} \text{polylog}(n)) $ points?
\end{enumerate}
} 

Next are several open questions regarding the size of the largest convex subset
in point sets where the density constraints are relaxed. Let $A$ be a set of $n$ points in general position
in $\RR^d$ satisfying $\Delta(A) \leq \alpha n^\tau$, where $d \geq 2$ and $\alpha,\tau>0$ are constants.
Note that for $\tau \geq 1$, only poly-logarithmic bounds are in effect~\cite{KV03}.

\begin{enumerate}  \itemsep 2pt \setcounter{enumi}{2}

\item Let $d=2$.
  What upper bounds on the size of the largest convex subset can be derived when
$\tau \in \left(\frac12,1\right)$?
 What lower bounds can be derived when $\tau \in \left[\frac34,1\right)$?

 \item  Let $d=3$.
  What upper bounds on the size of the largest convex subset can be derived when
$\tau \in \left(\frac13,1\right)$?
 What lower bounds can be derived when $\tau \in \left[\frac23,1\right)$?

\end{enumerate}

A natural candidate for a lower bound in the third question   
is a suitable perturbation---in the form of the Valtr grid---of a rectangular section of the integer lattice.
Indeed, if $A$ is a $n^\tau \times n^{1-\tau}$ section of this lattice and $\tau>1/2$,
then $\Delta(A) =\O(n^\tau)$.

We conclude  with the following conjecture that generalizes Lemma~\ref{lem:new2}:

\begin{conjecture}\label{con:1}
For every finite set $S\subset \ZZ^d$, $d\geq 2$, and for every integer $t$, $1\leq t\leq V^{1/d}$,
the lattice polytope $\conv(S)$  has $\O\left( \left(V/t^d\right)^{\frac{d-1}{d+1}}\right)$
faces (of any dimension) that contain more than $t$ points in $\ZZ^d$, where $V= \vol(\conv(S))>0$.
\end{conjecture}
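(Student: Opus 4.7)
The plan is to generalize the proof of Lemma~\ref{lem:new2} from dimension 2 to arbitrary $d$. In the planar case, each long edge of the boundary of $\conv(S)$ is replaced by a shorter vector whose length is a multiple of $t$ along its primitive direction; the sequence of shrunken vectors, assembled along the two monotone arcs of the boundary, forms a lattice polygon in $t\ZZ^2$. Rescaling by $1/t$ produces a lattice polygon in $\ZZ^2$ of area at most $A/t^2$, whose vertex count (by Theorem~\ref{thm:Andrews}) is $\O((A/t^2)^{1/3})$, and this bounds the number of long edges.

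For general $d$, the goal is to construct a lattice polytope $Q\subset \ZZ^d$ with $\vol(Q)=\O(V/t^d)$ that has at least as many faces (of any dimension) as $\conv(S)$ has faces containing more than $t$ lattice points. Theorem~\ref{thm:Andrews+} applied to $Q$ would then immediately yield the conjectured bound $\O((V/t^d)^{(d-1)/(d+1)})$. To build $Q$, I would associate to each long face $F$ of $\conv(S)$ a ``shrunken'' image in the sublattice $t\ZZ^d$: identify the sublattice induced by $\ZZ^d$ on the affine span of $F$, and compress $F$ by a factor of roughly $t$ along a primitive basis of this sublattice. These shrunken pieces should be assembled into the boundary of a convex polytope $Q' \subset t\ZZ^d$, so that $Q:=Q'/t$ lies in $\ZZ^d$ and satisfies $\vol(Q)\leq V/t^d$.

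The main obstacle is that for $d\geq 3$, the shrinking operations applied to different long faces must be globally consistent to yield a convex polytope. Unlike 2D, where the boundary is a sequential chain of edges and the shrunken concatenation is automatically a polygon, in higher dimensions the boundary is a $(d-1)$-dimensional cell complex whose faces of various dimensions interact nontrivially: a long facet may have short bounding edges, or vice versa, so per-face shrinking cannot be done independently. A plausible alternative route is induction on $d$: apply the conjecture inductively in dimension $d-1$ to each facet of $\conv(S)$ (which is itself a lattice polytope in the $(d-1)$-dimensional sublattice $\ZZ^d\cap \mathrm{aff}(F)$) to count long faces of dimension at most $d-2$ contained in that facet, then combine the per-facet bounds using Jensen-type inequalities analogous to those in the proof of Lemma~\ref{lem:summation}; the codimension-$1$ case (long facets themselves) would be handled separately, exploiting the fact that each primitive facet normal can be realized by only boundedly many parallel facets.
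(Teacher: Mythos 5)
You should first be aware that the paper does not prove this statement: it is stated as Conjecture~\ref{con:1} in the concluding remarks, explicitly left open, with the remark that it would let the upper-bound argument of Section~\ref{sec:upper} generalize to $d\geq 4$. So there is no proof in the paper to compare against, and the only question is whether your argument closes the conjecture. It does not. Your first route halts precisely at the point you yourself flag as the main obstacle, and that obstacle is the whole difficulty. In the plane, the proof of Lemma~\ref{lem:new2} works because the lower (or upper) arc is a linearly ordered chain of edge vectors: concatenating the shrunken vectors $m_i'\vec{e}_i$ automatically produces a set $S_1'\subset t\,\ZZ^2$ whose convex hull is a lattice polygon of area at most $A$, with no compatibility conditions to verify. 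In $d\geq 3$ there is no such ordering: reconstructing a convex polytope from prescribed facet data requires a global closing condition (in Minkowski's sense, the facet normals weighted by facet volumes must sum to zero), and shrinking each long face independently destroys it; moreover, even if some convex body with the shrunken facets existed, nothing forces it to be a lattice polytope in $t\,\ZZ^d$, which is exactly what the rescaling step and the appeal to Theorem~\ref{thm:Andrews+} require. Without the polytope $Q$ the entire argument produces nothing.

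The fallback induction is also only a sketch, and its missing steps are substantive. First, applying the conjecture inductively to a facet $F$ means working in the lattice $\ZZ^d\cap\mathrm{aff}(F)$, whose normalized volume is $\area(F)/\|\mathbf{n}_F\|$ rather than $\area(F)$; you would have to carry this normalization through the summation, and it is not checked that a Jensen-type combination as in Lemma~\ref{lem:summation} of the per-facet bounds $\O\bigl((\area(F)/(\|\mathbf{n}_F\| t^{d-1}))^{(d-2)/d}\bigr)$ yields the target exponent $\frac{d-1}{d+1}$ against $V/t^d$; the exponents do not match up by inspection. Second, the codimension-one case is the heart of the matter, and your proposed handle is vacuous: for a convex polytope each outer normal direction is realized by at most one facet anyway, so ``boundedly many parallel facets'' gives no counting power. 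What is actually needed is a bound on the number of primitive normals $\mathbf{n}$ admitting a facet with more than $t$ lattice points, i.e.\ with $\area(F)=\Omega(t\|\mathbf{n}\|)$, under the constraint that the total surface area is $\O(V^{(d-1)/d})$, and it is neither carried out nor clear that this gives $\O\bigl((V/t^d)^{(d-1)/(d+1)}\bigr)$. As it stands, your proposal is a plan with the decisive steps left open, which is consistent with the statement remaining a conjecture in the paper.
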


Using this technical tool, our upper bound $\gamma_3(n) =\O(n^{1/2})$ in Section~\ref{sec:upper}
would generalize to higher dimensions. Together with the direct generalization
of our lower bound $\gamma_3(n) = \Omega(n^{1/2})$ in Section~\ref{sec:lower},
it would yield an alternative proof of the bound $\gamma_d(n)=\Theta_d \left(n^{\frac{d-1}{d+1}}\right)$ for $d\geq 4$, obtained by Bukh and Dong~\cite{BD22} independently.

\end{document}